\documentclass[11pt, reqno]{amsart}
\usepackage[utf8]{inputenc}
\usepackage{amssymb}
\usepackage{mathtools}
\usepackage[hyperfootnotes=false]{hyperref}
\usepackage{cleveref}
\usepackage[multiple]{footmisc}
\usepackage[a4paper, total={6.3in, 9.5in}, centering]{geometry}

\newcommand{\N}{\mathbb{N}}
\newcommand{\Z}{\mathbb{Z}}

\newcommand{\R}{\mathbb{R}}

\newcommand{\lcm}{\mathrm{lcm}}
\newcommand{\ord}{\mathrm{ord}}
\newcommand{\wt}{\widetilde}

\renewcommand{\P}{\mathbb{P}}

\DeclareMathOperator*{\E}{\mathbb{E}}
\allowdisplaybreaks

\theoremstyle{plain}
\newtheorem{theorem}{Theorem}
\newtheorem{lemma}[theorem]{Lemma}
\newtheorem{proposition}[theorem]{Proposition}
\newtheorem{corollary}[theorem]{Corollary}

\newtheorem{fact}[theorem]{Fact}

\theoremstyle{definition}

\newtheorem{remark}[theorem]{Remark}

\numberwithin{theorem}{section}

\begin{document}

\title{The Rényi entropy of the order of a random permutation}
\author{Adrian Beker}
\address{Department of Mathematics, Faculty of Science, University of Zagreb, Bijenička cesta 30, 10000 Zagreb, Croatia}
\email{adrian.beker@math.hr}

\begin{abstract}
    We study the distribution of the order of a random permutation of $[n]$ through the lens of Rényi entropy. In particular, we obtain an asymptotic for the Rényi $q$-entropy of the order in the full range $1 \leq q \leq \infty$. For $q > 1$, our results are quantitatively optimal and reveal a tight connection between the asymptotic behaviour of the Rényi $q$-entropy and arithmetic properties of $n$. Of particular interest are the cases $q = \infty$ and $q = 2$, which correspond to the maximum probability of achieving a particular order and the probability that two independent random permutations have equal orders, respectively. In the former case, we show that the probability in question is asymptotic to $1/n$ and additionally characterise the maximiser for sufficiently large $n$. In the latter case, we determine a minimal and maximal order for the probability as a function of $n$, of respective forms $c/n^2$ and $\log^*n/n^2$. Our results provide an essentially complete answer to a set of questions raised by Acan, Burnette, Eberhard, Schmutz and Thomas, some of which go back to work of Erdős and Turán from the 1960s.
\end{abstract}

\maketitle

\section{Introduction}\label{sec:intro}

\subsection{History and overview}\label{subsec:history}

Let $\pi_n$ be a permutation chosen uniformly at random from $S_n$, the symmetric group on $n$ letters. Let $\ord(\pi_n)$ denote the order of $\pi_n$, i.e.\ the least common multiple of the lengths of its cycles. Understanding the distribution $\ord(\pi_n)$ is a fundamental problem in probabilistic group theory. Its study has a rich history, which goes back more than a hundred years to the work of Landau \cite{landau}. In this work, Landau showed that the maximum value of the order is $e^{(1+o(1))\sqrt{n\log n}}$. Later on, a systematic investigation of the statistical properties of $\ord(\pi_n)$ was undertaken by Erdős and Turán. In a series of works \cite{erdos-turan-groups-i, erdos-turan-groups-ii, erdos-turan-groups-iii, erdos-turan-groups-iv}, they established a number of results concerning the distribution of $\ord(\pi_n)$. We highlight the celebrated Erdős--Turán law \cite{erdos-turan-groups-iii}, which states that $\log\ord(\pi_n)$ is asymptotically normally distributed with mean $\frac{1}{2}\log^2n$ and variance $\frac{1}{3}\log^3n$. The study of orders of random permutations and their cycle type has since led to fruitful developments at the interface of probability and number theory. For example, on the probabilistic side, there has been interest in rates of convergence/large deviation estimates for the Erdős--Turán law, as well as extensions to non-uniform distributions \cite{nicolas, barbour-tavare, storm-zeindler}. On the number-theoretic side, there are striking analogies between the cycle structure of a random permutation and the prime factorisation of a typical integer. This is the subject of the \emph{anatomy of integers and permutations}; the interested reader is invited to consult \cite{granville, ford-anatomy}. For a more complete account of the literature in the area, we recommend \cite[\S1]{ford}.

While the macroscopic behaviour of $\ord(\pi_n)$ is by now fairly well understood, obtaining local limit results has proved more challenging. This is primarily due to the fact that point probabilities 
\[
    p_n(m) \vcentcolon= \P(\ord(\pi_n) = m)
\]
depend sensitively on arithmetic properties of $m$ and $n$. Indeed, permutations $\pi \in S_n$ of order $m$ correspond via their cycle type to representations of $n$ as a sum of divisors of $m$. If $m$ is fixed, the problem of counting objects of this kind can be successfully addressed using methods of analytic combinatorics (see \cite{wilf}). However, when $m$ is allowed to grow with $n$, the interaction between additive and multiplicative structure makes it difficult to estimate $p_n(m)$. In this direction, Acan, Burnette, Eberhard, Schmutz and Thomas \cite{acan-burnette-eberhard-schmutz-thomas} recently studied the collision probability for $\ord(\pi_n)$. Letting $\pi_n'$ be an independent copy of $\pi_n$, they were interested in estimating the probability that $\pi_n$ and $\pi_n'$ have equal orders:
\[
    P_2(n) \vcentcolon= \P(\ord(\pi_n) = \ord(\pi_n')).
\]
Note that this probability can be expressed in terms of the $\ell^2$-norm of $p_n$:
\[
    P_2(n) = \sum_{m\in\N}p_n(m)^2 = \lVert p_n\rVert_2^2.
\]
The quantity $P_2(n)$ is closely related to the Rényi $2$-entropy, also known as the \emph{collision entropy}, of $\ord(\pi_n)$. Recall that, for a random variable $Y$ taking finitely many values and a parameter $q \in (1,\infty)$, the \emph{Rényi $q$-entropy} of $Y$ is defined to be
\[
    H_q(Y) \vcentcolon= \frac{1}{1-q}\log\Bigl(\sum_{y}\P(Y=y)^q\Bigr) = \frac{q}{1-q}\log(\lVert p_Y\rVert_q),
\]
where $p_Y(y) \vcentcolon= \P(Y = y)$ is the probability mass function of $Y$. For $q \in \{1,\infty\}$, the Rényi entropy is defined by
\[
    H_q(Y) \vcentcolon= \lim_{q'\to q}H_{q'}(Y).
\]
$H_{\infty}$ is also called \emph{min-entropy} and equals
\[
    H_{\infty}(Y) = \min_y\log\Bigl(\frac{1}{p_Y(y)}\Bigr) = \log\Bigl(\frac{1}{\lVert p_Y\rVert_{\infty}}\Bigr).
\]
At the other extreme, $H_1$ is the \emph{Shannon entropy}, often referred to simply as \emph{entropy}, and can be expressed as
\[
    H_1(Y) = \sum_{y}p_Y(y)\log\Bigl(\frac{1}{p_Y(y)}\Bigr).
\]
Rényi entropy is an information-theoretic concept which provides a way of measuring the randomness or uncertainty of a system; the parameter $q$ determines how much attention is given to rare events versus common ones. In our context, it will capture certain features of the distribution of $\ord(\pi_n)$ which are not visible from limit laws of Erdős--Turán type.

\subsection{Previous work and open problems}\label{subsec:prior_work}

The aforementioned work \cite{acan-burnette-eberhard-schmutz-thomas} (see also \cite{eberhard}) showed that, contrary to a conjecture of Godin \cite{godin}, $P_2(n)$ is not $O(1/n^2)$. Their argument was based on the following construction. Given a positive integer $n$, consider the set
\[
    K_n \vcentcolon= \bigl\{k \in \{0,1,\ldots,n-1\}\ \mid\ \lcm(1,\ldots,k) \mid n-k\bigr\}.
\]
By design, whenever $\pi_n$ has a cycle of length $n-k$ with $k \in K_n$, $\pi_n$ has order exactly $n-k$. If $n$ is large, $\pi_n$ can contain at most one such cycle, and this happens with probability $1/(n-k)$. One can show that the size of $K_n$ is unbounded as $n$ varies; the argument is then completed by considering the contribution of orders of the form $n-k$ with $k \in K_n$.

Notwithstanding the above result, Acan et al.\ \cite{acan-burnette-eberhard-schmutz-thomas} established an upper bound of the form
\begin{equation}\label{eq:equal_order_prob}
    P_2(n) \leq n^{-2+o(1)}.
\end{equation}
It is natural to wonder whether one can prove more precise estimates for $P_2(n)$, and Acan et al.\ \cite[\S6]{acan-burnette-eberhard-schmutz-thomas} raised several questions in this direction. First, they noticed that the values of $n$ for which $P_2(n)$ is large seem to have a particular arithmetic form, and asked what happens for $n$ not of this form, e.g.\ $n = k!+1$. This was taken further by Eberhard in the blog post \cite{eberhard-blog}, where it was conjectured that the limit inferior of $n^2P_2(n)$ is finite, that $n^2P_2(n)$ in fact converges along subsequences of the above form, and that $n^2P_2(n)$ is bounded by a very slowly growing function of $n$.

In a related direction, Acan et al.\ \cite[\S6]{acan-burnette-eberhard-schmutz-thomas} also discussed the maximum probability that $\ord(\pi_n)$ equals a particular value. Denoting this quantity by
\[
    P_{\infty}(n) \vcentcolon =\lVert p_n\rVert_{\infty} = \max_{m\in \N}p_n(m),
\]
they noted that
\begin{equation}\label{eq:lower_bound_cycle}
    P_{\infty}(n) \geq \P(\pi_n \text{ is an $n$-cycle}) = 1/n,
\end{equation}
and observed that $P_{\infty}(n)$ can be slightly larger than $1/n$. In the other direction, since $\lVert p_n\rVert_{\infty} \leq \lVert p_n\rVert_2$, their results on the collision probability readily imply an upper bound of the form
\begin{equation*}
    P_{\infty}(n) \leq n^{-1+o(1)}.
\end{equation*}
Acan et al.\ therefore raised the natural question of obtaining more precise bounds for the maximum of $p_n(m)$ and determining the values of $m$ for which this maximum is attained. They attributed this question to Erdős and Turán, who originally posed it in their survey \cite{erdos-turan-groups}, and reiterated it in their later paper \cite{erdos-turan-groups-iv}. This problem also appears in the booklet \emph{Some of Paul's favorite problems}, produced by a group of authors for the conference ``Paul Erdős and his mathematics'', held in Budapest in 1999. As such, it was recently indexed by Thomas Bloom on the website \emph{Erdős Problems} \cite{bloom-erdosproblems}.

\subsection{Main results}\label{subsec:main_results}

In this paper, we make substantial progress on the above problems by giving an essentially complete answer to the questions raised by Acan et al. In fact, our results are more general and address the Rényi $q$-entropy in the full range of the parameter $q$. Given $q \in (1,\infty)$, we define
\[
    P_q(n) \vcentcolon= \lVert p_n\rVert_q^q = \sum_{m\in\N}p_n(m)^q.
\]
This quantity can be thought of as a higher-order variant of the collision probability for $\ord(\pi_n)$: if $q$ is an integer, then $P_q(n)$ equals the probability that $q$ independent random permutations of $[n]$ have equal orders. Because of this, we will use ``collision probability'' to refer to $P_q(n)$ beyond the case $q = 2$; we will abuse the term ``collision entropy'' in a similar way.

Our first result is an anti-concentration estimate for $\ord(\pi_n)$ which confirms that the lower bound \eqref{eq:lower_bound_cycle} is asymptotically tight. We also obtain a structural description of exceptionally popular orders, i.e.\ values of $m$ for which $p_n(m)$ is close to the maximum. These turn out to be precisely the orders considered by Acan et al.\ in their disproof that $P_2(n) = O(1/n^2)$.

\begin{theorem}
\label{thm:asymp_for_max}
We have the asymptotic $P_{\infty}(n) \sim 1/n$. Moreover, if $n$ is sufficiently large, then any $m$ such that $p_n(m) \geq 1/n$ is of the form $n-k$ for some $k \in K_n$.
\end{theorem}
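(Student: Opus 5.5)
The plan is to split according to the size of the largest cycle of $\pi_n$. Fix $m$ and write $p_n(m)=\P(\ord(\pi_n)=m)$. Let $L$ be the length of the longest cycle of $\pi_n$, and fix a threshold $k_0=k_0(n)$ tending to infinity slowly (something like $k_0\approx \log n/\log\log n$). We bound separately the contribution of the events $L\ge n-k_0$ and $L<n-k_0$.

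\emph{Large longest cycle.} Suppose $L=n-k$ with $k\le k_0$ and $\ord(\pi_n)=m$. Then $n-k\mid m$. The remaining $k$ points form a permutation $\sigma\in S_k$, and $m=\lcm(n-k,\ord(\sigma))$. For $n$ large (so $n-k>n/2$), $m$ lies in $[n-k,\,(n-k)e^{O(\sqrt{k\log k})}]$.

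If $m=n-k'$ for some $k'$, the relevant events are: a cycle of length $n-k'$ whose complement $\sigma$ has order dividing $n-k'$. This has probability
\[
\frac{1}{n-k'}\,\P\bigl(\ord(\pi_{k'})\mid n-k'\bigr).
\]
This is at most $1/(n-k')$, with equality iff $\lcm(1,\dots,k')\mid n-k'$, i.e.\ $k'\in K_n$. Otherwise it is at most $(1-c/k')/(n-k')$, since a $k'$-cycle or a suitable cycle is excluded. Other values $k\ne k'$ with $n-k\mid m$ are impossible, because two distinct divisors of $m$ in $(n/2,n]$ cannot both be $m$ itself.

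If $m$ is not of the form $n-k'$, then $m\ge 2(n-k)$ for every such $k$. Then $\ord(\sigma)$ must contribute a factor $d=m/\gcd(\cdot)\ge 2$ coprime-ish to $n-k$. Summing over the at most $k_0$ choices of $k$ with $n-k\mid m$, each contributing at most $\frac1{n-k}\P(\ord(\pi_k)\text{ specific})\le \frac{1}{n-k}(1-\delta)$, the total is at most $(1-\delta)/n$ plus lower order. The delicate point is that several $k$ could have $n-k\mid m$ simultaneously. Two distinct divisors $n-k_1,n-k_2$ of $m$ force $m\ge\lcm(n-k_1,n-k_2)\ge (n-k_1)(n-k_2)/k_0$. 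This is far larger than $ne^{O(\sqrt{k_0\log k_0})}$, so at most one $k$ occurs.

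\emph{Small longest cycle.} We need $\P(\ord(\pi_n)=m,\ L<n-k_0)=o(1/n)$ uniformly in $m$. This is the main obstacle. My first attempt is to condition on the longest cycle length $\ell$, use $n-\ell\mid$-type divisibility, and then recurse. The probability that $\pi_n$ has a cycle of length $\ell\mid m$ and the rest has order dividing $m$ is at most $\frac{1}{\ell}\max_{m'}\P(\ord(\pi_{n-\ell})\mid m)$. Summing over divisors $\ell$ of $m$ in $[n/2,n-k_0]$ gives an extra gain because of the structure of the lcm.

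For the range where all cycles are $\le n/2$, I would invoke the earlier bound $P_\infty(n)\le n^{-1+o(1)}$ only as a crude input. I expect to need a genuinely stronger estimate $\P(\ord(\pi_n)=m,\ L\le n/2)\ll n^{-1-c}$. This should come from the fact that, given the multiset of cycle lengths, $m$ must be divisible by two large coprime-ish cycle lengths, which are constrained to divisors of $m$. The number of divisors of $m$ is $n^{o(1)}$ but the probability mass of each choice is $\asymp 1/\ell_1\ell_2$. Making this divisor-counting argument rigorous, in a way that beats $1/n$ by a power, is where I expect the real work to lie.

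\emph{Conclusion.} Combining the two parts gives $p_n(m)\le (1+o(1))/n$ for all $m$. Together with \eqref{eq:lower_bound_cycle}, this yields $P_\infty(n)\sim 1/n$. For the structural statement, we need a quantitative form of the above. When $m\neq n-k$ with $k\in K_n$, the large-cycle part gives at most $(1-c/k_0)/n$, and the small-cycle part is $O(n^{-1-c})$. Hence $p_n(m)<1/n$ for large $n$, and any $m$ with $p_n(m)\ge 1/n$ must be of the form $n-k$ with $k\in K_n$.
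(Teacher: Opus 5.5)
\textbf{There is a genuine gap: the short-longest-cycle estimate is unproved, and with your choice of $k_0$ it is false.}

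You flag the estimate $\P(\ord(\pi_n)=m,\ L<n-k_0)=o(1/n)$, uniformly in $m$, as the main obstacle, and it is in fact the heart of the theorem. The divisor-counting route you sketch cannot work on its own. It needs $\tau(m)=n^{o(1)}$, which is only guaranteed when $\log m=o(\log n\log\log n)$. Orders are typically of size $e^{(\frac12+o(1))\log^2 n}$. For instance, $m=\prod_{p\le y}p$ with $\vartheta(y)\approx\frac12\log^2n$ is an order in $S_n$, and it has $\tau(m)=2^{\pi(y)}=n^{\Omega(\log n/\log\log n)}$.

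The missing idea is to eliminate large $m$ first by splitting on the \emph{number} of cycles, as in Proposition~\ref{prop:large_orders}. For $m\ge n^{1+\varepsilon}$:
\begin{itemize}
\item the event $c(\pi_n)>C_2\log n$ is negligible by Corollary~\ref{cor:tail_bounds};
\item on $C_1\log\log n<c(\pi_n)\le C_2\log n$ one has $m\le n^{C_2\log n}$, hence $h(m)\ll\log\log n$, and Corollary~\ref{cor:cycle_divisors} gives $o(1/n)$;
\item on $c(\pi_n)\le C_1\log\log n$, Lemma~\ref{lm:cycles_and_primes} together with $\omega(m)\ll\log m/\log\log m$ gives $m^{-1+o(1)}\le n^{-1-\varepsilon/2}$.
\end{itemize}
Once $m\le n^{4/3}$, the short-cycle part is routine. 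By Corollary~\ref{cor:recursion}, condition on the cycle through a fixed point, of length $n-x$ with $n-x\mid m$. The terms with $x\ge n^{1/2}$ contribute at most $\tau(m)^2/n^{3/2}=O(n^{-4/3})$, and Lemma~\ref{lm:div_in_int_simple} leaves at most one $x<n^{1/2}$.

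Second, with $k_0\approx\log n/\log\log n$ the bound you claim for short longest cycles fails for infinitely many $n$. Take $n=\lcm(1,\ldots,k)+k$. Then $k\in K_n$ and $k\sim\log n>k_0$. A cycle of length $n-k$ forces the order to be exactly $n-k$, so $\P(\ord(\pi_n)=n-k,\ L=n-k)=1/(n-k)>1/n$.

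Since $\max K_n$ can be $\sim\log n$, your large-cycle window must extend well past $\log n$. The paper's window is effectively $x<n^{1/2}$, with uniqueness of the candidate coming from Lemma~\ref{lm:div_in_int_simple} rather than from Landau's bound.

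Finally, inside that window your assertion $\P(\lcm(\ord(\pi_k),n-k)=m)\le 1-\delta$ for $m\ne n-k$ is unproved. It needs an argument such as the paper's. Take a prime $p$ whose largest power $q\le k$ does not divide $n-k$. Then $1/q\le\P(q\mid\ord(\pi_k))\ll\log q/q$. Either this event or its complement lies inside $\{\lcm(\ord(\pi_k),n-k)\ne m\}$. Hence a probability within $O(n^{-1/3})$ of $1$ forces $\lcm(1,\ldots,k)\mid n-k$, and therefore $m=n-k$.
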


The second result identifies the mode of the distribution of $\ord(\pi_n)$, i.e.\ the exact value of $m$ for which $p_n(m)$ attains its maximum.

\begin{theorem}
\label{thm:equality_case}
For all sufficiently large $n$ and any $m$, we have $p_n(m) = P_{\infty}(n)$ if and only if $m = n - \max K_n$.
\end{theorem}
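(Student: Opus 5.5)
Assuming Theorem~\ref{thm:asymp_for_max}, the plan is to compare $p_n(n-k)$ for the various $k \in K_n$ precisely enough to see that it strictly decreases in $k$. Note first that $0 \in K_n$ always, since $\lcm(\,)=1$ divides $n$, and that $p_n(n) \geq 1/n$. So the maximiser $m$ satisfies $p_n(m) \geq 1/n$, and by Theorem~\ref{thm:asymp_for_max} we have $m = n-k$ for some $k \in K_n$. It therefore suffices to show that, for $n$ large and $k<k'$ both in $K_n$, we have $p_n(n-k) < p_n(n-k')$.

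To do this, I would compute $p_n(n-k)$ asymptotically for $k \in K_n$. Since $\lcm(1,\dots,k)$ divides $n-k$, we have $\lcm(1,\ldots,k) \leq n$, which forces $k \ll \log n$. Count permutations of order exactly $n-k$ in two classes.

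\emph{Class 1: those containing a cycle of length $n-k$.} The remaining $k$ points carry any permutation, because all cycle lengths $\leq k$ divide $n-k$. This class therefore contributes exactly $1/(n-k)$.

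\emph{Class 2: all other permutations of order $n-k$.} Every cycle length is then a proper divisor of $n-k$, hence at most $(n-k)/2$. I would bound the probability of this event by the same estimates underlying the proof of Theorem~\ref{thm:asymp_for_max}, which presumably give $p_n(m) - \mathbf 1[\text{large cycle contribution}] = O(n^{-1-c})$ or at least $o(1/n^2)$-type bounds for such configurations. Concretely, a permutation with all cycle lengths dividing $m$ and at most $m/2$ must have at least two cycles of length $\geq n/\text{(divisor-type bound)}$. Summing $\prod 1/d_i$ over such configurations yields roughly $\tau(m)^{2}/n^2 \cdot n^{o(1)} = n^{-2+o(1)}$.

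Hence $p_n(n-k) = \frac{1}{n-k} + O(n^{-2+o(1)})$. For $k<k'$, the difference of main terms is $\frac{1}{n-k'}-\frac{1}{n-k} \geq \frac{1}{n^2}$, which is unfortunately of the same order as the error. This is the main obstacle. I would refine Class 2 by isolating its dominant part: permutations with exactly two cycles of lengths $d_1,d_2 \mid n-k$, $d_1+d_2 \geq n-k$, plus small cycles. Such permutations require $n-k = d_1 + d_2 + j$ with $\lcm(d_1,d_2,\ldots)=n-k$. Since $d_1,d_2 \leq (n-k)/2$, this forces $d_1=d_2=(n-k)/2$ and $j=k$, contributing about $\frac{1}{2}\cdot\frac{4}{(n-k)^2}$ times a factor, and the order is then $(n-k)/2 \ne n-k$. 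So that contribution actually vanishes. More generally, any configuration of cycles that are proper divisors of $n-k$ with lcm equal to $n-k$ needs at least two cycles of sizes $n-k$ over distinct primes. With $d_i \le (n-k)/2$, the probability becomes $O(n^{-2}\cdot \text{small})$, and I would aim to show it is $o(1/n^2)$ using that $n-k$ has all its primes at most $k$ dividing it to high power. This would show that Class 2 is $o(n^{-2})$ uniformly in $k \ll \log n$.

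With the bound $p_n(n-k) = \frac{1}{n-k} + o(n^{-2})$ uniformly for $k \in K_n$, the main-term gap of at least $1/n^2$ dominates. So $p_n(n-k)$ is strictly increasing in $k \in K_n$, and the unique maximiser is $m = n-\max K_n$, as claimed.
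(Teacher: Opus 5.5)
Your reduction matches the paper's: by Theorem~\ref{thm:asymp_for_max} any maximiser is $n-k$ with $k\in K_n$, and permutations containing an $(n-k)$-cycle contribute exactly $\frac{1}{n-k}$. The gap is in Class~2.

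Your claim that Class~2 is $o(n^{-2})$ uniformly is false. The error is where you dismiss the configuration with two cycles of length $\frac{n-k}{2}$: you take its order to be $\frac{n-k}{2}$, but the small cycles on the remaining $k$ points also enter the lcm. Suppose $k\ge 2$ and $\nu_2(n-k)=\lfloor\log_2 k\rfloor$. Then a cycle of length $2^{\lfloor\log_2 k\rfloor}$ among those $k$ points restores the missing power of $2$. This configuration has probability exactly $\eta(n,k)=\frac{2^{1-\lfloor\log_2 k\rfloor}}{(n-k)^2}=\Theta(n^{-2})$.

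For a concrete case, take $4\mid n$. Then $2\in K_n$, and the cycle type $\{\frac{n-2}{2},\frac{n-2}{2},2\}$ has order $n-2$ and probability $\frac{1}{(n-2)^2}$. The correct expansion (Proposition~\ref{prop:exact_order}) is therefore $p_n(n-k)=\frac{1}{n-k}+\eta(n,k)+O(n^{-3+o(1)})$. Its second-order term has the same size as the main-term gap $\frac{1}{n-k'}-\frac{1}{n-k}$ when $k'-k$ is small, so you cannot argue that the gap dominates an $o(n^{-2})$ error.

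To close the argument you need two things.
\begin{itemize}
\item[(i)] Identify the $n^{-2}$ term exactly, and show every other Class~2 configuration contributes $O(n^{-3+o(1)})$. These are: three or more large cycles; or two large cycles not summing to $n-k$, which forces a linear number of points into cycles whose lengths are small divisors of $n-k$, an event of super-polynomially small probability.
\item[(ii)] Use an arithmetic fact you never invoke. Since $\lcm(1,\ldots,k)$ divides both $n-k$ and $n-k'$, it divides $k'-k$.
\begin{itemize}
\item[$\bullet$] If $k\ge 2$, then $k'-k\ge 2$. The gap is then at least $\frac{2}{(n-k)(n-k')}>\frac{2}{(n-k)^2}$, which beats $\eta(n,k)\le\frac{1}{(n-k)^2}$ with room to absorb the $O(n^{-3+o(1)})$ errors.
\item[$\bullet$] If $k\in\{0,1\}$, then $\eta(n,k)=0$. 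Here the gap, at least $\frac{1}{n^2}$, together with $\eta(n,k')\ge 0$ suffices.
\end{itemize}
\end{itemize}
This is exactly how the paper finishes.
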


\begin{remark}
\label{rem:suff_large}
In Theorems \ref{thm:asymp_for_max} and \ref{thm:equality_case}, one cannot drop the assumption that $n$ is sufficiently large; counterexamples do exist for small values of $n$. One could, in principle, extract from our arguments an explicit bound on how large $n$ needs to be. However, what one would get is most probably not small enough to allow for a naive computational check of the remaining cases.
\end{remark}

As a consequence of our results on the most probable order, we can deduce a sharp asymptotic for the min-entropy of the order.

\begin{corollary}
\label{cor:min_ent_asymp}
We have $H_{\infty}(\ord(\pi_n)) = (1+O(\frac{1}{n}))\log n$.
\end{corollary}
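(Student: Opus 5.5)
We need $H_\infty = \log(1/P_\infty(n)) = (1+O(1/n))\log n$, that is, $\log(1/P_\infty(n)) = \log n + O(\log n / n)$. Equivalently, $P_\infty(n) = n^{-1}\exp(O(\log n/n))$, which amounts to showing $P_\infty(n) = \frac1n\bigl(1+O(\frac{\log n}{n})\bigr)$.

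The lower bound is immediate from \eqref{eq:lower_bound_cycle}: $P_\infty(n)\ge 1/n$, so $H_\infty \le \log n$.

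For the upper bound we use Theorem~\ref{thm:equality_case}. For large $n$ the maximum is attained at $m=n-k$ with $k=\max K_n$, so it suffices to bound $p_n(n-k)$. Write $m=n-k$. A permutation of order $m$ whose longest cycle has length $\le n/2$ cannot occur, since $m>n/2$ (this will follow from $k$ being small). So with $k\ge 1$ such a permutation has a cycle of length $d\mid m$ with $d\ge m/2$. Since $d\le n<2m$ once $k<m$, we get $d=m$, so the permutation consists of an $m$-cycle plus a permutation of the remaining $k$ points whose order divides $m$. Hence
\[
p_n(n-k)=\frac{1}{n-k}\,\P(\ord(\pi_k)\mid n-k)\le \frac{1}{n-k}.
\]

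Next we need $k=O(\log n)$. Since $\lcm(1,\dots,k)=e^{(1+o(1))k}$ divides $n-k\le n$, we get $k\le(1+o(1))\log n$. Therefore
\[
P_\infty(n)\le \frac{1}{n-k}=\frac1n\Bigl(1+O\Bigl(\frac{\log n}{n}\Bigr)\Bigr).
\]
The case $k=0$ gives exactly $1/n$. Taking logarithms gives $H_\infty\ge \log n - O(\log n/n)=(1-O(1/n))\log n$. Small $n$ are absorbed into the implied constant.

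The only place needing care is the use of the prime number theorem, or just Chebyshev's bound $\lcm(1,\dots,k)\ge 2^{k-1}$, to force $k=O(\log n)$. This step is routine, so there is no real obstacle once Theorem~\ref{thm:equality_case} is granted.
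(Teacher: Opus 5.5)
There is a genuine gap: the bound $p_n(n-k)\le \frac{1}{n-k}$ is false. Your reduction to $m=n-k$ with $k=\max K_n$ via Theorem \ref{thm:equality_case}, the lower bound from \eqref{eq:lower_bound_cycle}, the estimate $k\ll\log n$, and your identification of the target $P_\infty(n)\le \frac1n+O(\frac{\log n}{n^2})$ all agree with the paper. The failure is in how you get there.

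You assert that a permutation of order $m>n/2$ must have a cycle of length at least $m/2$. That is wrong, because the order is the lcm of the cycle lengths and can be far larger than every cycle. For instance, if $m=ab$ with $\gcd(a,b)=1$ and $a,b\ge 2$, the cycle type $(a,b,1^{n-a-b})$ has order $m$. Even a cycle of length $d\mid m$ with $d\ge m/2$ would only give $d\in\{m/2,m\}$, and the bound $d\le n<2m$ does not rule out $d=m/2$. That case is exactly the source of the term $\eta(n,k)$ in Proposition \ref{prop:exact_order}: two $\frac{n-k}{2}$-cycles plus a $2^{\lfloor\log_2 k\rfloor}$-cycle.

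What actually holds for $k\in K_n$ is $p_n(n-k)=\frac{1}{n-k}+\P(\ord(\pi_n)=n-k,\ \pi_n \text{ has no } (n-k)\text{-cycle})\ge \frac{1}{n-k}$. The inequality is typically strict: $p_6(6)=1/3$, and $p_n(n)>1/n$ whenever $n$ is not a prime power. So your claim that ``$k=0$ gives exactly $1/n$'' is also false.

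The missing ingredient is a quantitative bound showing that permutations of order $n-k$ without an $(n-k)$-cycle contribute only $O(\frac{\log n}{n^2})$. The paper supplies this through Proposition \ref{prop:exact_order}, a Warlimont-type analysis of Cauchy's formula giving $p_n(n-k)=\frac{1}{n-k}+\eta(n,k)+O(n^{-3+o(1)})$ with $\eta(n,k)\le \frac{2}{(n-k)^2}$.

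This precision is genuinely needed. Bounding the excess crudely via Corollary \ref{cor:recursion}, with a first cycle a proper divisor $d\le m/2$ and the remainder of order dividing $m$, gives only $2\tau(n-k)^2/n^2=n^{-2+o(1)}$. The argument in the proof of Theorem \ref{thm:asymp_for_max} gives only an $O(n^{-4/3})$ error. Either would yield $H_\infty(\ord(\pi_n))\ge\log n-O(n^{-1+o(1)})$, which falls short of relative error $O(1/n)$.

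Once Proposition \ref{prop:exact_order} is in place, the rest of your argument goes through as written. Using $k\ll\log n$, you get $P_\infty(n)=\frac1n+O(\frac{\log n}{n^2})$, and taking logarithms finishes the proof.
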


We move on to discuss our results on the collision probability, which are the most substantial contribution of the paper. The results of Acan et al.\ \cite{acan-burnette-eberhard-schmutz-thomas} imply that $n^2P_2(n) \geq \log^*n - O(1)$ for infinitely many $n$, where $\log^*$ is the iterated logarithm function.\footnote{Strictly speaking, \cite[Theorem 2.1]{acan-burnette-eberhard-schmutz-thomas} states a bound which is weaker by a constant factor, but it is not difficult to modify the proof to obtain the above-stated bound.} Their argument carries over to the general case and yields the same lower bound for $n^qP_q(n)$, for arbitrary $q \in (1,\infty)$. In the following result, we show that this is actually best possible by giving an upper bound of the same shape for all values of $n$.

\begin{theorem}
\label{thm:collision_prob_upper_bound}
Let $q \in (1,\infty)$ be fixed. Then for all $n$ we have
\[
    P_q(n) \leq \frac{\log^*n + O(1)}{n^q}.
\]
\end{theorem}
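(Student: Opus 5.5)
The plan is to split $P_q(n)=\sum_m p_n(m)^q$ according to the size of $p_n(m)$, and to reduce the "large" contributions to the exceptional orders $n-k$ with $k\in K_n$ described in Theorem~\ref{thm:asymp_for_max}. Write
\[
  P_q(n)\le \|p_n\|_\infty^{\,q-1-\varepsilon}\sum_{m\,:\,p_n(m)\le \lambda/n}p_n(m)^{1+\varepsilon}+\sum_{m\,:\,p_n(m)>\lambda/n}p_n(m)^q
\]
for a small constant $\lambda$ and a suitable $\varepsilon\in(0,q-1)$.

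For the first (bulk) term we aim to show $\sum_m p_n(m)^{1+\varepsilon}\ll n^{-\varepsilon}$. By H\"older monotonicity of $\ell^r$ norms it suffices to work with one fixed small $\varepsilon$; combining this with $\|p_n\|_\infty\ll 1/n$ from Theorem~\ref{thm:asymp_for_max} gives a contribution $O(n^{-q})$. To bound $\sum_m p_n(m)^{1+\varepsilon}$, I would group permutations by the longest cycle length $L$.

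\begin{itemize}
  \item The event that $\pi_n$ has a cycle of length $L$ with $L>n/2$ has probability $1/L$. Given $L$, the order is $\lcm(L,\ord(\sigma))$, where $\sigma$ is a uniform permutation of $n-L$ points.
  \item For the order to be the fixed value $m$, we need $L\mid m$. So, for a fixed $m$, the contributing $L$ are divisors of $m$ in $(n/2,n]$, and for each such $L$ the residual permutation $\sigma$ on $k=n-L$ points must have order dividing $m$.
  \item The key estimate is that for a typical $k$ the residual order is spread out. Concretely, $\P(\ord(\sigma)\mid m)$ for $\sigma\in S_k$ is small unless $\lcm(1,\dots,j)\mid m$ for the relevant small $j$.
  \item Permutations with no cycle longer than $n/2$ are handled by the known $n^{-2+o(1)}$ type bounds of Acan et al.\ applied with room to spare. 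I would strengthen these to $O(n^{-1-\varepsilon'})$ for the $\ell^{1+\varepsilon}$ mass via the same divisor-sum and anatomy arguments.
\end{itemize}

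For the second term, orders with $p_n(m)>\lambda/n$ should, by the structural argument underlying Theorem~\ref{thm:asymp_for_max} run with threshold $\lambda/n$ instead of $1/n$, be of the form $n-k$ with $k\in K_n'$. Here $K_n'$ is a slight enlargement of $K_n$: those $k$ with $\lcm(1,\dots,k)\mid C(n-k)$, or with all but a bounded number of small cycle lengths dividing $n-k$. For such $m=n-k$ we have $p_n(m)\le (1+O(1/n))/(n-k)\le 2/n$, so this term is at most $O(|K_n'|)\,n^{-q}$ with constant $2^q$ absorbed. That is too weak, so instead we use $p_n(n-k)=\frac1{n-k}\,\P(\ord(\sigma_k)\mid n-k)+O(n^{-2})$. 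The main term $1/(n-k)^q\le (1+O(k/n))n^{-q}$ appears once for each genuinely exceptional $k$, while non-exceptional $k$ incur a factor $\P(\ord(\sigma_k)\mid n-k)^q$ which decays in $k$ and sums to $O(1)$.

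It remains to show $|K_n|\le \log^*n+O(1)$. If $k<k'$ both lie in $K_n$, then $\lcm(1,\dots,k)$ divides $n-k$ and $n-k'$, hence divides $k'-k$. Therefore $k'\ge k+\lcm(1,\dots,k)=k+e^{(1+o(1))k}$. Iterating this shows the elements of $K_n$ below $n$ grow like a tower of exponentials, giving $|K_n|\le\log^*n+O(1)$.

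The main obstacle is the bulk estimate: proving that the $\ell^{1+\varepsilon}$ mass of the non-exceptional orders is $O(n^{-\varepsilon})$ uniformly in $n$, rather than merely $n^{-\varepsilon+o(1)}$. This requires quantitative control of $\P(\ord(\sigma_k)\mid m)$ summed over divisors $L$ of $m$ near $n$, which I would attack with the divisor bound together with the rarity of integers $m$ having many divisors in $(n/2,n]$.
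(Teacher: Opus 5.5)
\textbf{The bulk term is not proved.} The bound you call the main obstacle, $\sum_m p_n(m)^{1+\varepsilon}\ll n^{-\varepsilon}$, is immediate: $\sum_m p_n(m)^{1+\varepsilon}\le\|p_n\|_\infty^{\varepsilon}\sum_m p_n(m)\ll n^{-\varepsilon}$ by Theorem~\ref{thm:asymp_for_max}. Inserting it into your H\"older step gives only $\|p_n\|_\infty^{q-1-\varepsilon}\cdot n^{-\varepsilon}\ll n^{-(q-1)}$. That is just the trivial bound $P_q(n)\le\|p_n\|_\infty^{q-1}$, a full factor of $n$ short. What you actually need is $\sum_{m:\,p_n(m)\le\lambda/n}p_n(m)^{1+\varepsilon}\ll n^{-1-\varepsilon}$, which is the statement being proved at exponent $1+\varepsilon$ with the popular orders removed. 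Interpolating in $q$ only moves the problem to an exponent closer to $1$, which is no easier.

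There is also no ``room to spare'' in the bulk. After scaling by $n^q$, it contains the $O(1)$ total contribution of all non-exceptional short first jumps. The permutations with no cycle longer than $n/2$ carry probability about $1-\log 2$. Any $n^{o(1)}$ loss is fatal, whether it comes from Acan et al.\ or from a divisor-bound/Cauchy--Schwarz step over the divisors $L\in(n/2,n]$ of $m$.

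The missing idea is the paper's multiscale argument:
\begin{itemize}
\item Discard $m>M$ via Lemma~\ref{lm:large_orders}, which needs the PNT-strength bound on $\omega$.
\item Split the first jump $x$ in \eqref{eq:recursion} into the ranges $[X_j,X_{j-1})$ and apply H\"older separately in each. The only loss is the maximal number of divisors of $m\le M$ in $(n-X_{j-1},n-X_j]$, controlled by Lemma~\ref{lm:interval_bounds} via Lemmas~\ref{lm:div_in_int_simple} and \ref{lm:div_in_int_hard}.
\item Show that the collision probability of $\lcm(\ord(\pi_x),n-x)$ is $\ll x^{-1-c'}$ for non-exceptional $x$ and $\ll x^{-c'}$ for $x\ge X_3$ (Lemma~\ref{lm:collision_prob}; the second case needs $C_3>e^\gamma$).
\end{itemize}
The power savings then beat the divisor losses in every range except $[0,X_3)$. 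There each relevant $m$ has at most one divisor, so H\"older is lossless. Nothing in your proposal replaces this.

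\textbf{The popular-orders term has a counting gap.} The threshold $\lambda/n$ does not force $k\in K_n$. Suppose $2^a\le k<2^{a+1}$, $\lcm(1,\dots,k)/2\mid n-k$ and $\nu_2(n-k)=a-1$. Then $\P(\ord(\pi_k)\mid n-k)=1-2^{-a}$, so $p_n(n-k)\gtrsim(1-2^{-a})/n$ although $k\notin K_n$. A CRT-style tower construction produces $n$ with $\asymp\log^*n$ such $k$. For $k\in K_n'\setminus K_n$ the factor $\P(\ord(\pi_k)\mid n-k)^q$ is at least $\lambda^q-o(1)$. It does not decay, and these terms do not sum to $O(1)$. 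You therefore need $|K_n'|\le\log^*n+O(1)$ for the enlarged set, with leading constant $1$. Your argument via $\lcm(1,\dots,k)\mid k'-k$ gives this only for $K_n$. The paper gets it from Lemma~\ref{lm:except_bounds}, which proves tower growth for the much larger set $E_n$ defined by \eqref{eq:except_defn}, using reciprocal-prime sums instead of exact divisibility. This part is repairable; the bulk estimate is not, without new ideas.
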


Turning to questions about the lower limit behaviour of the collision probability, our next two results confirm the prediction that
\[
    \liminf_{n\to\infty}n^2P_2(n) < +\infty.
\]
The first one achieves this by identifying explicit subsequences along which the function converges. More specifically, it confirms the conjecture from \cite{eberhard-blog} that convergence holds along shifted factorials.

\begin{theorem}
\label{thm:collision_prob_explicit}
Let $q \in (1,\infty)$ and $D \in \Z$ be fixed. For $k \in \N$ define $n_k \vcentcolon= k! + D$. Then there exists a constant $\beta_{q,D} > 0$ such that
\[
    \lim_{k\to\infty}n_k^qP_q(n_k) = \beta_{q,D}.
\]
Moreover, we have the explicit expression
\begin{equation}\label{eq:collision_prob_const}
    \beta_{q,D} = \sum_{x \geq 0}\sum_{m\in\N}\P\Bigl(\frac{\ord(\pi_x)}{\gcd(\ord(\pi_x),x-D)} = m\Bigr)^q.
\end{equation}
\end{theorem}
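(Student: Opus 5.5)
We decompose $\pi_n$ according to its cycles of length greater than some threshold, and show that almost all of the $q$-th moment mass comes from permutations with exactly one long cycle of length $n-x$, where $x$ is bounded. Conditioning on such a cycle, the remaining permutation is a uniform $\pi_x\in S_x$, so the order is $\lcm(n-x,\ord(\pi_x))$. When $n=n_k=k!+D$ and $x$ is fixed with $k$ large, we have $n-x=k!+D-x$. Every prime power dividing $\ord(\pi_x)$ is at most $x\le k$, so it divides $k!$. Hence
\[
\gcd(\ord(\pi_x),\,n-x)=\gcd(\ord(\pi_x),\,D-x)=\gcd(\ord(\pi_x),\,x-D).
\]
Therefore $\ord(\pi_n)=(n-x)\cdot\ord(\pi_x)/\gcd(\ord(\pi_x),x-D)$. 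The probability of this event, for a given $x$ and a given value $m$ of the reduced ratio, is
\[
\frac{1}{n-x}\,\P\Bigl(\frac{\ord(\pi_x)}{\gcd(\ord(\pi_x),x-D)}=m\Bigr).
\]

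Distinct pairs $(x,m)$ with $x\le X$ give distinct orders $(n-x)m$ once $k$ is large. If $(n-x)m=(n-x')m'$ with $x\ne x'$, then $m,m'$ are bounded (they divide $\lcm(1,\dots,X)$). But $(n-x)/(n-x')\to1$ while $m/m'$ stays a bounded rational different from $1$, which is impossible for large $n$. For fixed $m$ the order values are clearly distinct. So the truncated contribution to $n^qP_q(n)$ from $x\le X$ tends to
\[
\sum_{x\le X}\sum_m\P(\cdot=m)^q,
\]
the factors $(n/(n-x))^q\to1$. We also need that orders coming from the "one long cycle, $x\le X$" configuration are not hit significantly by other configurations. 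For a lower bound we only need $p_n(m)\ge$ the contribution, which suffices as a lower bound. For the matching upper bound, use $(a+b)^q\le a^q+O(a^{q-1}b+b^q)$ together with Hölder, so that cross terms are controlled by the $q$-moment of the remainder.

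The main step is to show that the remainder is small: writing $p_n=p_n^{\le X}+r_n$, where $r_n$ collects permutations whose longest cycle has length $<n-X$, we need $n^q\lVert r_n\rVert_q^q\to0$ as $X\to\infty$, uniformly in $k$. This is essentially the content of the proof of Theorem~\ref{thm:collision_prob_upper_bound}. That argument should show that configurations with $x>X$ contribute $O(\sum_{x>X}\text{small})$, except for the special shifts in $K_n$ producing the $\log^*n$ term. Along $n=k!+D$, the set $K_n$ is bounded: $\lcm(1,\dots,j)\mid k!+D-j$ forces $\lcm(1,\dots,j)\mid D-j$ for $j\le k$, so $j$ is bounded in terms of $D$ (for $D-j\ne0$). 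Larger $j$ would need $\lcm(1,\dots,j)\le n$, i.e.\ $j\lesssim\log n$, and requires $j>k$ divisibility that fails. So here I would revisit the upper-bound argument and extract a tail bound of the form $\epsilon(X)+o_k(1)$. I expect this uniform tail control to be the main obstacle, particularly handling permutations with no cycle longer than $n/2$, or with two large cycles, whose mass in $\ell^q$ must be $o(n^{-q})$. For these I would invoke the paper's bound $\lVert\cdot\rVert_q^q\le n^{-q-c}$ for non-single-long-cycle configurations, via $\ell^q\le\ell^\infty{}^{q-1}\ell^1$ and an $\ell^\infty$ bound of $n^{-1-c}$.

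Finally, the convergence of the series defining $\beta_{q,D}$ follows from Fatou and the uniform bound $n^qP_q(n)=O(\log^* n)$ restricted to bounded $K_n$. Alternatively, bound the $x$-th term by $\lVert p'_x\rVert_\infty^{q-1}\le$ something summable: the reduced ratio has max probability $\ll x^{-c}$ unless $x-D$ is divisible by $\lcm$ of large numbers, which happens only for boundedly many $x$. Positivity is clear from the $x=0$ term, which equals $1$.
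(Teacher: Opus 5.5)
Your main-term analysis is correct. The identity $\gcd(\ord(\pi_x),n-x)=\gcd(\ord(\pi_x),x-D)$ for $x\le k$, obtained from $\ord(\pi_x)\mid\lcm(1,\ldots,x)\mid k!$, is a cleaner version of the paper's $p$-adic check, and your distinctness argument is fine for fixed $X$. The gap is the tail estimate you yourself flag as the main obstacle. That estimate is the entire content of the theorem beyond the main term, and neither mechanism you propose delivers it.

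First, the interpolation $\lVert r\rVert_q^q\le\lVert r\rVert_\infty^{q-1}\lVert r\rVert_1$ cannot give $o(n^{-q})$ for configurations with no cycle longer than $n/2$ or with two large cycles. These carry $\ell^1$-mass of order $1$, so $\lVert r\rVert_\infty\le n^{-1-c}$ yields only $n^{-(1+c)(q-1)}$, which is not $o(n^{-q})$ unless $c>1/(q-1)$. You would need $\lVert r\rVert_\infty=o(n^{-q/(q-1)})$, and for $q\le 2$ this is false: the cycle type $\{a,n-a\}$ with $n/3<a<n/2$ already gives a point mass at least $1/(a(n-a))\ge 4/n^2$. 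The paper proves no power-saving bound of this kind for such configurations. On each first-jump range $x\in[X_j,X_{j-1})$ it saves only a factor $X_j^{-c'/2}$. It gets this by applying H\"older only over the divisors of $m$ lying in $n-[X_j,X_{j-1})$ (Lemmas~\ref{lm:cauchy_schwarz} and~\ref{lm:interval_bounds}), and then using the $\ell^q$ bound of Lemma~\ref{lm:collision_prob} for the smaller permutation $\pi_x$, not an $\ell^\infty\cdot\ell^1$ bound for $\pi_n$. The same problem kills your alternative convergence argument for $\beta_{q,D}$. Since $\P(\pi_x \text{ is an } x\text{-cycle})=1/x$, the $x$-th term bounded by $\lVert p'_x\rVert_\infty^{q-1}$ is at least $x^{-(q-1)}$, which is not summable for $q\le 2$.

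Second, for a single long cycle of length $n-x$ with $x\ge X$, the obstruction is not $K_n$ but the larger set $E_n$ of exceptional $x$. These are the $x$ for which $n-x$ merely shares enough small primes with $\lcm(1,\ldots,x)$ that $h(\gcd(\lcm(1,\ldots,x),n-x))\ge c\log x/\log\log x$. It is $E_n$, not $K_n$, that produces the $\log^*n$ in Theorem~\ref{thm:collision_prob_upper_bound}, so boundedness of $K_{n_k}$ does not control the remainder. The missing step has two parts:
\begin{itemize}
\item Apply Theorem~\ref{thm:except_collision} with $X=\min(k,k+D)$, which gives $0\le R\le|E_n\cap[X,n)|+O(X^{-\Omega(1)})$.
\item Prove $E_{n_k}\cap[X,n_k)=\emptyset$.
\end{itemize}
The paper proves the second part by splitting the primes dividing $n-x$. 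Those with $p\le k$ also divide $x-D$, so by Corollary~\ref{cor:rec_prime_div} their reciprocals sum to at most $\log\log\log x+O(1)$. Those with $p>k$ contribute at most $\omega(n-x)/k\ll 1$, because $\omega(n-x)\ll\log n/\log\log n$ by Lemma~\ref{lm:prime_divisor_bound} while $k\gg\log n/\log\log n$. Together these contradict the lower bound $\log\log x-\log\log\log x-O(1)$ of Lemma~\ref{lm:except_equiv} once $x$ is large. Your Fatou-based convergence argument has the same dependence. The general uniform bound $|E_n|+O(1)$ is only $\log^*n+O(1)$, so you need that large $x\neq D$ are non-exceptional here (for $x\le k$, $\gcd(\lcm(1,\ldots,x),n_k-x)$ divides $x-D$). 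That is what gives the summable bound $x^{-1-c'}$ from Lemma~\ref{lm:collision_prob}.
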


The second result shows that the function $n^qP_q(n)$ is bounded on average. As a consequence, it follows that $n^qP_q(n) \ll 1$ holds fairly often, namely for all $n$ in a set of positive lower density.

\begin{theorem}
\label{thm:collision_prob_average}
For any fixed $q \in (1,\infty)$, we have
\[
    \frac{1}{N}\sum_{n=1}^{N}n^qP_q(n) = O(1).
\]
\end{theorem}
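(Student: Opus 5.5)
We will deduce the average bound from a pointwise upper bound for $n^qP_q(n)$ by an arithmetic quantity of $n$ which is bounded on average; in other words, we take the structure of the proof of Theorem \ref{thm:collision_prob_upper_bound} as given and isolate which part of the bound depends on $n$. The heuristic is that the $\log^*n$ term there comes only from the exceptional orders $n-k$ with $k\in K_n$ (and analogous near-full-cycle configurations). Each $k\in K_n$ contributes about $n^{-q}$ to $P_q(n)$, while the remaining orders contribute $O(n^{-q})$ uniformly. So the first step is to establish, or extract from the proof of Theorem \ref{thm:collision_prob_upper_bound}, a bound of the form
\[
    n^qP_q(n) \ll 1 + \sum_{k \geq 0} c_k\,\mathbf{1}\bigl[\lcm(1,\ldots,k)\mid n-k\bigr],
\]
or a slightly more flexible version. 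In that version the indicator is replaced by a weight $w_k(\gcd(n-k, L_k))$, with $L_k=\lcm(1,\ldots,k)$ and $\sum_k c_k$ possibly divergent. The weight should decay like a negative power of $L_k/\gcd(n-k,L_k)$, reflecting the probability-$q$ penalty when $\ord(\pi_{k})$ does not divide $n-k$. Concretely, we decompose by the longest cycle of length $n-k$. The order then equals $\lcm(n-k,\ord(\sigma))$ with $\sigma\in S_k$ random, and the $q$-th power sum over $m$ is controlled by $\sum_m\P\bigl(\ord(\sigma)/\gcd(\ord(\sigma),n-k)=m\bigr)^q$, in the spirit of \eqref{eq:collision_prob_const}. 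Permutations without a long cycle (e.g.\ $k>n/2$) should contribute $o(n^{-q})$ by the $n^{-2+o(1)}$-type methods of Acan et al.

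The second step is averaging. For fixed $k$, the density of $n$ with $L_k\mid n-k$ is $1/L_k$. More generally, the average of $w_k(\gcd(n-k,L_k))$ over $n\le N$ is
\[
    \ll \sum_{d\mid L_k} w_k(d)/d + O(\tau(L_k)/N).
\]
Summing over $k\le \log N$ (larger $k$ are irrelevant since $L_k\le N$ forces $k\ll\log N$), the main term is $\sum_k \E_d\, w_k \ll \sum_k L_k^{-\delta}$ or similar, which converges because $L_k=e^{(1+o(1))k}$. The error terms total at most $\sum_{k\ll\log N}\tau(L_k)/N=o(1)$. For the weighted version we need $\sum_{d\mid L}d^{-1}(L/d)^{-\delta}$ to be small. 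This is where the decay of $w_k$ matters. The quantity is $\ll L^{-\delta}\sigma_{\delta-1}$-type expressions, which are fine since $\sum_{d\mid L}d^{-1+\delta}\cdot L^{-\delta}\le L^{-\delta}\prod_{p\le k}(1-p^{\delta-1})^{-v}$ with $v$ the exponent of $p$ in $L$, and this is still geometrically small in $k$ for $\delta$ small.

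The main obstacle is the first step. We must make the contribution of a near-full cycle of length $n-k$ quantitatively depend on $\gcd(n-k,L_k)$ rather than just on membership in $K_n$, with enough decay to sum. We also must handle the case where several different $k$ contribute collisions to the same order $m$, since then the $q$-th powers do not simply add. We would try first to use Minkowski/Hölder: write $p_n=\sum_k p_n^{(k)}$ with dyadic grouping in $k$. Then $\|p_n\|_q\le\sum_j\|\sum_{k\sim 2^j}p_n^{(k)}\|_q$, and the overlap between different $k$ is absorbed by the super-exponential decay of the weights.
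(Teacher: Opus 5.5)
Your architecture matches the paper's. You bound $n^qP_q(n)$ pointwise by a quantity depending only on the numbers $\gcd(n-k,L_k)$, then interchange the sums and average over $n$. The gap is that this pointwise bound is the whole difficulty, and you do not supply it. In the paper it is Theorem \ref{thm:except_collision}, $n^qP_q(n)\le |E_n|+O(1)$, whose proof takes up essentially all of Section \ref{sec:collision_entropy}.

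Your sketch of this step does not work as stated. The claim that permutations without a long cycle contribute $o(n^{-q})$ ``by the $n^{-2+o(1)}$-type methods of Acan et al.'' is unsupported: those methods lose exactly the factor $n^{o(1)}$ that must be removed. In the weighted version, the middle range $(\log n)^{O(1)}\le k\le n/2$ is also not ``irrelevant''. The paper makes the terms $x\ge(\log n)^{C_3}$ of \eqref{eq:recursion} negligible only by combining several ingredients:
\begin{itemize}
\item the H\"older step (Lemma \ref{lm:cauchy_schwarz});
\item counts of divisors of $m\le M$ in short intervals near $n$ (Lemmas \ref{lm:div_in_int_simple}, \ref{lm:div_in_int_hard}, \ref{lm:interval_bounds});
\item the bound $x^{-1-c'}$ for non-exceptional $x$ (Lemma \ref{lm:collision_prob});
\item the sparsity of $E_n$ (Lemma \ref{lm:except_bounds});
\item the removal of $m>M$ (Lemma \ref{lm:large_orders}).
\end{itemize}
Your overlap problem for $x<(\log n)^{C_3}$ is not resolved by decay of weights. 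It is resolved by Landau's theorem plus Lemma \ref{lm:div_in_int_simple}, which show that each relevant $m$ receives a contribution from at most one such $x$.

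The weight shape you posit is also wrong, and this breaks your Step 2. The cost of taking $\lcm$ with $n-k$ depends on which \emph{small} primes divide $n-k$, i.e.\ on the harmonic weight $h(\gcd(n-k,L_k))$. It does not depend on the size of $L_k/\gcd(n-k,L_k)$. For example, if $\gcd(n-k,L_k)=L_k/\prod_{k/2<p\le k}p$, the $x=k$ term alone contributes at least $(1-O(1/\log k))^q$ to $n^qP_q(n)$. Yet here $L_k/\gcd=e^{(1/2+o(1))k}$. This example also shows the indicator version $1[L_k\mid n-k]$ is not the right statement.

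More decisively, the criterion that can actually be certified is $h(\gcd(n-k,L_k))\ge c\log k/\log\log k$. Such gcds can be as small as $\exp(k^{O(1/\log\log k)})$, for instance the part of $L_k$ supported on primes below $k^{\Theta(1/\log\log k)}$. For these, $L_k/\gcd=L_k^{1-o(1)}$. Unless you prove a genuinely stronger pointwise estimate than the paper's, your weight must be $\gg 1$ on these $d$. That forces $c_k\ge L_k^{\delta(1-o(1))}$, and then
\[
c_kL_k^{-\delta}\sum_{d\mid L_k}d^{\delta-1}\ge\exp\bigl(-\delta k^{o(1)}+\Omega(k^{\delta}/\log k)\bigr)\to\infty
\]
for every fixed $\delta>0$. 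The paper avoids this in two steps. First, Corollary \ref{cor:reciprocal_divisors} shows an exceptional gcd is at least $T=\exp(\exp(c''\log x/\log\log x))$. Second, it applies Rankin's trick with exponent $\sigma=c''/\log\log x$, which must shrink with $x$.

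A minor point: your $O(\tau(L_k)/N)$ error term is unnecessary, and it would be fatal for large $k$. The number of $n\in(k,N]$ with $d\mid n-k$ is at most $N/d$ exactly.
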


As a corollary of our results about the collision probability, we obtain an asymptotic for the Rényi $q$-entropy with an error term of best possible shape.

\begin{corollary}
\label{cor:collision_ent_asymp}
For any fixed $q \in (1,\infty)$, we have 
\[
    H_q(\ord(\pi_n)) = \frac{q\log n + O(\log(\log^*n))}{q-1}.
\]
\end{corollary}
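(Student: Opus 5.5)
The corollary follows by inserting two-sided bounds for $P_q(n)$ into the definition
\[
    H_q(\ord(\pi_n)) = \frac{1}{1-q}\log P_q(n) = \frac{q\log n - \log(n^qP_q(n))}{q-1}.
\]
So it suffices to show $\log(n^qP_q(n)) = O(\log(\log^* n))$. I will bound $n^qP_q(n)$ above by something of size $O(\log^*n)$ and below by a positive constant, uniformly in large $n$. Both bounds are needed, since $\log$ of a small quantity is large and negative.

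For the upper bound I use Theorem \ref{thm:collision_prob_upper_bound}: $n^qP_q(n) \leq \log^*n + O(1)$. For $n \geq 3$ we have $\log^* n \geq 1$, so this gives $\log(n^qP_q(n)) \leq \log(\log^*n) + O(1)$. The additive $O(1)$ is absorbed into $O(\log(\log^*n))$ provided $\log^* n \geq 2$, say. For the finitely many remaining $n$, $P_q(n)$ is a fixed positive number and everything is trivially bounded, so we may adjust constants.

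For the lower bound I use the $n$-cycle event. By \eqref{eq:lower_bound_cycle},
\[
    P_q(n) \geq p_n(n)^q \geq \P(\pi_n \text{ is an $n$-cycle})^q = n^{-q},
\]
since every $n$-cycle has order $n$. Hence $n^qP_q(n) \geq 1$, and so $\log(n^qP_q(n)) \geq 0$.

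Combining the two bounds gives $0 \leq \log(n^qP_q(n)) \leq \log(\log^* n) + O(1) = O(\log(\log^*n))$, which is the claimed asymptotic. The only real input is Theorem \ref{thm:collision_prob_upper_bound}; once it is assumed, nothing here is difficult. The one point needing care is the bookkeeping with the additive constant when $\log^* n$ is small, which is handled as above.

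"Best possible shape" refers to the sharpness of the error term, not to anything this corollary has to prove. It follows from the lower bound $n^qP_q(n) \geq \log^* n - O(1)$ for infinitely many $n$ (the Acan et al.\ construction), together with Theorem \ref{thm:collision_prob_average}, which gives $O(1)$ for a positive-density set of $n$. These show the error term genuinely fluctuates between $O(1)$ and $\log(\log^* n)$, but they are only remarks about optimality.
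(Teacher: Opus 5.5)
Your proof is correct and matches the intended derivation: rewrite $H_q(\ord(\pi_n)) = \frac{q\log n - \log(n^qP_q(n))}{q-1}$, then sandwich $1 \le n^qP_q(n) \le \log^*n + O(1)$ using the $n$-cycle bound \eqref{eq:lower_bound_cycle} and Theorem \ref{thm:collision_prob_upper_bound}. One small correction to your remark about the finitely many small $n$: when $\log^* n \le 1$ the quantity $\log(\log^* n)$ is $0$ or undefined, so adjusting constants cannot help, but the statement is meant for large $n$ anyway under the paper's convention on iterated logarithms.
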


Our final theorem establishes an asymptotic for the Shannon entropy of $\ord(\pi_n)$.

\begin{theorem}
\label{thm:shannon_entropy}
We have the asymptotic $H_1(\ord(\pi_n)) \sim \frac{1}{2}\log^2n$.
\end{theorem}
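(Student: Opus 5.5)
We prove matching upper and lower bounds for $H_1(\ord(\pi_n)) = \E[\log(1/p_n(\ord(\pi_n)))]$.

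\emph{Upper bound.} A random variable supported on a set $S$ has entropy at most $\log|S|$, but the full support of $\ord(\pi_n)$ is too large: by Landau it has size up to $e^{(1+o(1))\sqrt{n\log n}}$. Instead we use the standard truncation inequality
\[
    H(Y) \leq \log|S| + \P(Y\notin S)\log(|\mathrm{supp}\,Y|) + \log 2 .
\]
Take $S$ to be the set of $m \le \exp\bigl(\tfrac12\log^2 n + \log^{3/2+\epsilon} n\bigr)$, so that $\log|S| = (\tfrac12+o(1))\log^2 n$. The support has logarithm $O(\sqrt{n\log n})$, so we need $\P(\ord(\pi_n)\notin S) = o(\log^2 n/\sqrt{n\log n})$. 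The Erdős--Turán law alone does not give this; we need a large-deviation tail bound. We get one from $\ord(\pi_n) \le \prod_j C_j$, where $C_j$ are the cycle lengths, i.e.\ from $\log \ord(\pi_n) \le \sum_j \log C_j$. Using the exponential moment
\[
    \E\Bigl[\prod_j C_j^{\theta}\Bigr] = [z^n]\exp\Bigl(\sum_{k} k^{\theta-1}z^k\Bigr),
\]
evaluated by a saddle-point estimate, we can show that $\sum_j\log C_j$ exceeds $\tfrac12\log^2 n + t$ with probability decaying like $\exp(-ct^2/\log^3 n)$ for moderate $t$. It also has a polynomially small tail beyond $\log^3 n$, via Markov's inequality with a fixed $\theta$: one gets $\E[\prod C_j^{\theta}] \approx n^{O(1)}e^{O(n^{\theta/(1+\theta)})}$, which we need to check carefully. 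If this route is awkward, there is a cleaner alternative: the set $S'$ of orders $m$ all of whose prime-power divisors $\ell^a$ satisfy $\ell^a\le n$ and which divide some permutation's order with few large cycles. Its size can be counted directly, since $\pi_n$ has at most $O(\log n)$ cycles except with probability $n^{-A}$, and then the number of possible orders is at most $n^{O(\log n)} = e^{O(\log^2 n)}$. To get the sharp constant $\tfrac12$ we refine this: with probability $1-o(1)$ the number of cycles is $(1+o(1))\log n$, so the number of possible multisets of cycle lengths is at most $\binom{n+(1+o(1))\log n}{(1+o(1))\log n}/(\text{ordering})$. Counting \emph{sorted} tuples $C_1\ge C_2\ge\dots$ with $\sum \log C_j \le (\tfrac12+o(1))\log^2n$ gives $\log$-count $(\tfrac12+o(1))\log^2 n$. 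We plan to use this counting version, conditioning on the typical event $E$ and bounding $H \le H(\cdot\mid E)+ \P(E^c)\cdot O(\sqrt{n\log n}) + \log 2$. The main obstacle is ensuring $\P(E^c)=o(\log^{3/2} n/\sqrt n)$, which requires a polynomial-strength tail for atypical cycle structure.

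\emph{Lower bound.} We use $H_1 \ge \E[\log(1/p_n(\ord\pi_n))]$, with the aim of showing that $p_n(\ord\pi_n)\le e^{-(1/2-\epsilon)\log^2 n}$ with probability $1-o(1)$. Fix $m$ with $\log m \ge (\tfrac12-\epsilon/2)\log^2 n$, which holds typically by Erdős--Turán. Every permutation of order $m$ has all cycle lengths dividing $m$, so
\[
    p_n(m) \le \P(\text{all cycles divide } m) = [z^n]\exp\Bigl(\sum_{d\mid m} z^d/d\Bigr).
\]
Bound this by $r^{-n}\exp\bigl(\sum_{d\mid m} r^d/d\bigr)$ with $r = 1+\frac{\log n}{n}$ or similar. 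Since $\sum_{d\mid m,\,d\le n}1/d$ is small unless $m$ has many small divisors, this gives a bound of the form $\exp(-(\text{something})\log n)$. That is far too weak: it is only polynomial, whereas we need $e^{-c\log^2 n}$.

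We therefore use a different argument: $p_n(m) \le \P(\ord \pi_n = m)$, with a typical $\pi$ having about $\log n$ cycles and lengths spread on a log scale. Conditioned on the order $m$, the cycle structure must consist of divisors of $m$ whose lcm is $m$. Given the large primes dividing $m$ (each prime $\ell > n^{\delta}$ dividing $m$ must divide some cycle length), the probability that the random cycle lengths hit exactly those primes is small. Concretely, $\pi$ typically has $\asymp \log n$ cycles with "distinct large prime factors" $\ell_1,\dots,\ell_r$, and by a Poisson–Dirichlet/independence heuristic each specified $\ell_i$ costs a factor of about $1/\ell_i$. Making this rigorous, we get $p_n(m)\le \prod_i (C/\ell_i)\cdot n^{O(1)}$, and $\sum \log \ell_i \ge (\tfrac12-\epsilon)\log^2 n$ typically, because the largest prime factors of cycle lengths carry most of $\log\ord$. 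Making this product bound rigorous is the key step. We would attempt it by summing over permutations of order $m$ grouped by which cycle contains each $\ell_i$, using $\#\{\text{cycle lengths } \equiv 0 \bmod \ell\}$ estimates.
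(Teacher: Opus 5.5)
\textbf{The upper bound has a genuine gap, and the estimate it needs is false, not just hard.} Both of your versions reduce to a two-tier bound
$H\le\log 2+\log|S|+\P(\ord(\pi_n)\notin S)\cdot O(\sqrt{n\log n})$ with $\log|S|=(\frac12+o(1))\log^2n$. So you need $\P(\ord(\pi_n)\notin S)=o(\log^{3/2}n/\sqrt n)$.

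The upper tail of $\log\ord(\pi_n)$ is far heavier than that. Tilting to the Ewens measure with parameter $1+3\eta$ shows $\P(\log\ord(\pi_n)\ge(\frac12+\eta)\log^2n)\ge n^{-O(\eta^2)}$ for every fixed $\eta>0$. Hence for your $S$ one has $\P(\ord(\pi_n)\notin S)\ge n^{-o(1)}$. Likewise your typical event satisfies $E^c\supseteq\{c(\pi_n)\ge 2\log n\}$, which has probability $n^{-(2\log 2-1)+o(1)}$ with $2\log2-1<\frac12$. Your own heuristic already signals this: $\exp(-ct^2/\log^3n)$ at $t=\log^{3/2+\epsilon}n$ equals $\exp(-c\log^{2\epsilon}n)$, which is much larger than $n^{-1/2}$.

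The missing idea is a middle tier. Put $U=e^{(1/2+\eta)\log^2n}$ and $V=e^{B\log^2n}$, and condition on which of $(0,U]$, $(U,V]$, $(V,\infty)$ contains the order. This gives
\[
    H_1(\ord(\pi_n))\le\log 3+\log U+\P(\ord(\pi_n)>U)\log V+\P(\ord(\pi_n)>V)\cdot O(\sqrt{n\log n}).
\]
The middle term only needs $\P(\ord(\pi_n)>U)=o(1)$, which is the Erdős--Turán law, because $\log V=O(\log^2n)$. The polynomial tail is needed only in the last term, and there it holds: $\ord(\pi_n)\le n^{c(\pi_n)}$ gives $\P(\ord(\pi_n)>V)\le\P(c(\pi_n)>B\log n)=O(n^{-2})$ by Corollary~\ref{cor:tail_bounds}. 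You had both ingredients, including your count $n^{O(\log n)}$ of orders with $O(\log n)$ cycles, but discarded them in the ``refinement''. The paper's split of $(U,V]$ according to whether $p_n(m)\le e^{-2B\log^2n}$ implements exactly this three-tier bound.

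\textbf{The lower bound is incomplete rather than wrong.} The product bound you call the key step is never established, and the claim that primes $>n^\delta$ carry $(\frac12-\epsilon)\log^2n$ of $\log\ord(\pi_n)$ is only asserted. The rigorous version is a union bound on the Markov chain description (Fact~\ref{fact:markov_chain}). Given the past, each new cycle length is uniform on $\{1,\dots,Z^{(n)}_{j-1}\}$, so it is divisible by a fixed $q$ with probability at most $1/q$. Summing over the $\ell^{\omega(m)}$ ways to assign the prime-power factors of $m$ to the $\ell$ cycles gives $\P(c(\pi_n)=\ell,\ m\mid\ord(\pi_n))\le\ell^{\omega(m)}/m$ (Lemma~\ref{lm:cycles_and_primes}).

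Two further controls are needed:
\begin{itemize}
\item \emph{Cycle cutoff.} You want a pointwise bound $p_n(m)\le e^{-(1/2-\epsilon)\log^2n}$, so the cutoff must be $\ell\le B\log^2n$, where the tail is $e^{-\Omega(\log^2 n)}$. A cutoff $O(\log n)$ with only a polynomial tail is useless here.
\item \emph{Few prime factors.} You need $\omega(m)$ small for typical $m$, so that $(B\log^2n)^{\omega(m)}=e^{o(\log^2n)}$. The paper takes this from the Erdős--Turán result $\omega(\ord(\pi_n))\le2\log n\log\log n$ with probability $1-o(1)$.
\end{itemize}
Your restriction to large primes would also handle the second point. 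A cycle length $\le n$ has fewer than $1/\delta$ prime factors exceeding $n^\delta$, so typically there are only $O(\log n/\delta)$ such primes. Separately, the first-moment bound $\P(p^a\mid\ord(\pi_n))\le\sum_{k\le n/p^a}\frac{1}{kp^a}$ shows that primes $\le n^\delta$ contribute $O(\delta\log^2n)$ to $\E\log\ord(\pi_n)$. That computation, however, has to actually be carried out.
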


Our results show that, for $q > 1$, the behaviour of the Rényi $q$-entropy of $\ord(\pi_n)$ is tightly governed by arithmetic properties of $n$. In particular, the dominant contribution comes from values of the order that have a rather special arithmetic structure. These values lie close to $n$, far below the typical range of the order. In contrast, in the case of Shannon entropy, the bulk of the contribution comes exactly from the range predicted by the Erdős--Turán law.

Roughly speaking, our strategy for estimating $p_n(m)$ is analytic and draws inspiration from the anatomy of integers and permutations. We use distributional results about the order and the number of cycles to reduce matters to arithmetic considerations about $m$ and $n$. These issues are then addressed using tools from analytic number theory such as classical results on the distribution of primes and estimates for variants of the divisor function. Our approach to estimating the Rényi entropy crucially exploits the fact that the cycle lengths of a random permutation can be generated via a Markov process, a consequence of which is a recursive formula for $p_n(m)$. Furthermore, a key role is played by bounds for the number of divisors in short intervals. For a more detailed overview of our arguments, we refer the reader to Section \ref{sec:outline}.

\subsection{Organisation}\label{subsec:organisation}

The rest of the paper is organised as follows. In Section \ref{sec:prelim}, we record and develop various preliminary results necessary for the later arguments. Section \ref{sec:outline} contains an outline of the subsequent arguments. In Section \ref{sec:min_entropy}, we prove our results concerning the min-entropy, i.e.\ the most probable order, namely Theorems \ref{thm:asymp_for_max}, \ref{thm:equality_case} and Corollary \ref{cor:min_ent_asymp}. Section \ref{sec:collision_entropy} is devoted to the collision entropy and contains the proofs of Theorems \ref{thm:collision_prob_upper_bound}, \ref{thm:collision_prob_explicit} and \ref{thm:collision_prob_average}. In fact, we deduce these results from a more general ``master theorem'' which gives an expression for the collision probability in terms of an arithmetic quantity tied to $n$. Finally, in Section \ref{sec:shannon_entropy}, we address the Shannon entropy by proving Theorem \ref{thm:shannon_entropy}. 

\subsection{Notation}\label{subsec:notation}

We use standard asymptotic notation. Given quantities $A$ and $B$, we write $A \ll B$ to mean $A \leq O(B)$, that is, there is an absolute constant $C > 0$ such that $|A| \leq C|B|$; this is equivalent to $B \gg A$, i.e.\ $B \geq \Omega(A)$. If $A \ll B$ and $A \gg B$ hold simultaneously, we write $A \asymp B$. For functions $f, g \colon \N \to \R$, we write $f(n) = o(g(n))$ and $f(n) \sim g(n)$ to mean $\lim_{n\to\infty}f(n)/g(n) = 0$ and $\lim_{n\to\infty}f(n)/g(n) = 1$ respectively. We also write $f = \Theta(g)$ if $f = O(g)$ and $f = \Omega(g)$, which is equivalent to $f \asymp g$.

All logarithms are to base $e$ unless otherwise stated. For $k \in \N_0$, we write $\log^{(k)}$ for the $k$-fold iterated logarithm. Given $x > 0$, we also define the iterated logarithm of $x$, written $\log^*x$, to be the least $k$ such that $\log^{(k)}x \leq 1$. Whenever we write expressions involving a fixed number of iterated logarithms, such as $\log\log$, $\log\log\log$ etc., we implicitly assume that the relevant arguments are appropriately large.

As usual, we abbreviate the set $\{1,\ldots,n\}$ to $[n]$. We will primarily be concerned with intervals of integers, so it will be convenient to abuse the notation $(a,b]$ to denote the set of integers $x$ such that $a < x \leq b$, and similarly for $[a,b)$.

\subsection{Acknowledgements}\label{subsec:thanks}

This work was supported by the Croatian Science Foundation under the project number HRZZ-IP-2022-10-5116 (FANAP). The author would like to thank Rudi Mrazović for providing feedback on an earlier draft and Sean Eberhard for many useful comments and remarks.

\section{Preliminaries}\label{sec:prelim}

\subsection{Estimates for arithmetic functions}\label{subsec:nt_facts}

Given a positive integer $m$, we use the standard notation $\tau(m)$, $\sigma(m)$ and $\omega(m)$ to denote the number of positive divisors, sum of positive divisors and the number of distinct prime factors of $m$, respectively. An important role in the paper is played by the sum-of-reciprocal-divisors function
\[
    h(m) \vcentcolon= \sum_{d\mid m}\frac{1}{d}.
\]
This function represents the harmonic weight of the divisors of $m$ and is related to the sum of divisors function via $h(m) = \sigma(m)/m$. We will require estimates for these and several other arithmetic functions.

We start with the divisor bound, which follows from Theorem 2 in \cite[Chapter I.5]{tenenbaum}.

\begin{lemma}
\label{lm:divisor_bound}
We have $\tau(m) \leq \exp\Bigl(O\Bigl(\frac{\log m}{\log\log m}\Bigr)\Bigr)$. In particular, $\tau(m) \leq m^{o(1)}$.
\end{lemma}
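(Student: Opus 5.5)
The plan is to prove the classical divisor bound directly, using the multiplicative structure of $\tau$; the paper cites \cite[Chapter I.5]{tenenbaum} for this. Write $m = \prod_{p} p^{a_p}$, so that $\tau(m) = \prod_p (a_p+1)$. For a parameter $\varepsilon > 0$ we consider the ratio
\[
    \frac{\tau(m)}{m^{\varepsilon}} = \prod_{p\mid m}\frac{a_p+1}{p^{\varepsilon a_p}}
\]
and bound each factor separately. For primes $p \geq 2^{1/\varepsilon}$ we have $p^{\varepsilon a} \geq 2^{a} \geq a+1$, so those factors are at most $1$. For the remaining primes $p < 2^{1/\varepsilon}$ we use $p \geq 2$ and maximise $(a+1)/2^{\varepsilon a}$ over $a \geq 0$. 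This gives a bound of the form $\max_a (a+1)2^{-\varepsilon a} \leq C/\varepsilon$ for an absolute constant $C$, at least when $\varepsilon \leq 1$. There are at most $2^{1/\varepsilon}$ such primes, and hence
\[
    \tau(m) \leq m^{\varepsilon}\,(C/\varepsilon)^{2^{1/\varepsilon}}, \qquad\text{so}\qquad \log\tau(m) \leq \varepsilon\log m + 2^{1/\varepsilon}\log(C/\varepsilon).
\]

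Next we optimise over $\varepsilon$. Choose $\varepsilon = \frac{2\log 2}{\log\log m}$, so that $2^{1/\varepsilon} = (\log m)^{1/2}$. The first term then becomes $O(\log m/\log\log m)$. The second term is $(\log m)^{1/2}\cdot O(\log\log\log m)$, which is $o(\log m/\log\log m)$. Together these give $\tau(m) \leq \exp(O(\log m/\log\log m))$ for $m$ large. Small $m$ are absorbed into the implied constant, keeping in mind the paper's convention that iterated logarithms are only evaluated for sufficiently large arguments. The consequence $\tau(m) \leq m^{o(1)}$ is then immediate, since $\log\tau(m)/\log m = O(1/\log\log m) \to 0$.

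The only step needing care is the estimate $\max_{a\geq 0}(a+1)2^{-\varepsilon a} \ll 1/\varepsilon$. For this, put $x = \varepsilon a\log 2$; then $(a+1)2^{-\varepsilon a} \leq (1 + x/(\varepsilon\log 2))e^{-x}$, which is at most $1 + 1/(e\,\varepsilon\log 2)$. This is a routine calculation, so there is no real obstacle. Everything else is bookkeeping of the choice of $\varepsilon$.
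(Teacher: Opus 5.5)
Your proof is correct. The comparison $\tau(m)/m^{\varepsilon}=\prod_{p\mid m}(a_p+1)p^{-\varepsilon a_p}$, the split of the primes at $2^{1/\varepsilon}$, the bound $\max_{a\geq 0}(a+1)2^{-\varepsilon a}\leq 1+1/(e\varepsilon\log 2)$ and the choice $\varepsilon=2\log 2/\log\log m$ all check out.

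The paper gives no proof of its own; it simply cites \cite[Chapter I.5]{tenenbaum}, whose proof is essentially your argument. There, a slightly more careful choice $\varepsilon=(1+o(1))\log 2/\log\log m$ yields the sharp constant $\log 2$ in place of your $2\log 2$, which makes no difference for an $O(\cdot)$ statement.
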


Besides a bound on the number of divisors, we will also require a bound on their sum. The following result is contained in Theorem 5 of \cite[Chapter I.5]{tenenbaum}; here and in what follows, $\gamma \approx 0.577$ denotes the Euler--Mascheroni constant.

\begin{lemma}
\label{lm:sum_of_divisors}
We have $\sigma(m) \leq (1+o(1))e^{\gamma}m\log\log m$.
\end{lemma}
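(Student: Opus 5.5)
Lemma \ref{lm:sum_of_divisors} is a classical result (Gronwall's theorem), and it suffices to prove it directly. The plan is to write $\sigma(m)/m$ as an Euler product over the primes dividing $m$, bound it by the full product $\prod_{p\mid m}(1-1/p)^{-1}$, and then use that this product is largest when the prime divisors of $m$ are the smallest primes. Mertens' third theorem then converts the resulting product into $e^{\gamma}\log\log m$.

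First, by multiplicativity,
\[
    \frac{\sigma(m)}{m} = h(m) = \prod_{p^a\parallel m}\Bigl(1+\frac1p+\cdots+\frac1{p^a}\Bigr) < \prod_{p\mid m}\Bigl(1-\frac1p\Bigr)^{-1}.
\]
Let $k = \omega(m)$. Since $(1-1/p)^{-1}$ is decreasing in $p$, the right-hand side is at most $\prod_{i\le k}(1-1/p_i)^{-1}$, where $p_i$ denotes the $i$-th prime. Since $m \geq p_1\cdots p_k$, we have $\theta(p_k) = \sum_{i\le k}\log p_i \leq \log m$. By Chebyshev's bound $\theta(x)\gg x$, this gives $p_k \ll \log m$. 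For the sharper statement we need $\log p_k \le (1+o(1))\log\log m$; this follows from the prime number theorem, which gives $\theta(x)\sim x$ and hence $p_k \le (1+o(1))\log m$, though the weaker $p_k\ll\log m$ already suffices.

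Next, apply Mertens' theorem in the form $\prod_{p\le x}(1-1/p)^{-1} = (1+o(1))e^{\gamma}\log x$ with $x = p_k$. This gives
\[
    \frac{\sigma(m)}{m} \le (1+o(1))e^{\gamma}\log p_k \le (1+o(1))e^{\gamma}\bigl(\log\log m + O(1)\bigr).
\]
As $m\to\infty$, the $O(1)$ term is absorbed into the $o(1)$ factor.

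It remains to handle $m$ with $\omega(m)$ bounded, or more generally with $p_k$ small. In that case the product is bounded by $e^{\gamma}\log p_k \cdot O(1)$ for bounded $p_k$, and $\log\log m\to\infty$, so the bound still holds with room to spare. Formally, if $p_k\le x_0$ for a fixed $x_0$, the product is $O(1) = o(\log\log m)$. Otherwise $p_k\to\infty$ and Mertens' theorem applies with its $o(1)$ error, and monotonicity of $\log p_k$ in $k$ keeps the estimate uniform.

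The only step with real content is Mertens' product theorem with the constant $e^{\gamma}$, which I would cite rather than reprove.
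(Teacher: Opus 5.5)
Your proof is correct. The paper gives no argument of its own and simply cites Theorem 5 of Tenenbaum's Chapter I.5 (Gronwall's theorem). What you have written is the classical proof behind that citation: the bound $\sigma(m)/m < m/\varphi(m) = \prod_{p\mid m}(1-1/p)^{-1}$, comparison with the first $\omega(m)$ primes, Chebyshev's bound giving $p_{\omega(m)} \ll \log m$, and Mertens' product formula with its exact constant $e^{\gamma}$. As you note yourself, the prime number theorem is unnecessary since $\log p_k \le \log\log m + O(1)$ suffices, and your split into bounded versus unbounded $p_k$ correctly handles the uniformity of the $o(1)$.
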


We will actually use Lemma \ref{lm:sum_of_divisors} in the form of the following corollary, which provides an estimate for $h(m)$.

\begin{corollary}
\label{cor:reciprocal_divisors}
We have $h(m) \leq (1+o(1))e^{\gamma}\log\log m$.
\end{corollary}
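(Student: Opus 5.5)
This is an immediate consequence of Lemma \ref{lm:sum_of_divisors}, using the identity $h(m) = \sigma(m)/m$. The plan is simply to record that identity and divide through.

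First, I will verify the identity. The map $d \mapsto m/d$ is a bijection on the set of positive divisors of $m$. Hence
\[
    h(m) = \sum_{d\mid m}\frac{1}{d} = \sum_{d\mid m}\frac{1}{m/d} = \frac{1}{m}\sum_{d\mid m}d = \frac{\sigma(m)}{m}.
\]

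Second, I will divide the bound of Lemma \ref{lm:sum_of_divisors} by $m$. This gives $h(m) \leq (1+o(1))e^{\gamma}\log\log m$ as $m\to\infty$. The $o(1)$ term is the same one as in the lemma, since dividing by $m$ does not affect it.

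There is no real obstacle. The only point to note is the convention that $\log\log m$ is used only for $m$ large enough that the expression makes sense and is positive. For the finitely many small $m$ the statement is read asymptotically, as stipulated in the paper's notation section, so nothing further needs checking.
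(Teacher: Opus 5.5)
Your proof is correct and matches the paper's argument: the paper notes the identity $h(m)=\sigma(m)/m$ and obtains the corollary by dividing the bound of Lemma~\ref{lm:sum_of_divisors} by $m$, exactly as you do. Your divisor-pairing verification of the identity is the standard one, which the paper leaves implicit.
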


It is a standard fact that the number of distinct prime factors obeys the estimate
\[
    \omega(m) \leq (1+o(1))\frac{\log m}{\log\log m};
\]
see e.g.\ Theorem 3 (i) in \cite[Chapter I.5]{tenenbaum}. We will need such an estimate with a good error term. Specifically, we will use the following result, which appears as \cite[Theorem 12]{robin}, and is derived from (an explicit version of) the prime number theorem.

\begin{lemma}
\label{lm:prime_divisor_bound}
We have
\[
    \omega(m) \leq \frac{\log m}{\log\log m} + O\Bigl(\frac{\log m}{(\log\log m)^2}\Bigr).
\]
\end{lemma}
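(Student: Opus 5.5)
The statement immediately above is Lemma \ref{lm:prime_divisor_bound}, the bound on $\omega(m)$ with error term $O(\log m/(\log\log m)^2)$. The plan is to reduce to primorials and then apply the prime number theorem with a suitable error term.

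First, the reduction. Let $k = \omega(m)$ and let $p_1 < p_2 < \cdots$ denote the primes. Since $m$ has $k$ distinct prime factors, $m \ge p_1 p_2\cdots p_k = e^{\theta(p_k)}$, where $\theta(x) = \sum_{p\le x}\log p$. The function $t \mapsto t/\log t$ is increasing for $t \ge e$. Hence it suffices to prove the bound for the primorial $N_k = p_1\cdots p_k$, that is,
\[
k \le \frac{\log N_k}{\log\log N_k} + O\Bigl(\frac{\log N_k}{(\log\log N_k)^2}\Bigr).
\]
The same holds for the right-hand side of the lemma as a whole, since $t/\log t + Ct/\log^2 t$ is also increasing for large $t$. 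Small $m$ are absorbed into the implied constant.

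Second, apply the prime number theorem in the forms $\theta(x) = x + O(x/\log x)$ and $\pi(x) = x/\log x + O(x/\log^2 x)$. Set $x = p_k$. Then $k = \pi(x)$ and $L := \log N_k = \theta(x) = x(1+O(1/\log x))$. This gives $\log L = \log x + O(1/\log x)$, and therefore
\[
\frac{L}{\log L} = \frac{x}{\log x}\bigl(1 + O(1/\log x)\bigr) = \frac{x}{\log x} + O\Bigl(\frac{x}{\log^2 x}\Bigr).
\]
Comparing with $k = \pi(x) = x/\log x + O(x/\log^2 x)$ yields $k = L/\log L + O(x/\log^2 x)$. Finally, $x \asymp L$ and $\log x \asymp \log L$, so the error is $O(L/(\log L)^2)$, which is the claimed bound.

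There is no serious obstacle. The only points needing care are the monotonicity step that transfers the bound from primorials to general $m$, and checking that a weak form of the prime number theorem suffices. Relative errors of size $O(1/\log x)$ in both $\theta$ and $\pi$ are enough, so no strong error term is required. One could instead cite \cite[Theorem 12]{robin} directly.
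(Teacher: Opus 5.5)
Your proof is correct and is essentially the argument behind the result the paper cites, \cite[Theorem 12]{robin}: reduce to primorials $p_1\cdots p_k$ using monotonicity of $t\mapsto t/\log t + Ct/\log^2 t$, then compare $k=\pi(p_k)$ with $\vartheta(p_k)/\log\vartheta(p_k)$ via the prime number theorem with an error term. The only difference is that Robin uses explicit estimates for $\vartheta(p_k)$ to obtain an explicit constant, which the paper does not need. You are also right that the input $\vartheta(x)=x+O(x/\log x)$ is genuinely needed, and not merely $\vartheta(x)\sim x$: elementary bounds such as $p_j\ge j$ only give an error of size $O(\log m\log\log\log m/(\log\log m)^2)$, which Remark \ref{rem:large_orders} notes is too weak for the paper's purposes.
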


We will require Chebyshev's estimates (see e.g.\ Theorem 10 and Corollary 10.1 in \cite[Chapter I.2]{tenenbaum}). Recall that the prime counting function and the Chebyshev functions are defined as
\[
    \pi(x) \vcentcolon= \sum_{p\leq x}1, \quad \vartheta(x) \vcentcolon= \sum_{p\leq x}\log p, \quad \psi(x) \vcentcolon= \sum_{p \leq x}\lfloor\log_px\rfloor\log p.
\]
Here and throughout, the variable $p$ denotes a prime.

\begin{lemma}
\label{lm:chebyshev}
We have $\vartheta(x) \asymp \psi(x) \asymp x$ and $\pi(x) \asymp x/\log x$. In particular,
\[
    \prod_{p\leq x}p = \exp(\Theta(x)), \quad \lcm(1,\ldots,\lfloor x\rfloor) = \exp(\Theta(x)).
\]
\end{lemma}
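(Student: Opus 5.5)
The statement to prove is Lemma \ref{lm:chebyshev}, which is the classical Chebyshev estimates. I would argue directly rather than cite the prime number theorem. The plan is to obtain the bounds for $\psi$ first, then transfer them to $\vartheta$ and $\pi$.

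\emph{Upper bound.} First note that $\vartheta(x)\le\psi(x)$ trivially. Next, the product of all primes in $(m+1,2m+1]$ divides $\binom{2m+1}{m}\le 4^m$, which gives $\vartheta(2m+1)-\vartheta(m+1)\le m\log 4$. Induction on $m$ then yields $\vartheta(x)\le x\log 4$. To pass to $\psi$, write
\[
\psi(x)=\sum_{k\ge1}\vartheta(x^{1/k}) = \vartheta(x)+O(\sqrt{x}\log^2 x),
\]
since only $k\le\log_2 x$ terms are nonzero and each of the terms with $k\ge2$ is at most $\vartheta(\sqrt x)\ll\sqrt x$. Hence $\psi(x)\ll x$.

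\emph{Lower bound.} The exponent of $p$ in $\binom{2m}{m}$ is $\sum_j(\lfloor 2m/p^j\rfloor-2\lfloor m/p^j\rfloor)\le\lfloor\log_p 2m\rfloor$. It follows that $\binom{2m}{m}$ divides $\lcm(1,\ldots,2m)=e^{\psi(2m)}$. Combined with $\binom{2m}{m}\ge 4^m/(2m+1)$, this gives $\psi(2m)\ge m\log4-\log(2m+1)$, and therefore $\psi(x)\gg x$ for all large $x$. The displayed relation between $\psi$ and $\vartheta$ then transfers this lower bound to $\vartheta$.

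\emph{Prime counting function.} For $\pi$, the upper bound comes from
\[
\vartheta(x)\ge\sum_{\sqrt x<p\le x}\log p\ge \tfrac12\log x\,(\pi(x)-\sqrt x),
\]
which yields $\pi(x)\ll x/\log x$. The lower bound follows from $\pi(x)\ge\vartheta(x)/\log x$.

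\emph{The "in particular" statements.} These are simply restatements of the bounds: $\prod_{p\le x}p=e^{\vartheta(x)}$ and $\lcm(1,\ldots,\lfloor x\rfloor)=e^{\psi(x)}$.

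The only mildly delicate point is the lower bound for small $x$, where the additive $\log(2m+1)$ loss is comparable to the main term. This is handled by observing that $\psi(x)\ge\log2>0$ for all $x\ge2$, which absorbs the bounded range into the implied constant.
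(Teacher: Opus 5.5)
Your argument is correct. The paper does not prove this lemma itself: it cites the classical Chebyshev estimates from Tenenbaum's textbook, and the ``in particular'' clauses are the identities $\prod_{p\le x}p=e^{\vartheta(x)}$ and $\lcm(1,\ldots,\lfloor x\rfloor)=e^{\psi(x)}$, which you also note.

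You instead give the standard self-contained elementary proof:
\begin{itemize}
\item the upper bound for $\vartheta$ comes from the fact that the primes in $(m+1,2m+1]$ divide $\binom{2m+1}{m}\le 4^m$;
\item the lower bound for $\psi$ comes from $\binom{2m}{m}\mid\lcm(1,\ldots,2m)$ together with $\binom{2m}{m}\ge 4^m/(2m+1)$;
\item the transfer to $\vartheta$ uses $\psi(x)-\vartheta(x)\le \vartheta(\sqrt{x})\log_2 x$, and the transfer to $\pi$ uses the split of the primes at $\sqrt{x}$.
\end{itemize}
This makes the lemma independent of the reference, at the cost of length. For a paper whose focus lies elsewhere, citing these well-known bounds is the natural choice.

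Two cosmetic remarks. First, your error term is actually $O(\sqrt{x}\log x)$; the $\log^2 x$ is harmless. Second, the induction giving $\vartheta(n)\le n\log 4$ also needs the even step $\vartheta(2m)=\vartheta(2m-1)$ for $m\ge 2$, together with the trivial base cases, which is routine. As you observe, all the $\asymp$ statements are to be read for $x\ge 2$.
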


We will also require Mertens' second theorem (see e.g.\ Theorem 9 in \cite[Chapter I.1]{tenenbaum}).

\begin{lemma}
\label{lm:mertens_second}
We have
\[
    \sum_{p \leq x}\frac{1}{p} = \log\log x + O(1).
\]
\end{lemma}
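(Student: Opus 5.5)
The statement just above is Mertens' second theorem, $\sum_{p\le x}1/p=\log\log x+O(1)$. I will derive it from Chebyshev's estimates (Lemma \ref{lm:chebyshev}) by partial summation. The first step is the intermediate result known as Mertens' first theorem,
\[
    \sum_{p\le x}\frac{\log p}{p}=\log x+O(1).
\]
To prove it, I start from Legendre's formula for $\log(N!)$ with $N=\lfloor x\rfloor$:
\[
    \log N! = \sum_{p^k\le N}\Bigl\lfloor \frac{N}{p^k}\Bigr\rfloor\log p.
\]
Stirling, or simply comparison with $\int_1^N\log t\,dt$, gives $\log N!=N\log N-N+O(\log N)$. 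On the right-hand side I drop the floors at a cost of $O(\psi(N))=O(N)$, using $\psi(x)\asymp x$. The prime-power terms with $k\ge2$ contribute at most
\[
    N\sum_p\frac{\log p}{p(p-1)}=O(N).
\]
Dividing by $N$ then yields $\sum_{p\le N}\log p/p=\log N+O(1)$, and replacing $N$ by $x$ changes things by $O(1/x)$.

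Next I apply partial summation. Set $A(t)=\sum_{p\le t}\log p/p=\log t+R(t)$ with $|R(t)|\le C$ for $t\ge2$, and $A(t)=0$ for $t<2$. Then
\[
    \sum_{p\le x}\frac1p=\int_{2^-}^x\frac{dA(t)}{\log t}=\frac{A(x)}{\log x}+\int_2^x\frac{A(t)}{t\log^2t}\,dt .
\]
The boundary term equals $1+O(1/\log x)$. Substituting $A(t)=\log t+R(t)$ into the integral gives
\[
    \int_2^x\frac{dt}{t\log t}=\log\log x-\log\log 2,
\]
together with a remainder bounded by $C\int_2^\infty\frac{dt}{t\log^2t}<\infty$. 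Summing these pieces gives $\log\log x+O(1)$. The step needing most care is the first one, namely controlling the floor and prime-power errors; both are absorbed by the upper bound $\psi(x)\ll x$ and the convergence of $\sum_p\log p/p^2$.
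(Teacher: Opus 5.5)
Your argument is correct: it is the standard derivation of Mertens' second theorem, namely Mertens' first theorem obtained from Legendre's formula for $\log N!$ and the upper bound $\psi(x)\ll x$, followed by partial summation, and all error terms are handled properly. The paper gives no proof of its own here; it cites this classical result from Tenenbaum, where it is proved along essentially these lines.
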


Putting together Lemmas \ref{lm:prime_divisor_bound}, \ref{lm:chebyshev} and \ref{lm:mertens_second}, we obtain the following.

\begin{corollary}
\label{cor:rec_prime_div}
For any $m \in \N$, we have
\[
    \sum_{p\mid m}\frac{1}{p} \leq \log\log\log m + O(1).  
\]
\end{corollary}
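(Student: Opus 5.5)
The plan is to bound the sum over prime divisors of $m$ by comparing it with the sum over the smallest primes. Since $1/p$ is decreasing in $p$, the sum $\sum_{p\mid m}1/p$ is at most the sum of $1/p$ over the first $\omega(m)$ primes. If the $k$-th prime is denoted $p_k$, this gives
\[
    \sum_{p\mid m}\frac{1}{p} \leq \sum_{p \leq p_{\omega(m)}}\frac{1}{p} = \log\log p_{\omega(m)} + O(1)
\]
by Lemma \ref{lm:mertens_second}. (We may assume $m$ is large and $\omega(m)\geq 1$; small $m$ are absorbed into the $O(1)$.)

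Next I would bound $p_{\omega(m)}$ using Chebyshev's estimate from Lemma \ref{lm:chebyshev}. Since $\pi(x) \asymp x/\log x$, the $k$-th prime satisfies $p_k \ll k\log k$. Set $k = \omega(m)$. Lemma \ref{lm:prime_divisor_bound} gives $k \ll \log m/\log\log m$, so
\[
    p_k \ll \frac{\log m}{\log\log m}\cdot\log\log m = \log m.
\]
Hence $\log\log p_k \leq \log\log(C\log m) = \log\log\log m + O(1)$, which is the claim.

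A cruder alternative avoids counting primes. The product of the first $k$ primes divides into $m$ in the sense $\prod_{i\leq k}p_i \leq m$, and by Lemma \ref{lm:chebyshev} this product is $e^{\Theta(p_k)}$. This directly gives $p_k \ll \log m$.

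There is no serious obstacle; the only care needed is that the $O(1)$ comes out additive rather than multiplicative. This works because a constant factor in $p_k \ll \log m$ only shifts $\log\log$ by $O(1)$. The refined error term in Lemma \ref{lm:prime_divisor_bound} is not actually needed for this corollary. Either of the two routes above, through Lemma \ref{lm:prime_divisor_bound} or through the primorial, suffices.
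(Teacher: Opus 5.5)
Your proof is correct and follows the paper's argument essentially verbatim: you compare with the first $\omega(m)$ primes, bound the $\omega(m)$-th prime by $O(\log m)$ via Lemma \ref{lm:prime_divisor_bound} and Chebyshev, and then apply Mertens' second theorem. Your primorial alternative ($\vartheta(p_k) \leq \log m$) is also valid and slightly more economical, since it avoids the bound on $\omega(m)$ altogether.
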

\begin{proof}
    Let $p_1 < \ldots < p_k$ be the distinct prime factors of $m$. For each $j \in \N$, let $q_j$ denote the $j$-th smallest prime. By Lemma \ref{lm:prime_divisor_bound}, we have $k \ll \log m/\log\log m$. By Lemma \ref{lm:chebyshev}, we have $k = \pi(q_k) \gg q_k/\log q_k$, whence $q_k \ll k\log k \ll \log m$. Since $p_j \geq q_j$ for all $j \in [k]$, it follows using Lemma \ref{lm:mertens_second} that
    \[
        \sum_{j=1}^{k}\frac{1}{p_j} \leq \sum_{j=1}^{k}\frac{1}{q_j} \leq \log\log q_k + O(1) \leq \log\log\log m + O(1),
    \]
    as required.
\end{proof}

The following simple lemma concerns the number of divisors in a short interval. It is based on the observation that numbers that are very close together cannot have many common divisors, and hence their least common multiple must be large. Several similar results appear in the literature, for example \cite[Theorem 3.1]{gabdullin} or \cite[Theorem 1]{granville-jimenez-urroz}. As usual, $\tau(m;a,b)$ stands for the number of divisors of $m$ in the interval $(a,b]$.

\begin{lemma}
\label{lm:div_in_int_simple}
If $a, m, t, r \in \N$ are such that $mt^{r^2} \leq a^r$, then $\tau(m;a,a+t) < r$.
\end{lemma}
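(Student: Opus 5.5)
Suppose for contradiction that $m$ has $r$ distinct divisors $d_1 < d_2 < \cdots < d_r$ in $(a, a+t]$. The idea is to show that $L = \lcm(d_1,\ldots,d_r)$ is large. Since $L \mid m$, this gives $L \leq m$. We then derive a lower bound for $L$ that contradicts the hypothesis $mt^{r^2} \leq a^r$.

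The key observation is that for $i \neq j$ we have $\gcd(d_i,d_j) \mid d_i - d_j$, and $0 < |d_i - d_j| < t$, so $\gcd(d_i,d_j) < t$. The standard inequality for least common multiples is
\[
    \lcm(d_1,\ldots,d_r) \;\geq\; \frac{\prod_{i} d_i}{\prod_{i<j}\gcd(d_i,d_j)}.
\]
It can be checked prime by prime. Fix a prime $p$ and let the exponents $v_p(d_i)$ be sorted as $e_1 \geq e_2 \geq \cdots \geq e_r$. The left side has exponent $e_1$. The right side has exponent
\[
    \sum_i e_i - \sum_{i<j}\min(e_i,e_j).
\]
Pairing each $e_j$ with $j \geq 2$ against the pair $(1,j)$, whose minimum is $e_j$, shows this exponent is at most $e_1$.

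Combining these facts gives
\[
    m \;\geq\; L \;\geq\; \frac{a^r}{t^{\binom{r}{2}}} \;>\; \frac{a^r}{t^{r^2}},
\]
which contradicts $mt^{r^2} \leq a^r$. There are two edge cases to dispose of. If $t = 1$, then $t^{r^2} = 1$; but the interval $(a,a+1]$ contains only one integer, so $\tau(m;a,a+1) \leq 1 < r$ unless $r = 1$. If $r = 1$, the hypothesis reads $mt \leq a$, which forces $m < a+1$, so $m$ has no divisor exceeding $a$ and $\tau = 0 < 1$. For $t \geq 2$ and $r \geq 2$, the strict inequality $\binom{r}{2} < r^2$ gives $t^{\binom r2} < t^{r^2}$. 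This yields the strict inequality $m > a^r/t^{r^2}$ and hence the contradiction.

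There is no real obstacle here. The only step needing care is the prime-by-prime lcm inequality, and the bookkeeping is generous: the exponent $r^2$ in the statement is far more than the $\binom{r}{2}$ the argument needs.
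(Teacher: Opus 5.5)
Your proof is correct and follows the paper's argument: bound $m \ge \lcm(d_1,\ldots,d_r) \ge \prod_j d_j / \prod_{i<j}\gcd(d_i,d_j)$, then use $\gcd(d_i,d_j) \le d_j - d_i < t$ to conclude $m > a^r/t^{\binom{r}{2}} \ge a^r/t^{r^2}$. The paper gets the strict inequality from $\prod_j d_j > a^r$ (since each $d_j \ge a+1$), so your separate treatment of $t=1$ and $r=1$ is not needed. You also spell out the prime-by-prime proof of the lcm inequality, which the paper leaves implicit.
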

\begin{proof}
    Suppose not, and let $d_1, \ldots, d_r$ be divisors of $m$ such that $a < d_1 < \ldots < d_r \leq a + t$. Then
    \[
        m \geq \mathrm{lcm}(d_1,\ldots,d_r) \geq \frac{\prod_{j=1}^{r}d_j}{\prod_{1\leq i < j\leq r}\gcd(d_i,d_j)} \geq \frac{\prod_{j=1}^{r}d_j}{\prod_{1\leq i < j\leq r}(d_j-d_i)} > \frac{a^r}{t^{\binom{r}{2}}},
    \]
    which is a contradiction.
\end{proof}

We end this subsection with another estimate for the number of divisors in short intervals, which might be of independent interest. This time the precise location of the interval plays no role; only its length matters.

\begin{lemma}
\label{lm:div_in_int_hard}
There exists an absolute constant $A > 0$ such that the following holds. For any $a,m,t \in \N$, we have $\tau(m;a,a+t) \leq (A\log m/\ell)^{\ell}$, where $\ell = \min(\lceil A\log t/\log\log t\rceil, \omega(m))$.
\end{lemma}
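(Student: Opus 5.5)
The case $\ell = \omega(m)$ is elementary, so I would dispose of it first and spend the effort on the case $\ell = \lceil A\log t/\log\log t\rceil < \omega(m)$. The approach is to bound $\tau(m;a,a+t)$ by the number of divisors of $m$ with a bounded number of prime factors. The companion step, that a divisor in the interval is determined by such a divisor, is the part I have not established.

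\emph{The case $\ell = \omega(m)$.} Write $m = \prod_{p\mid m}p^{\alpha_p}$ and set $k = \omega(m)$. By AM--GM,
\[
\tau(m;a,a+t) \le \tau(m) = \prod_{p\mid m}(\alpha_p+1) \le \Bigl(\frac{1}{k}\sum_{p\mid m}(\alpha_p+1)\Bigr)^{k}.
\]
Since $\sum_{p\mid m}\alpha_p \le \log m/\log 2$ and $k \le \log m/\log 2$, the right-hand side is at most $(2\log m/(k\log 2))^{k}$. This is $(A\log m/\ell)^{\ell}$ for any $A \ge 2/\log 2$. The same computation gives a general counting bound: for $\ell \le \Omega(m)$, the number of divisors $u \mid m$ with $\Omega(u) \le \ell$ is at most the number of multisets of size $\le \ell$ from $\Omega(m) \le \log m/\log 2$ prime factors. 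That is at most $\binom{\Omega(m)+\ell}{\ell} \le (e(\Omega(m)+\ell)/\ell)^{\ell} \le (A\log m/\ell)^{\ell}$.

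\emph{The main case.} The plan is an injective-type reduction. Fix the least element $d_0$ of $\mathcal D := \{d\mid m : a<d\le a+t\}$. For each $d\in\mathcal D$ put $g = \gcd(d,d_0)$, $u = d/g$ and $v = d_0/g$. Then $u, v$ are coprime divisors of $m$, $g \mid d-d_0$, so $g \le t$ when $d\ne d_0$, and $g(u-v) = d-d_0 \in [0,t)$.

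The aim is to show that $d$ is determined, up to a factor $t^{O(1/\log\log t)}$ that can be absorbed into $A^\ell$, by a divisor of $m$ with at most $O(\log t/\log\log t)$ prime factors. The mechanism I would try is to strip common factors iteratively, as in Lemma~\ref{lm:div_in_int_simple}. Each time one passes from $d$ to a nearby divisor, the "new" part has size at most $t$, since it divides a difference of elements of $\mathcal D$. A number $\le t$ has at most $\omega \le (1+o(1))\log t/\log\log t$ prime factors, by Lemma~\ref{lm:prime_divisor_bound}. So the whole configuration is controlled by at most $\ell$ prime-power choices from the factorisation of $m$. The counting bound above then gives $(A\log m/\ell)^\ell$.

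\emph{Expected obstacle.} The main obstacle is making this injection rigorous, because the cofactor $v$ need not be small. The first thing I would try is a differencing argument: for $d,d'\in\mathcal D$, the number $d/\gcd(d,d')$ divides $d'\cdot\frac{d-d'}{\gcd(d,d')}$. Combined with the divisibility $\gcd(d,d')\mid d-d'$, this forces the "essentially new" prime factors of $d$ relative to $d'$ to come from a number of size at most $t$. One then encodes $d$ by $d_0$ together with the prime factors of $d-d_0$ that are used, which are at most $\omega(d-d_0) \ll \log t/\log\log t$ primes, with exponents bounded via $\Omega(m)\le \log m/\log 2$. If a direct injection fails, the fallback is to apply Lemma~\ref{lm:div_in_int_simple} on dyadic scales of $a$ relative to $t$. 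That handles the range $a \ge t^{C\ell}$, where it gives $O(\ell)$ divisors, and reserves the combinatorial encoding for smaller $a$, where $\mathcal D$ sits inside the divisors of $m$ below $t^{O(\ell)}$.
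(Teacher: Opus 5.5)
Your treatment of the case $\ell=\omega(m)$ is correct. The main case $\ell=\lceil A\log t/\log\log t\rceil<\omega(m)$, however, is not proved. The injection from divisors in $(a,a+t]$ to ``divisors with at most $\ell$ prime factors'' is never constructed, and the mechanisms you sketch do not supply it.

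\begin{itemize}
\item The differencing idea gives no control over the cofactor $d/\gcd(d,d_0)$, as you note yourself.
\item The first half of the fallback fails. Lemma~\ref{lm:div_in_int_simple} needs $a^r\ge mt^{r^2}$, a condition involving $m$, and $m$ is arbitrary here. So $a\ge t^{C\ell}$ alone gives nothing.
\item The second half of the fallback fails too. For small $a$, knowing that $\mathcal D$ lies among the divisors of $m$ below $t^{O(\ell)}$ does not by itself help. If $m$ is a primorial with $\log m\asymp\ell^2$, then all $2^{\omega(m)}=e^{\Theta(\ell^2/\log\ell)}$ divisors of $m$ lie below $t^{C\ell}$, while the target is $(A\log m/\ell)^{\ell}=e^{O(\ell\log\ell)}$.
\item Measuring prime factors by $\Omega$ is structurally wrong. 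Here $\ell$ primes counted with multiplicity can multiply to as little as $2^\ell=t^{o(1)}$, so they cannot pin down a point in an interval of length $t$. Also, divisors like $2^i3^j$ have $\omega=2$ but unbounded $\Omega$, so they escape your bound on divisors with $\Omega(u)\le\ell$.
\end{itemize}

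The missing idea is to split by $\omega(d)$ and use \emph{distinct} primes; no interaction between different divisors is needed.

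\emph{Divisors with $\omega(d)>\ell$.} Choose any $\ell$ distinct primes dividing $d$. Their product is at least the product of the first $\ell$ primes, which by Lemma~\ref{lm:chebyshev} is at least $t$ once $A$ is large. Two distinct multiples of a number $\ge t$ cannot both lie in $(a,a+t]$. Hence $d$ is determined by this $\ell$-subset of the primes of $m$, and there are at most $\binom{\omega(m)}{\ell}\le(e\omega(m)/\ell)^{\ell}$ such $d$.

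\emph{Divisors with $\omega(d)\le\ell$.} Count these directly by choosing $\ell$ primes of $m$ and their exponents. This gives at most $\sum_{i_1<\dots<i_\ell}\prod_j(\alpha_{i_j}+1)\le\frac{1}{\ell!}\bigl(\sum_i(\alpha_i+1)\bigr)^{\ell}$. That is exactly your AM--GM-type computation, and it is $\le(O(\log m)/\ell)^{\ell}$.

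Adding the two bounds gives the lemma.
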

\begin{proof}
    Let $m = \prod_{i=1}^{k}p_i^{\alpha_i}$ be the prime factorisation of $m$. Split the divisors $d$ of $m$ from $(a,a+t]$ into two groups: those with $\omega(d) \leq \ell$ and those with $\omega(d) > \ell$. If $\ell = \omega(m)$, then the latter group is empty. Otherwise, by Lemma \ref{lm:chebyshev}, the product of the $\ell$ smallest primes is at least $t$. Thus, for any $\ell$ primes from $\{p_1,\ldots,p_k\}$, there can be at most one divisor of $m$ in $(a,a+t]$ which is divisible by all of these primes. Hence, by Lemma \ref{lm:prime_divisor_bound}, the latter group of divisors has size at most
    \[
        \binom{k}{\ell} \leq \Bigl(\frac{ek}{\ell}\Bigr)^{\ell} \leq O\Bigl(\frac{\log m}{\ell\log\log m}\Bigr)^{\ell},
    \]
    which is even smaller than what we need. The former group, on the other hand, has size at most
    \[
        \sum_{1\leq i_1<\ldots<i_{\ell}\leq k}\prod_{j=1}^{\ell}(\alpha_{i_j}+1) \leq \frac{1}{\ell!}\Bigl(\sum_{i=1}^{k}(\alpha_i+1)\Bigr)^{\ell} \leq O\Bigl(\frac{1}{\ell}\sum_{i=1}^{k}\alpha_i\Bigr)^{\ell}.
    \]
    The conclusion now follows since $\sum_{i=1}^{k}\alpha_i$, the number of prime factors of $m$ counted with multiplicity, is $O(\log m)$ (see e.g.\ Theorem 3 (ii) in \cite[Chapter I.5]{tenenbaum}).
\end{proof}

\subsection{Cycle type and order of random permutations}\label{subsec:cycle_type}

It will be important to recall that the cycle type of a random permutation can be sampled as follows. Given a positive integer $n$, consider the Markov chain $(Z_j^{(n)})_{j\geq0}$ with state space $\N_0$, initial distribution $\P(Z_0^{(n)} = n) = 1$ and transition probabilities
\[
    \P(Z_{j+1}^{(n)}=u\mid Z_j^{(n)}=v) = \begin{cases}1/v & \text{if } u < v\\1 & \text{if } u = v = 0\\0 & \text{otherwise}\end{cases}.
\]
In other words, the Markov chain starts at $n$ and at each step it jumps to a uniformly random smaller non-negative integer; once it arrives at $0$, it remains there for all time. Denoting the time of arriving at $0$ by
\[
    T^{(n)} \vcentcolon= \min\{j\geq 0 \mid Z_j^{(n)} = 0\},
\]
we have the following well-known fact (see e.g.\ \cite[\S1.1]{arratia-barbour-tavare}).

\begin{fact}
\label{fact:markov_chain}
The cycle type of $\pi_n$ has the same distribution as the multiset 
\[
    \{Z_j^{(n)}-Z_{j+1}^{(n)}\mid 0 \leq j < T^{(n)}\}.
\]
In particular, $\ord(\pi_n)$ has the same distribution as $\lcm\{Z_j^{(n)}-Z_{j+1}^{(n)}\mid 0 \leq j < T^{(n)}\}$.
\end{fact}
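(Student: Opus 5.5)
The statement to prove is Fact \ref{fact:markov_chain}: the cycle type of $\pi_n$ has the law of the multiset of jumps of the chain $Z^{(n)}$, and hence $\ord(\pi_n)$ has the law of the lcm of those jumps. The plan is to show that the jumps follow the \emph{same sequential rule} as the cycles of $\pi_n$ when they are revealed one at a time in a canonical order. The second claim then follows at once, since $\ord(\pi)$ is a function of the cycle type alone, namely the lcm of the cycle lengths.

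To reveal the cycles, expose the cycle containing the smallest element of $[n]$. Then expose the cycle containing the smallest element not yet covered, and so on. Write $V_0=n$ and let $V_j$ be the number of elements not covered after $j$ cycles have been exposed. Let $L_{j+1}=V_j-V_{j+1}$ be the length of the $(j+1)$-th cycle.

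The key step is to show that, conditionally on the first $j$ cycles (as sets with their cyclic structure), $L_{j+1}$ is uniform on $\{1,\dots,V_j\}$.
\begin{itemize}
\item The restriction of $\pi_n$ to the $V_j$ uncovered elements is a uniformly random permutation of that set, since the map $\pi\mapsto$ (first $j$ cycles, remaining restricted permutation) is a bijection onto pairs.
\item It therefore suffices to show that in a uniform permutation of a $v$-element set, the cycle containing a fixed element $x$ has length $\ell$ with probability $1/v$ for each $1\le \ell\le v$.
\item To count: choose the ordered sequence $\pi(x),\pi^2(x),\dots,\pi^{\ell-1}(x)$ of distinct elements other than $x$, in $(v-1)(v-2)\cdots(v-\ell+1)$ ways, and then any permutation of the remaining $v-\ell$ elements, in $(v-\ell)!$ ways.
\item The total is $(v-1)!$ for every $\ell$, giving probability $(v-1)!/v!=1/v$.
\end{itemize}

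It follows that $(V_j)$ is a Markov chain started at $n$ that jumps from $v>0$ to a uniform element of $\{0,\dots,v-1\}$ and stops at $0$. These are exactly the transition probabilities of $Z^{(n)}$. Hence the whole sequence $(V_j)_{j\ge0}$ has the same law as $(Z^{(n)}_j)_{j\ge0}$, with $T^{(n)}$ corresponding to the number of cycles. Taking the multiset of increments, which is a measurable function of the path, gives the claim about cycle type.

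The only delicate point is the conditional uniformity in the first bullet: conditioning on the revealed cycles must leave the rest uniformly distributed. The bijection argument handles this.
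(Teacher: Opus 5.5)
Your argument is correct: revealing cycles in order of their smallest uncovered element, with the bijection giving a uniform restriction on the remaining set, is the standard proof that the successive cycle lengths follow the uniform-jump chain $Z^{(n)}$. The paper does not prove this fact itself; it cites it as well known (Arratia--Barbour--Tavar\'e, \S1.1), where essentially this sequential construction is the justification, so your proof supplies exactly the argument behind that reference.
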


Conditioning on the first step of $(Z_j^{(n)})_{j\geq0}$, we obtain the following expression for $p_n(m)$ of a recursive nature.

\begin{corollary}
\label{cor:recursion}
For any $m, n \in \N$ we have
\begin{equation}\label{eq:recursion}
    \P(\ord(\pi_n) = m) = \frac{1}{n}\sum_{\substack{0\leq x<n\\n-x\mid m}}\P(\lcm(\ord(\pi_x), n-x) = m).
\end{equation}
In particular, for any $m, n \in \N$ we have
\[
    \P(\ord(\pi_n) \mid m) \leq \frac{\tau(m)}{n}.
\]
\end{corollary}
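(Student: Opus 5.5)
We condition on the first step of the Markov chain of Fact \ref{fact:markov_chain}. By that fact, the multiset of cycle lengths of $\pi_n$ has the law of the multiset of decrements $Z_j^{(n)}-Z_{j+1}^{(n)}$. The first step $Z_1^{(n)}$ is uniform on $\{0,1,\ldots,n-1\}$, so the first decrement $n-x$ (with $x = Z_1^{(n)}$) occurs with probability $1/n$ for each $0 \leq x < n$.

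Conditionally on $Z_1^{(n)} = x$, the process $(Z_{j+1}^{(n)})_{j\geq 0}$ is a copy of the chain $(Z_j^{(x)})_{j\geq0}$ started at $x$, by the Markov property; the transition law depends only on the current state. Applying Fact \ref{fact:markov_chain} again, with $x$ in place of $n$, the remaining decrements have the law of the cycle type of $\pi_x$. Here $\pi_0$ is the empty permutation with order $1$, and the chain at $0$ produces no decrements. Hence, conditionally on $Z_1^{(n)}=x$, the order $\lcm$ of all decrements has the law of $\lcm(\ord(\pi_x), n-x)$.

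Averaging over $x$ gives
\[
    \P(\ord(\pi_n)=m) = \frac{1}{n}\sum_{0\leq x<n}\P(\lcm(\ord(\pi_x),n-x)=m).
\]
The terms with $n-x \nmid m$ vanish, since $\lcm(\cdot, n-x)=m$ forces $n-x \mid m$. This yields \eqref{eq:recursion}.

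For the inequality, we repeat the same conditioning with the event $\{\ord(\pi_n)\mid m\}$. Since $\ord(\pi_n) \mid m$ implies that every cycle length divides $m$, in particular the first decrement $n-x$ must divide $m$. Therefore
\[
    \P(\ord(\pi_n)\mid m) \leq \frac{1}{n}\#\{0\leq x<n : n-x\mid m\} \leq \frac{\tau(m)}{n},
\]
because $x \mapsto n-x$ is injective into the set of divisors of $m$.

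The only point requiring any care is the Markov-property identification of the tail of the chain with a fresh chain started at $x$, including the degenerate case $x=0$. This is immediate from the definition of the transition probabilities, so no real obstacle is expected.
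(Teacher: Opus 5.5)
Your proposal is correct and matches the paper's route, which it justifies only by the phrase ``conditioning on the first step'': you condition on the uniform first jump $Z_1^{(n)}=x$, use the Markov property to identify the remaining decrements with the cycle type of $\pi_x$ (taking $\ord(\pi_0)=1$), and for the divisibility bound you use that the first decrement $n-x$ must divide $m$. You obtain the $\tau(m)/n$ bound by repeating the conditioning directly rather than summing the identity over divisors of $m$, but this is the same argument.
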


Along with Corollary \ref{cor:recursion}, our main tools are two local limit laws for the joint distribution of the order and the number of cycles. We will use these results to show that the probability of achieving a particular order $m$ is small. In doing so, we rely on the following rough dichotomy: either $m$ is not too large, so there is little chance that each cycle length is a divisor of $m$, or $m$ is large, which forces the cycle lengths to be divisible by many large prime powers.

We deduce our first local limit law as a consequence of the following more general result. Before stating it, we quickly set up some terminology. For a permutation $\pi \in S_n$, we define $c(\pi)$ to be the number of cycles in $\pi$. For an arbitrary set $I \subseteq \N$, we say $\pi$ is \emph{$I$-restricted} if the length of each cycle in $\pi$ belongs to $I$.

\begin{lemma}
\label{lm:cycle_lengths}
For any $\ell, n \in \N$ and $I \subseteq \N$ we have
\[
    \P(c(\pi_n) = \ell,\ \text{$\pi_n$ is $I$-restricted}) \leq \frac{\Bigl(\sum_{i\in I}1/i\Bigr)^{\ell-1}}{n(\ell-1)!}.
\]
\end{lemma}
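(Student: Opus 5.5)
We prove the bound by an exact count via Cauchy's formula, grouping the cycles into ordered sequences. The number of permutations of $[n]$ with cycles of lengths $\lambda_1,\ldots,\lambda_\ell$, listed as an \emph{ordered} tuple, is $n!/\prod_j\lambda_j$ divided by $\ell!$ times a factor accounting for repeated lengths. Summing over all ordered tuples $(\lambda_1,\ldots,\lambda_\ell)\in I^\ell$ with $\lambda_1+\cdots+\lambda_\ell=n$ gives
\[
    \P(c(\pi_n)=\ell,\ \pi_n \text{ is $I$-restricted}) = \frac{1}{\ell!}\sum_{\substack{\lambda_1,\ldots,\lambda_\ell\in I\\ \lambda_1+\cdots+\lambda_\ell=n}}\prod_{j=1}^{\ell}\frac{1}{\lambda_j}.
\]
This is the standard identity: the coefficient of $x^n$ in $\frac{1}{\ell!}\bigl(\sum_{i\in I}x^i/i\bigr)^\ell$, obtained from the exponential formula. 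It can also be checked directly, since each permutation with multiplicities $m_i$ of length $i$ is counted $\ell!/\prod m_i!$ times among ordered tuples, matching Cauchy's formula $\prod_i 1/(i^{m_i}m_i!)$.

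Next we remove one factor $1/\lambda$ in exchange for a factor $1/n$. Since $\lambda_1+\cdots+\lambda_\ell=n$, we can write $1=\sum_j\lambda_j/n$ and insert this into each summand. By symmetry among the indices, the sum becomes
\[
    \frac{\ell}{\ell!\,n}\sum_{\substack{\lambda_1,\ldots,\lambda_\ell\in I\\ \sum\lambda_j=n}}\prod_{j=1}^{\ell-1}\frac{1}{\lambda_j},
\]
where the factor $\lambda_\ell$ has cancelled the weight $1/\lambda_\ell$. In this sum $\lambda_\ell$ is determined by $\lambda_1,\ldots,\lambda_{\ell-1}$; the constraints $\lambda_\ell\in I$ and $\lambda_\ell\ge1$ can only decrease the count. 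We therefore drop them, and each $\lambda_j$ for $j<\ell$ ranges freely over $I$. This gives the bound
\[
    \frac{1}{(\ell-1)!\,n}\Bigl(\sum_{i\in I}\frac1i\Bigr)^{\ell-1},
\]
which is exactly the claim.

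The only delicate point is the first step, the exact ordered-tuple identity. An alternative route is Fact \ref{fact:markov_chain}: the Markov chain produces an ordered sequence of cycle lengths $\lambda_1,\lambda_2,\ldots$ with probability $\prod_j 1/(n-\lambda_1-\cdots-\lambda_{j-1})$. The last factor equals $1/\lambda_\ell$, and the first equals $1/n$. Bounding the middle factors requires care, since $1/(\text{remaining})\not\le 1/\lambda_j$ in general. For this reason the symmetrisation via Cauchy's formula is the cleaner route, and it is the one I would use.
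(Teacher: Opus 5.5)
Your proof is correct. The paper gives no proof of this lemma: it cites it as a special case of Ford's Theorem 1.5, or as a straightforward generalisation of the $I=[n]$ argument of Lemma 4.1 in Acan et al. Your argument (Cauchy's formula over ordered tuples, then the symmetrisation $1=\sum_j\lambda_j/n$, which is equivalent to $n[z^n]F=[z^{n-1}]F'$ for $F=\frac{1}{\ell!}(\sum_{i\in I}z^i/i)^{\ell}$, then dropping the constraint on $\lambda_\ell$) is essentially that generalisation written out in full.
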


Lemma \ref{lm:cycle_lengths} is a special case of \cite[Theorem 1.5]{ford}, a general local limit law for counts of cycle lengths based on the Poisson heuristic. At the same time, it can be obtained by a straightforward generalisation of the argument behind \cite[Lemma 4.1]{acan-burnette-eberhard-schmutz-thomas}, which corresponds to the case $I = [n]$. Taking $I$ to be the set of divisors of $m$, we obtain our first local limit law.
\begin{corollary}
\label{cor:cycle_divisors}
For any $\ell, m, n \in \N$ with $\ell > 1$ we have
\[
    \P(c(\pi_n) = \ell,\ \ord(\pi_n) \mid m) \leq \frac{1}{n}\Bigl(\frac{eh(m)}{\ell-1}\Bigr)^{\ell-1}.
\]
\end{corollary}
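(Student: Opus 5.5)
This is a direct specialisation of Lemma \ref{lm:cycle_lengths}, with $I$ taken to be the set of positive divisors of $m$. A permutation satisfies $\ord(\pi_n) \mid m$ exactly when every cycle length divides $m$, which is the same as saying $\pi_n$ is $I$-restricted for $I = \{d \in \N : d \mid m\}$. For this choice of $I$ we have $\sum_{i\in I}1/i = \sum_{d\mid m}1/d = h(m)$. Lemma \ref{lm:cycle_lengths} therefore gives
\[
    \P(c(\pi_n) = \ell,\ \ord(\pi_n) \mid m) \leq \frac{h(m)^{\ell-1}}{n(\ell-1)!}.
\]

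It remains to replace the factorial by the exponential form in the statement. We use the elementary inequality $k! \geq (k/e)^k$ for $k \geq 1$, applied with $k = \ell - 1$; this is where the hypothesis $\ell > 1$ is needed. The inequality follows from $e^k = \sum_{j\geq0}k^j/j! \geq k^k/k!$. Substituting it gives
\[
    \frac{h(m)^{\ell-1}}{(\ell-1)!} \leq \Bigl(\frac{e\,h(m)}{\ell-1}\Bigr)^{\ell-1},
\]
which is exactly the bound claimed in Corollary \ref{cor:cycle_divisors}.

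Neither step presents a real obstacle. The only points to check are that the $I$-restricted condition coincides precisely with the event $\ord(\pi_n)\mid m$, and that $\ell>1$ avoids the degenerate expression $0^0/0$ on the right-hand side.
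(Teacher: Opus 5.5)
Your proof is correct and is exactly the paper's argument: apply Lemma \ref{lm:cycle_lengths} with $I$ the set of divisors of $m$ (so $I$-restrictedness is equivalent to $\ord(\pi_n)\mid m$ and $\sum_{i\in I}1/i = h(m)$), then use $(\ell-1)! \geq ((\ell-1)/e)^{\ell-1}$. One small correction: what the hypothesis $\ell>1$ actually avoids is the division by zero in $\bigl(eh(m)/(\ell-1)\bigr)^{\ell-1}$, not a ``$0^0/0$'' expression.
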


By taking $I = [n]$ in Lemma \ref{lm:cycle_lengths}, one can derive tail bounds for the number of cycles. We will use the following result, which is a special case of \cite[Theorem 1.7]{ford}. Since $\ord(\pi_n) \leq n^{c(\pi_n)}$, this result also provides an upper tail bound for $\ord(\pi_n)$; we will exploit this observation in several later instances.

\begin{corollary}
\label{cor:tail_bounds}
For $\lambda > 0$, define $Q(\lambda) \vcentcolon= \lambda\log\lambda - \lambda+1 \geq 0$. Then for any $0 < \lambda_1 \leq 1$ and $\lambda_2 > 1$ we have
\[
    \P(c(\pi_n) \leq \lambda_1\log n) \ll n^{-Q(\lambda_1)}, \quad \P(c(\pi_n) \geq \lambda_2\log n) \ll n^{-Q(\lambda_2/2)}.
\]
\end{corollary}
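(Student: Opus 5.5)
The plan is a Chernoff-type argument built on Lemma~\ref{lm:cycle_lengths} with $I = [n]$. With $L \vcentcolon= \sum_{i\leq n}1/i \leq \log n + 1$, that lemma gives
\[
    \P(c(\pi_n) = \ell) \leq \frac{L^{\ell-1}}{n(\ell-1)!} \qquad (\ell \geq 1).
\]
For any $x > 0$, summing over $\ell$ bounds the exponential moment:
\[
    \E\bigl[x^{c(\pi_n)-1}\bigr] \leq \frac{1}{n}\sum_{\ell\geq1}\frac{(xL)^{\ell-1}}{(\ell-1)!} = \frac{e^{xL}}{n} \leq e^{x}n^{x-1}.
\]
Both tails then follow from Markov's inequality applied to $x^{c(\pi_n)-1}$, with a suitable choice of $x$.

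\emph{Lower tail.} Take $x = \lambda_1 \in (0,1]$. On the event $c(\pi_n) \leq \lambda_1\log n$ we have $x^{c(\pi_n)-1} \geq x^{\lambda_1\log n - 1}$, since $x \leq 1$. Hence
\[
    \P(c(\pi_n)\leq\lambda_1\log n) \leq \lambda_1 e^{\lambda_1}\, n^{\lambda_1-1-\lambda_1\log\lambda_1} = \lambda_1e^{\lambda_1}\,n^{-Q(\lambda_1)}.
\]
The prefactor $\lambda_1 e^{\lambda_1}$ is at most $e$, so this is $\ll n^{-Q(\lambda_1)}$ uniformly in $\lambda_1$.

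\emph{Upper tail.} Take $x > 1$. On the event $c(\pi_n) \geq \lambda_2\log n$ we have $x^{c(\pi_n)-1} \geq x^{\lambda_2\log n - 1}$, which gives
\[
    \P(c(\pi_n)\geq\lambda_2\log n) \leq x e^{x}\, n^{x-1-\lambda_2\log x}.
\]
The optimal choice $x = \lambda_2$ yields $\lambda_2 e^{\lambda_2} n^{-Q(\lambda_2)}$. The prefactor here grows with $\lambda_2$, and removing it is where the weakening to $Q(\lambda_2/2)$ is spent.

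\emph{Main obstacle.} The hardest step is making the upper-tail bound uniform in $\lambda_2$, so that the implied constant is absolute. I would first dispose of the case $\lambda_2\log n > n$, where the probability is $0$ because $c(\pi_n) \leq n$. So I may assume $\lambda_2 \leq n/\log n$. For $\lambda_2$ large I would choose $x = \lambda_2/2$. The exponent becomes
\[
    \frac{\lambda_2}{2} - 1 - \lambda_2\log\frac{\lambda_2}{2} = -Q(\lambda_2/2) - \frac{\lambda_2}{2}\log\frac{\lambda_2}{2}.
\]
The surplus factor $n^{-(\lambda_2/2)\log(\lambda_2/2)}$ then has to absorb the prefactor $(\lambda_2/2)e^{\lambda_2/2}$. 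This works once $\lambda_2$ exceeds an absolute constant and $n$ is large, since then $\frac{\lambda_2}{2}\log\frac{\lambda_2}{2}\log n \geq \frac{\lambda_2}{2} + \log\frac{\lambda_2}{2}$. For $\lambda_2$ in a bounded range I would use $x = \lambda_2$, where the prefactor is $O(1)$, and compare $Q(\lambda_2)$ with $Q(\lambda_2/2)$.

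I expect that last comparison to be delicate. It holds comfortably for $\lambda_2 \geq 2$, since $Q$ is increasing on $[1,\infty)$ and $\lambda_2 \geq \lambda_2/2 \geq 1$. For $\lambda_2$ close to $1$, however, $Q(\lambda_2)$ is tiny while $Q(\lambda_2/2)$ is bounded away from zero, and for $\lambda_2 \to 1^+$ the bound $n^{-Q(\lambda_2/2)}$ does not match the true tail. So I would check the precise range in \cite[Theorem 1.7]{ford} and, if necessary, restrict to $\lambda_2 \geq 2$, or some fixed threshold, in the form used later in the paper.
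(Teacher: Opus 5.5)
Your Chernoff argument is correct, and it is essentially the route the paper indicates. The paper gives no proof: it only says that tail bounds follow from Lemma~\ref{lm:cycle_lengths} with $I=[n]$, and that this corollary is a special case of Ford's Theorem~1.7. Your lower-tail bound is complete, with implied constant $e$. Your care about uniformity in $\lambda_2$ is also warranted, since Section~\ref{sec:shannon_entropy} applies the upper tail with $\lambda_2 = B\log n$.

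Your suspicion about $\lambda_2$ near $1$ is right, and it is a defect of the statement rather than of your method: the upper-tail bound is false for $1<\lambda_2<e/2$.
\begin{itemize}
\item Take $\lambda_2 = 1+1/\log n$. The central limit theorem for $c(\pi_n)$ (mean $\log n+\gamma+o(1)$, variance asymptotic to $\log n$) gives $\P(c(\pi_n)\geq \log n+1)\to 1/2$. But $n^{-Q(\lambda_2/2)}\to 0$, because $Q(1/2) = (1-\log 2)/2>0$.
\item For fixed $\lambda_2\in(1,e/2)$, the true tail is $n^{-Q(\lambda_2)+o(1)}$, which exceeds $n^{-Q(\lambda_2/2)}$. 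So the bound fails even if the constant may depend on $\lambda_2$.
\end{itemize}
The threshold is exactly $e/2$ because
\[
    Q(\lambda_2)-Q(\lambda_2/2) = \frac{\lambda_2}{2}\bigl(\log(2\lambda_2)-1\bigr).
\]
This identity also shows that your choice $x=\lambda_2$, whose prefactor $\lambda_2e^{\lambda_2}$ is $O(1)$ for bounded $\lambda_2$, already handles every bounded $\lambda_2\geq e/2$, not just $\lambda_2\geq 2$. Your choice $x=\lambda_2/2$ then covers large $\lambda_2$ uniformly.

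The paper only ever uses the upper tail with $\lambda_2$ a large constant or $\lambda_2=B\log n$. So restricting the statement to $\lambda_2\geq e/2$, or to $\lambda_2\geq 2$, costs nothing downstream.
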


Our second local limit law is based on Fact \ref{fact:markov_chain} and can essentially be read out of the proof of \cite[Lemma 5.1]{acan-burnette-eberhard-schmutz-thomas}. For the sake of completeness, we provide a proof.

\begin{lemma}
\label{lm:cycles_and_primes}
For any $\ell, m, n \in \N$ we have
\[
    \P(c(\pi_n) = \ell,\ m \mid \ord(\pi_n)) \leq \frac{\ell^{\omega(m)}}{m}.
\]
\end{lemma}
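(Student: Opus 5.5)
Write $m=\prod_{i=1}^{k}p_i^{\alpha_i}$ with $k=\omega(m)$. The plan is a union bound over the ways of distributing the prime powers $p_i^{\alpha_i}$ among the cycles, followed by a direct Markov-chain computation using Fact \ref{fact:markov_chain}. By that fact we may replace $\pi_n$ by the chain $(Z_j^{(n)})_{j\ge0}$. In this model the event $\{c(\pi_n)=\ell\}$ becomes $\{T^{(n)}=\ell\}$, and the cycle lengths are the jumps $Z_j^{(n)}-Z_{j+1}^{(n)}$ for $0\le j<\ell$.

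\textbf{Union bound over assignments.} On the event $\{T^{(n)}=\ell,\ m\mid \ord\}$, each $p_i^{\alpha_i}$ divides the lcm of the jumps. Since $p_i^{\alpha_i}$ is a prime power, it then divides at least one individual jump. So there is a function $f\colon[k]\to\{0,\dots,\ell-1\}$ such that $p_i^{\alpha_i}\mid Z_{f(i)}^{(n)}-Z_{f(i)+1}^{(n)}$ for every $i$. Given $f$, set
\[
d_j\vcentcolon=\prod_{i\colon f(i)=j}p_i^{\alpha_i}\quad\text{for } 0\le j<\ell.
\]
The prime powers are pairwise coprime, so the requirement for $f$ is equivalent to $d_j\mid Z_j^{(n)}-Z_{j+1}^{(n)}$ for all $j<\ell$. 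Moreover $\prod_j d_j=m$. There are $\ell^{k}$ choices of $f$. It therefore suffices to show, for each fixed $f$,
\[
\P\bigl(j<T^{(n)}\text{ and } d_j\mid Z_j^{(n)}-Z_{j+1}^{(n)}\ \text{ for all } 0\le j<\ell\bigr)\le \frac1m .
\]
We keep the condition $j<T^{(n)}$ and drop the rest of the event $T^{(n)}=\ell$. This matters, because after absorption the jumps are $0$ and would be trivially divisible by every $d_j$.

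\textbf{Chain rule.} Condition successively on $Z_0^{(n)},\dots,Z_j^{(n)}$ and use the Markov property. Given $Z_j^{(n)}=v\ge1$, the jump $v-Z_{j+1}^{(n)}$ is uniform on $\{1,\dots,v\}$. Hence the conditional probability that $d_j$ divides it is $\lfloor v/d_j\rfloor/v\le 1/d_j$. Given $Z_j^{(n)}=0$, the requirement $j<T^{(n)}$ already fails, so the conditional probability is $0$. Multiplying these conditional bounds over $j=0,\dots,\ell-1$ gives $\prod_j d_j^{-1}=1/m$. Summing over the $\ell^{\omega(m)}$ choices of $f$ gives the claimed bound $\ell^{\omega(m)}/m$.

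The only delicate point is the bookkeeping with the absorbing state. We must not lose the information that the first $\ell$ steps are genuine, nonzero jumps, and this is why the condition $j<T^{(n)}$ is retained. Everything else is a routine union bound.
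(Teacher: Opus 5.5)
Your proposal is correct and matches the paper's proof: a union bound over the $\ell^{\omega(m)}$ assignments $f$ of prime powers to jumps, grouping them into the coprime products $d_j$ (the paper's $q_{f,j}$), and then the Markov property, which bounds each conditional factor by $1/d_j$ for a product of $1/m$. Keeping the condition $j<T^{(n)}$ plays the same role as the paper's requirement that each jump lie in $q_{f,j}\N$ (positive multiples), namely excluding the zero jumps after absorption.
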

\begin{proof}
    Let $m = \prod_{i=1}^{r}p_i^{\alpha_i}$ be the prime factorisation of $m$, where $r = \omega(m)$. Then $m$ divides the order if and only if for each $i \in [r]$ there exists a cycle of length divisible by $p_i^{\alpha_i}$. Hence, by Fact \ref{fact:markov_chain}, the probability on the left-hand side can be expressed as
    \[
        \P\Bigl(\{T^{(n)} = \ell\} \cap \bigcap_{i=1}^{r}\bigcup_{j=1}^{\ell}\{p_i^{\alpha_i} \mid Z_{j-1}^{(n)} - Z_j^{(n)}\}\Bigr) = \P\Bigl(\{T^{(n)} = \ell\} \cap \bigcup_{f \in [\ell]^{[r]}}\bigcap_{i=1}^{r}\{p_i^{\alpha_i} \mid Z_{{f(i)}-1}^{(n)} - Z_{f(i)}^{(n)}\}\Bigr).
    \]
    By the union bound, this is at most
    \[
        \sum_{f\in [\ell]^{[r]}}\P\Bigl(\{T^{(n)} = \ell\} \cap \bigcap_{i=1}^{r}\{p_i^{\alpha_i} \mid Z_{f(i)-1}^{(n)} - Z_{f(i)}^{(n)}\}\Bigr) \leq \sum_{f\in[\ell]^{[r]}}\P\Bigl(\bigcap_{j=1}^{\ell}\Bigl\{Z_{j-1}^{(n)} - Z_j^{(n)} \in q_{f,j}\N\Bigr\}\Bigr),
    \]
    where $q_{f,j} \vcentcolon= \prod_{i\in f^{-1}(j)}p_i^{\alpha_i}$. It thus suffices to show that each summand in the last sum is at most $1/m$. To see this, fix $f \colon [r] \to [\ell]$ and express the probability in question as
    \[
        \prod_{j=1}^{\ell}\P\Bigl(Z_{j-1}^{(n)} - Z_j^{(n)} \in q_{f,j}\N\ \Big|\ \bigcap_{j'<j}\{Z_{j'-1}^{(n)}-Z_{j'}^{(n)} \in q_{f,j'}\N\}\Bigr).
    \]
    For any $x \in [n]$, if we condition on $Z_{j-1}^{(n)} = x$, the difference $Z_{j-1}^{(n)} - Z_j^{(n)}$ becomes uniformly distributed on $[x]$. Hence, by the Markov property, the $j$-th factor in the above product is at most $1/q_{f,j}$. The desired conclusion now follows since $\prod_{j=1}^{\ell}q_{f,j} = m$.
\end{proof}

Even though we will formally only use Fact \ref{fact:markov_chain} through Corollary \ref{cor:recursion} and Lemma \ref{lm:cycles_and_primes}, it will be helpful to continue thinking about the cycle type of $\pi_n$ in terms of the random process described at the beginning of this subsection. In particular, we will occasionally refer to the Markov chain $(Z_j^{(n)})_{j\geq0}$ in informal descriptions of later arguments.

\section{Outline}\label{sec:outline}

We start by outlining the proof of Theorem \ref{thm:asymp_for_max}. For a given positive integer $m$, our aim is to upper bound the probability that $\ord(\pi_n)$ equals $m$. We may assume that $m \leq e^{O((\log n)^2)}$, as otherwise standard upper tail bounds on the order imply that $p_n(m) \leq o(1/n)$. In particular, by Lemma \ref{lm:sum_of_divisors}, $h(m) \ll \log\log n$. The key observation is that, as soon as $c(\pi_n)$ is significantly larger than $\log\log n$, Corollary \ref{cor:cycle_divisors} implies that the probability that $\ord(\pi_n)$ equals $m$ is $o(1/n)$. Therefore, it remains to deal with the case when the number of cycles is at most $O(\log\log n)$. The lower tail bound from Corollary \ref{cor:tail_bounds} already gives a bound of the form $n^{-1+o(1)}$ for the probability of this event, however, this is not enough for our purposes. Instead, we exploit the fact that the number of cycles is $(\log m)^{o(1)}$ via Lemma \ref{lm:cycles_and_primes} and Lemma \ref{lm:prime_divisor_bound}. This yields a bound of the form $m^{-1+o(1)}$, which is $o(1/n)$ as soon as $m \geq n^{1+\varepsilon}$ for some $\varepsilon > 0$. Hence, it remains to deal with $m \leq n^{1+o(1)}$, and this we do using Corollary \ref{cor:recursion}. Since $m$ is so small, it follows from Lemmas \ref{lm:divisor_bound} and \ref{lm:div_in_int_simple} that only a single value of $x$ can contribute significantly to the sum appearing in \eqref{eq:recursion}. In particular, we obtain the desired upper bound $p_n(m) \leq (1+o(1))/n$. Some additional work shows that $p_n(m)$ being close to the maximum forces $n-x$ to be divisible by $\lcm(1,\ldots,x)$, which completes the proof of Theorem \ref{thm:asymp_for_max}.

Given Theorem \ref{thm:asymp_for_max}, it is a fairly short step to Theorem \ref{thm:equality_case}. For $k \in K_n$, one expects the probability that $\ord(\pi_n) = n-k$ to be dominated by the event that $\pi_n$ contains a cycle of length $n-k$. The latter happens with probability $1/(n-k)$, an increasing function of $k$. It therefore suffices to prove a local limit law confirming the prediction that $\P(\ord(\pi_n) = n-k)$ equals $1/(n-k)$ up to a suitably small error. This can be accomplished by generalising some of the existing arguments in the literature pertaining to the case $k = 0$.

The proofs of the results concerning the collision probability follow broadly similar lines, but require several new ideas. For the sake of exposition, we focus here on the case $q = 2$. We begin by sketching an argument leading to a bound of the form $n^{-2+o(1)}$. Even though such a bound was already proved in \cite{acan-burnette-eberhard-schmutz-thomas}, our proof is simpler and we will be able to reuse some of the ideas in later arguments. By a similar argument as in the proof of Theorem \ref{thm:asymp_for_max}, one can show that
\begin{equation}\label{eq:squared_bound}
    \P(c(\pi_n) \geq L,\ \ord(\pi_n) = m) \leq o(1/n^2),
\end{equation}
where $L \vcentcolon= C\log n/\log\log n$ for a sufficiently large constant $C > 0$. By dividing into cases according to whether or not $\pi_n$ and $\pi_n'$ contain at most $L$ cycles, we may bound
\[
    P_2(n) \leq \P(\ord(\pi_n)=\ord(\pi_n'),\ c(\pi_n),c(\pi_n') \leq L) + 2\P(\ord(\pi_n)=\ord(\pi_n'),\ c(\pi_n) > L).
\]
By conditioning on the order of $\pi_n'$ and using \eqref{eq:squared_bound}, the second term can be seen to be $o(1/n^2)$. On the other hand, the lower tail bound from Corollary \ref{cor:tail_bounds} implies that the first term is at most
\[
    \P(c(\pi_n) \leq L)\P(c(\pi_n') \leq L) \leq n^{-2+o(1)},
\]
whence \eqref{eq:equal_order_prob} follows.

The main bottleneck in the previous argument is the use of the lower tail bound for the number of cycles. Indeed, essentially any argument making direct use of this bound cannot give a good quantification of the $n^{o(1)}$ term, let alone one of the form $\log^*n$. Instead, our starting point is the recursive expression \eqref{eq:recursion}. Since the sum is supported on at most $\tau(m)$ values of $x$, an application of the Cauchy--Schwarz inequality gives
\begin{equation}\label{eq:cauch_schwarz}
    p_n(m)^2 \leq \frac{\tau(m)}{n^2}\sum_{0\leq x < n}\P(\lcm(\ord(\pi_x),n-x) = m)^2.
\end{equation}
Furthermore, using Lemma \ref{lm:cycles_and_primes} and Lemma \ref{lm:prime_divisor_bound}, one can show that for sufficiently large $m$, say $m > M$, we have $p_n(m) \leq o(1/n^2)$. Hence, the total contribution to $P_2(n)$ of all $m > M$ is $o(1/n^2)$. We may thus restrict attention to $m \leq M$; summing \eqref{eq:cauch_schwarz} over that range and exchanging the order of summation, we obtain that
\begin{equation}\label{eq:exchanged_sum}
    \sum_{m\leq M}p_n(m)^2 \leq \frac{\max_{m\leq M}\tau(m)}{n^2}\sum_{0 \leq x < n}\sum_{m \leq M}\P(\lcm(\ord(\pi_x),n-x) = m)^2.
\end{equation}
The inner sum can be interpreted as the collision probability for (a truncated version of) the random variable $\lcm(\ord(\pi_x),n-x)$. As such, it can be estimated in essentially the same way as was done for $\ord(\pi_n)$ in the argument sketched above. Indeed, we only ever used the information that $\ord(\pi_n)$ divides $m$, however, it was crucial for the application of Corollary \ref{cor:cycle_divisors} that the number of cycles was significantly larger than $\log\log m \asymp \log\log n$. One can thus obtain an estimate of the form $x^{-2+\varepsilon}$ as long as $x \geq (\log n)^C$, where $C > 0$ is sufficiently large depending on $\varepsilon$, and $\varepsilon > 0$ is arbitrary. The sum of $x^{-2+\varepsilon}$ converges, so one obtains a bound of $(\log n)^{O(1)}$ for the outer sum in \eqref{eq:exchanged_sum}. Since $M$ can be chosen so that numbers up to $M$ have at most $n^{o(1)}$ divisors, one recovers from \eqref{eq:exchanged_sum} a bound of the form $n^{-2+o(1)}$.

There are two important issues with the above approach, and we have to address both in order to obtain the desired iterated logarithm bound. The first, and more obvious one, is that any approach that naively applies the Cauchy--Schwarz inequality as in \eqref{eq:cauch_schwarz} cannot ultimately yield a bound better than $n^{-2+o(1)}$. To circumvent this, we use Lemmas \ref{lm:divisor_bound}, \ref{lm:div_in_int_simple} and \ref{lm:div_in_int_hard} to split the range of summation $[0,n)$ in \eqref{eq:recursion} into several intervals $I$ so that $m$ has few divisors in the interval $n-I$. For all but one interval, the saving coming from the corresponding tail of the series $\sum_{x}x^{-2+\varepsilon}$ will balance out the cost incurred by the divisor bound. This naturally brings us to the second issue, which is how to deal with the remaining interval, which has the form $[0,(\log n)^{O(1)})$. It turns out that even though $h(m)$ may be much larger than $\log x$ for $x$ in this range, the previous argument goes through whenever $x$ does not belong to a certain set of exceptional values for which $n-x$ has many divisors. This set can be viewed as an approximate version of $K_n$, and one can show that its size behaves similarly to that of $K_n$. With a little bit of extra work, one can then obtain Theorems \ref{thm:collision_prob_upper_bound}, \ref{thm:collision_prob_explicit} and \ref{thm:collision_prob_average}.

Finally, the proof of Theorem \ref{thm:shannon_entropy} follows rather different lines to those of the previous results. Here, our main tool is a weak form of the Erdős--Turán law, which was established in their earlier paper \cite{erdos-turan-groups-i}, and states that $\ord(\pi_n) = \exp((\frac{1}{2}+o(1))\log^2n)$ with probability $1-o(1)$. Hence, the idea is to show that, from the perspective of Shannon entropy, $\ord(\pi_n)$ behaves roughly like a uniform random variable on the scale $\exp((\frac{1}{2}+o(1))\log^2n)$. To make this precise, we require some pointwise control on the probability mass function $p_n$. For this purpose, we use Lemma \ref{lm:cycles_and_primes}, however this time the bounds on $\omega(m)$ provided by Lemma \ref{lm:prime_divisor_bound} are not sufficient. Instead, we make use of another result of Erdős and Turán \cite{erdos-turan-groups-ii}, which states that $\omega(\ord(\pi_n)) \sim \log n\log\log n$ with probability $1-o(1)$.

\section{Min-entropy}\label{sec:min_entropy}

The following proposition is the main stepping stone towards Theorem \ref{thm:asymp_for_max}.

\begin{proposition}
\label{prop:large_orders}
For any $\varepsilon > 0$, we have
\[
    \max_{m\geq n^{1+\varepsilon}}p_n(m) = o(1/n).
\]
\end{proposition}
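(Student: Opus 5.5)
We follow the outline in Section \ref{sec:outline} and bound $p_n(m)$ for a fixed $m \geq n^{1+\varepsilon}$ by splitting on the size of $m$ and on the number of cycles $c(\pi_n)$. The goal in each case is a bound of $o(1/n)$ that is uniform in $m$.

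\emph{Step 1: very large $m$.} First we dispose of $m > \exp(C\log^2 n)$ for a large constant $C$. Since $\ord(\pi_n) \leq n^{c(\pi_n)}$, the event $\ord(\pi_n) = m$ forces $c(\pi_n) \geq \log m/\log n \geq C\log n$. By the upper tail in Corollary \ref{cor:tail_bounds} with $\lambda_2 = C$, this has probability $\ll n^{-Q(C/2)}$, and $Q(C/2) > 1$ once $C$ is large enough. Hence we may assume $\log m \ll \log^2 n$. Then $\log\log m \ll \log\log n$, and Corollary \ref{cor:reciprocal_divisors} gives $h(m) \ll \log\log\log n$, which is even better than the $h(m) \ll \log\log n$ we actually need.

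\emph{Step 2: many cycles.} Let $L := K\log\log n$ with $K$ a large constant. We sum Corollary \ref{cor:cycle_divisors} over $\ell > L$:
\[
    \P(c(\pi_n) > L,\ \ord(\pi_n) = m) \leq \frac{1}{n}\sum_{\ell > L}\Bigl(\frac{eh(m)}{\ell-1}\Bigr)^{\ell-1}.
\]
Once $\ell - 1 \geq 2e\,h(m)$, each term is at most $2^{-(\ell-1)}$, so the sum is $\ll 2^{-L}$. This tends to $0$ because $h(m) = o(L)$. Consequently this part contributes $o(1/n)$ uniformly in $m$.

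\emph{Step 3: few cycles, the main obstacle.} It remains to handle $\ell \leq L$, that is, the event that $\pi_n$ has few cycles while $m = \ord(\pi_n)$ is at least $n^{1+\varepsilon}$. Here we apply Lemma \ref{lm:cycles_and_primes} with this $m$, using $\{\ord(\pi_n) = m\} \subseteq \{m \mid \ord(\pi_n)\}$, and sum over $\ell \leq L$:
\[
    \P(c(\pi_n) \leq L,\ \ord(\pi_n) = m) \leq \frac{L\cdot L^{\omega(m)}}{m}.
\]
Lemma \ref{lm:prime_divisor_bound} gives $\omega(m) \leq (1+o(1))\log m/\log\log m$. Since $\log L \asymp \log\log\log n$, we get
\[
    L^{\omega(m)} = \exp\Bigl((1+o(1))\frac{\log m\,\log L}{\log\log m}\Bigr).
\]
We have $\log\log m \geq \log\log n$, so the exponent is $o(\log m)$ uniformly, and the bound becomes $m^{-1+o(1)}L$. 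The uniformity needs care: the $o(1)$ must depend only on $n$, not on $m$. This holds because $m \geq n^{1+\varepsilon}$ gives $\log\log m \geq \log\log n$, while $\log L$ depends only on $n$. Therefore the bound is at most $n^{-(1+\varepsilon)(1-o(1))}\log\log n = o(1/n)$.

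Adding the contributions of Steps 1 to 3 gives $p_n(m) = o(1/n)$ uniformly over $m \geq n^{1+\varepsilon}$, which proves Proposition \ref{prop:large_orders}.
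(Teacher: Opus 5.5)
Your argument is correct and is essentially the paper's proof: the same split by the number of cycles, using the tail bound via $\ord(\pi_n)\le n^{c(\pi_n)}$ for very many cycles, Corollary \ref{cor:cycle_divisors} for more than $K\log\log n$ cycles, and Lemma \ref{lm:cycles_and_primes} with Lemma \ref{lm:prime_divisor_bound} for few cycles. One slip to fix: Corollary \ref{cor:reciprocal_divisors} only gives $h(m)\ll\log\log m\ll\log\log n$, not $h(m)\ll\log\log\log n$ (take $m=\lcm(1,\ldots,\lfloor\log^2 n\rfloor)$), so $h(m)=o(L)$ is false. Step 2 still goes through because you may take $K$ large compared with the implied constant, just as the paper takes $C_1$ large.
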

\begin{proof}
    Fix $\varepsilon > 0$, let $n$ be sufficiently large in terms of $\varepsilon$ and suppose that $m \geq n^{1+\varepsilon}$. The event that $\pi_n$ has order $m$ can be decomposed into the following three events according to the number of cycles in $\pi_n$:
    \[
        F_1 \vcentcolon= \{c(\pi_n) \leq \lceil C_1\log\log n\rceil,\ \ord(\pi_n) = m\},
    \]
    \[
        F_2 \vcentcolon= \{\lceil C_1\log\log n\rceil < c(\pi_n) \leq C_2\log n,\ \ord(\pi_n) = m\},
    \]
    \[
        F_3 \vcentcolon= \{c(\pi_n) > C_2\log n,\ \ord(\pi_n) = m\},
    \]
    where $C_1, C_2 > 0$ are sufficiently large absolute constants. We estimate the probabilities of these events in turn. First, Corollary \ref{cor:tail_bounds} gives
    \[
        \P(F_3) \leq \P(c(\pi_n) > C_2\log n) = o(1/n)
    \]
    provided $C_2$ is large enough. Next, since the order of a permutation is at most the product of the lengths of its cycles, we have $\ord(\pi_n) \leq n^{c(\pi_n)}$. Thus, if $m > n^{C_2\log n}$, then $\P(F_2) = 0$. Otherwise, by Lemma \ref{lm:sum_of_divisors}, we have $h(m)\leq C\log\log n$ for some absolute constant $C > 0$. Hence, using Corollary \ref{cor:cycle_divisors}, we obtain
    \[
        \P(F_2) = \sum_{\lceil C_1\log\log n\rceil<\ell\leq C_2\log n}\P(c(\pi_n) = \ell,\ \ord(\pi_n) = m) \leq C_2\log n\cdot\frac{1}{n}\Bigl(\frac{Ce}{C_1}\Bigr)^{C_1\log\log n} = o(1/n)
    \]
    provided $C_1$ is large enough. Finally, by Lemma \ref{lm:cycles_and_primes} and Lemma \ref{lm:prime_divisor_bound}, for any $\ell \leq \lceil C_1\log\log n\rceil$ we have
    \[
        \P(c(\pi_n) = \ell,\ \ord(\pi_n) = m) \leq \exp\Bigl(O\Bigl(\frac{\log m}{\log\log n}\cdot\log\log\log n\Bigr) - \log m\Bigr) \leq \frac{1}{n^{1+\varepsilon/2}}.
    \]
    By summing over all $\ell$ in this range, it follows that
    \[
        \P(F_1) = \sum_{\ell\leq \lceil C_1\log\log n\rceil}\P(c(\pi_n) = \ell,\ \ord(\pi_n) = m) \leq \frac{\lceil C_1\log\log n\rceil}{n^{1+\varepsilon/2}} = o(1/n),
    \]
    which concludes the proof.
\end{proof}

We are now ready to prove Theorem \ref{thm:asymp_for_max}. 

\begin{proof}[Proof of Theorem \ref{thm:asymp_for_max}]
    In view of \eqref{eq:lower_bound_cycle}, it suffices to show that, under the assumption that $n$ is sufficiently large and $m$ satisfies $p_n(m) \geq 1/n$, we have $p_n(m) \leq (1+o(1))/n$ and $n - m \in K_n$. In particular, by Proposition \ref{prop:large_orders}, we may assume that $m \leq n^{4/3}$ say. The starting point is an application of Corollary \ref{cor:recursion}. By the first statement, we have
    \begin{equation}\label{eq:recursion_new}
        p_n(m) = \frac{1}{n}\sum_{\substack{0\leq x<n\\n-x\mid m}}\P(\lcm(\ord(\pi_x), n-x) = m),
    \end{equation}
    and by the second statement applied with $x$ in place of $n$,
    \begin{equation}\label{eq:bound_on_summand}
         \P(\lcm(\ord(\pi_x), n-x) = m) \leq \frac{\tau(m)}{x}
    \end{equation}
    whenever $0 < x < n$. It follows that the total contribution of all $x \geq n^{1/2}$ to the right-hand side of \eqref{eq:recursion_new} is at most
    \[
        \frac{1}{n}\cdot\tau(m)\cdot\frac{\tau(m)}{n^{1/2}} = \frac{\tau(m)^2}{n^{3/2}},
    \]
    which, by Lemma \ref{lm:divisor_bound}, is at most $O(n^{-4/3})$ say. On the other hand, by Lemma \ref{lm:div_in_int_simple}, there is at most one $x < n^{1/2}$ such that $n-x \mid m$. Thus, $m$ must have a unique such divisor $d \in (n-n^{1/2}, n]$, which satisfies
    \[
        p_n(m) \leq \frac{1}{n}\P(\lcm(\ord(\pi_{n-d}), d) = m) + O(n^{-4/3}).
    \]
    In particular, we have $p_n(m) \leq (1+o(1))/n$. Moreover, since $m$ was assumed to satisfy $p_n(m) \geq 1/n$, it follows that
    \begin{equation}\label{eq:small_prob}
        \P(\lcm(\ord(\pi_{n-d}), d) \neq m) \ll n^{-1/3}.
    \end{equation}
    We contend that this forces $d$ to be divisible by all positive integers less than or equal to $n-d$. Indeed, suppose this does not hold. Then there exists a prime $p$ such that the largest power of $p$ not exceeding $n-d$, call it $q$, does not divide $d$. In particular, by maximality of $q$, we have 
    \begin{equation}\label{eq:maximality}
        q^2 \geq pq > n-d.
    \end{equation}
    Let $E$ be the event that the order of $\pi_{n-d}$ is divisible by $q$. Then on $E$, the $p$-adic valuation of $\lcm(\ord(\pi_{n-d}), d)$ equals that of $q$, and on $E^c$, it is strictly less than that of $q$. Consequently, at least one of $E$, $E^c$ is contained in the event on the left-hand side of \eqref{eq:small_prob}, so $\min(\P(E),\P(E^c)) \ll n^{-1/3}$. But by \cite[Lemma 1]{erdos-turan-groups-ii}, we have the exact expression
    \[
        \P(E^c) = \prod_{j=1}^{\lfloor (n-d)/q\rfloor}\Bigl(1-\frac{1}{jq}\Bigr).
    \]
    Hence, using the union bound and \eqref{eq:maximality}, we obtain the approximation
    \begin{equation}\label{eq:approx_prob}
        \frac{1}{q} \leq \P(E) \leq \sum_{j=1}^{\lfloor (n-d)/q\rfloor}\frac{1}{jq} \ll \frac{\log q}{q}.
    \end{equation}
    Note that by \eqref{eq:small_prob}, we certainly have
    \[
        \P(\lcm(\ord(\pi_{n-d}),d) = m) \geq \frac{1}{2},
    \]
    so \eqref{eq:bound_on_summand} implies $n-d \leq 2\tau(m)$. By Lemma \ref{lm:divisor_bound}, this means that $q \ll n^{1/4}$ say, whence the lower bound \eqref{eq:approx_prob} implies $\P(E) \gg n^{-1/4}$. Thus, we cannot have $\P(E) \ll n^{-1/3}$, so the only remaining option is $\P(E^c) \ll n^{-1/3}$. In view of the upper bound \eqref{eq:approx_prob}, this means that $q \ll 1$. Hence, by \eqref{eq:maximality}, we also have $n-d \ll 1$. But then necessarily $\P(E^c) = 0$, which is absurd since $\P(\ord(\pi_{n-d}) = 1) > 0$. Therefore, the claim follows, so in particular $d = m$. We conclude that $m = n-k$ for some $k \in K_n$, thereby completing the proof.
\end{proof}

Theorem \ref{thm:equality_case} follows by combining Theorem \ref{thm:asymp_for_max} and the following proposition, which gives accurate control on the point probabilities $\P(\ord(\pi_n) = n-k)$ for $k \in K_n$.

\begin{proposition}
\label{prop:exact_order}
For any $k \in K_n$ we have
\[
    \P(\ord(\pi_n) = n-k) = \frac{1}{n-k} + \eta(n,k) + O(n^{-3+o(1)}),
\]
where we define
\[
    \eta(n,k) \vcentcolon= \begin{cases}0 & \text{if } k \in \{0,1\} \text{ or } 2^{\lfloor\log_2k\rfloor+1} \mid n-k\\\frac{2^{1-\lfloor\log_2k\rfloor}}{(n-k)^2} & \text{otherwise}\end{cases}.
\]
\end{proposition}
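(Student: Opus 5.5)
The plan is to split according to whether $\pi_n$ has a cycle of length $m \vcentcolon= n-k$. Since $k \in K_n$ forces $\lcm(1,\ldots,k) \le n$, Lemma \ref{lm:chebyshev} gives $k \ll \log n$, so $m > n/2$ for large $n$.

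\emph{Main term.} The probability that $\pi_n$ contains an $m$-cycle is exactly $1/m$, because at most one such cycle fits. On this event the remaining $k$ points form a uniform permutation in $S_k$. Its order divides $\lcm(1,\ldots,k)$, which divides $m$, so $\ord(\pi_n)=m$. Hence
\[
p_n(m) = \frac{1}{m} + \P\bigl(\ord(\pi_n)=m,\ \pi_n \text{ has no $m$-cycle}\bigr),
\]
and it remains to show that the last probability equals $\eta(n,k)+O(n^{-3+o(1)})$. On this event every cycle length is a proper divisor of $m$, hence at most $m/2$.

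\emph{Secondary term $\eta$.} The only way to cover $n=m+k$ points with few large proper divisors of $m$ is to use two cycles of length $m/2$, leaving $k$ points. This requires $m$ even, so $k\ge 2$ (for $k=1$ we have $m$ odd, and $k=0$ is impossible). For large $n$ two cycles of length $m/2 > k$ are the only cycles of that length, and the probability of this is $\frac{1}{2}(m/2)^{-2}(1+O(1/n))=\frac{2}{m^2}+O(n^{-3})$. Conditionally, the rest is a uniform $\pi_k$. Write $a=\lfloor \log_2 k\rfloor$, so $v_2(m)\ge a$.
\begin{itemize}
\item The order equals $\lcm(m/2,\ord(\pi_k))$, and this equals $m$ precisely when some cycle of $\pi_k$ has $2$-adic valuation $v_2(m)$. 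For odd primes nothing changes.
\item If $2^{a+1}\mid m$, no length $\le k$ has valuation $v_2(m)$, so the contribution is $0$.
\item Otherwise $v_2(m)=a$, and the only length $\le k$ divisible by $2^a$ is $2^a$ itself, because $2^{a+1}>k$. Since $2^a>k/2$, the probability that $\pi_k$ has a $2^a$-cycle is exactly $2^{-a}$.
\end{itemize}
This gives the contribution $2^{1-a}/m^2=\eta(n,k)$ up to $O(n^{-3})$, matching the case distinction in the statement.

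\emph{Error term.} The main work is to show that all remaining configurations contribute $O(n^{-3+o(1)})$. These have all cycle lengths proper divisors of $m$, lcm equal to $m$, and not exactly two $(m/2)$-cycles plus a $k$-point remainder. I would use the Markov chain of Fact \ref{fact:markov_chain}, ordering cycles by size-biased sampling, and iterate Corollary \ref{cor:recursion} twice. The event is contained in the union of two sub-events.

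(a) The two largest cycles $d_1,d_2\mid m$ satisfy $d_1+d_2 = n-x$, and the remainder $\pi_x$ has order dividing $m$. The first two jumps of the chain hitting the prescribed values $d_1,d_2$ have probability about $1/(n(n-d_1))$. The remainder contributes at most $\tau(m)/x$ by Corollary \ref{cor:recursion}. Now $d_1,d_2\le m/2$ with $d_1+d_2\ge n-x$ forces either $d_1=d_2=m/2$ with $x=k$ (the main configuration, excluded), or $x\gg n$ (for instance $m/2+m/3$ leaves $x\approx n/6$). In the latter case the factor $\tau(m)/x$ already supplies the third power of $n^{-1}$. Summing over the at most $\tau(m)^2$ pairs $(d_1,d_2)$ and using Lemma \ref{lm:divisor_bound} gives $n^{-3+o(1)}$.

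(b) The remaining cases: a single $(m/2)$-cycle whose complement of $m/2+k$ points contains no further $(m/2)$-cycle, or a largest cycle at most $m/3$. The idea is the same: at least three jumps, each landing on one of at most $\tau(m)$ divisors of size $\gg n$, each costs $O(\tau(m)/n)$. Where the third factor comes from a small remainder instead, it is supplied by Corollary \ref{cor:recursion} together with the fact that $x$ is then bounded below by a constant multiple of $n$.

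\emph{Main obstacle.} I expect the hard step to be the bookkeeping in (b). One must verify that every configuration other than the $(m/2,m/2)$ one genuinely involves three factors of order $\tau(m)/n$. This means ruling out near-miss divisor pairs $d_1+d_2\in(n-o(n),n]$. Here I would use that proper divisors of $m$ are $m/j$ with $j\ge 2$, so $d_1+d_2\le m$ with equality only for $j_1=j_2=2$, and the next largest sum is at most $5m/6$. This leaves a gap of order $n$, which is exactly what is needed.
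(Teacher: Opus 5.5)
Your argument is correct, and it takes a genuinely different and shorter route than the paper's.

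\textbf{How the two proofs differ.} The paper adapts Warlimont's argument. It expands $\P(\ord(\pi_n)=n-k)$ by Cauchy's formula and splits the divisors of $n-k$ at $n^{1-\delta}$. Configurations with at least three large cycles are bounded by $(\tau(n-k)n^{\delta-1})^3$. Configurations with at most two large cycles force many small cycles, all dividing $n-k$, and these need a separate super-polynomially small quantity $F(n,k)$. The paper then lets $\delta\to0$.

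You avoid the large/small split altogether. In the size-biased chain, the first two jumps take prescribed values $d_1,d_2$ with probability exactly $\frac{1}{n}\cdot\frac{1}{n-d_1}\le\frac{2}{n^2}$, however small $d_1,d_2$ are. The gap $d_1+d_2\le\frac{5}{6}m$ (unless $d_1=d_2=m/2$) leaves $x\ge n/6$, so Corollary~\ref{cor:recursion} supplies a third factor $6\tau(m)/n$. This gives the explicit error $12\tau(n-k)^3/n^3$ using only the tools of Section~\ref{sec:prelim}.

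Both your main term $1/m$ and your secondary term $\frac{2}{m^2}\cdot 2^{-a}$ are exact. The factor $1+O(1/n)$ you allow is unnecessary, since exactly two $(m/2)$-cycles occur with probability exactly $2/m^2$. The paper's route mainly buys continuity with the existing literature on the case $k=0$.

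\textbf{Points to tidy.}
\begin{itemize}
\item In (a), $d_1,d_2$ must be the first two jumps of the chain, not the two largest cycles. The bound $1/(n(n-d_1))$ holds only for size-biased picks. For the two largest cycles no such bound is available when all cycles are small, which is exactly the regime the paper handles with $F(n,k)$.
\item With that reading, (a) alone covers the whole error event, so (b) is redundant. On the error event every cycle length is a proper divisor of $m$ and there are at least two cycles. If the first two jumps were $(m/2,m/2)$, only $k<m/2$ points would remain, forcing exactly two $(m/2)$-cycles, which is excluded.
\item The phrase ``divisors of size $\gg n$'' in (b) is inaccurate. Jumps may be small; what matters is only that the current state of the chain is $\gg n$.
\item For $k=1$ it is false that $m=n-1$ must be odd. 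When $m$ is even, two $(m/2)$-cycles plus a fixed point do occur, but they give order $m/2$. Your own bullet with $a=0$ already shows this, so $\eta(n,1)=0$ still holds.
\end{itemize}
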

\begin{proof}
    The proof is a relatively straightforward adaptation of the arguments of Warlimont \cite{warlimont}, which deal with the case $k = 0$.\footnote{In fact, Warlimont considers the probability that the order divides $n$ instead of being exactly equal to $n$, but this distinction is not significant.} Hence, we will be fairly brief on the details. As in \cite{warlimont}, we start by using Cauchy's formula \cite[Theorem 1.2]{ford} to express
    \[
        \P(\ord(\pi_n) = n-k) = \frac{1}{n-k} + \sum_{\substack{m,m_1,\ldots,m_r \in \N_0\\m+\sum_{j=1}^{r}m_jd_j = n\\\lcm\{d_j\mid j\in[r],\ m_j > 0\} = n-k}}\frac{1}{m!}\prod_{j=1}^{r}\frac{1}{m_j!d_j^{m_j}},
    \]
    where $1 < d_1 < \ldots < d_r < n-k$ are the divisors of $n-k$. For each $i \in \N_0$, we let $T_i$ be the total contribution of the terms satisfying $\sum_{j=s+1}^{r}m_j = i$, where $s$ is the number of $j \in [r]$ for which $d_j < n^{1-\delta}$, and $\delta > 0$ is some small parameter. One can then proceed in the same way as in \cite{warlimont} to bound
    \[
        \sum_{i=3}^{\infty}T_i \ll (\tau(n-k)n^{\delta-1})^3.
    \]
    Furthermore, in analogy to \cite{warlimont}, one can establish that 
    \[
        T_0 \leq F(n,k), \quad T_1 \leq \tau(n-k)n^{\delta-1}F(n,k),
    \] 
    where we define
    \[
        F(n,k) \vcentcolon= n\sum_{m\geq A(n,k)}\frac{1}{m!} + 2^{-B(n,k)}\tau(n-k)\exp(\tau(n-k)),
    \]
    \[
        A(n,k) \vcentcolon= \frac{n}{6\tau(n-k)},\quad B(n,k) \vcentcolon= \frac{n^{\delta}}{6\tau(n-k)}.
    \]
    In a similar vein, the total contribution to $T_2$ of all terms apart from those with
    \begin{equation}\label{eq:special_terms}
        d_r = \frac{n-k}{2},\quad m_r = 2, \quad m_{s+1} = \ldots = m_{r-1} = 0
    \end{equation}
    is at most $O((\tau(n-k)n^{\delta-1})^2F(n,k))$. It therefore remains to show that the contribution of the terms satisfying \eqref{eq:special_terms} is precisely $\eta(n,k)$. Indeed, if $n$ is large enough, then we have $\tau(n-k) \leq n^{\delta/3}$ and hence 
    \[
        A(n,k) \geq \frac{1}{6}n^{1-\delta/3}, \quad B(n,k) \geq \frac{1}{6}n^{2\delta/3}.
    \] 
    Consequently, we may bound
    \[
        \sum_{i=3}^{\infty}T_i \ll n^{-3+4\delta}
    \]
    and also
    \[
        F(n,k) \ll \exp\Bigl(-\frac{1}{6}n^{1-\delta/3}\Bigr) + \exp\Bigl(-\frac{\log2}{6}n^{2\delta/3}+n^{\delta/3}+\frac{\delta}{3}\log n\Bigr),
    \]
    which is certainly $O(n^{-3})$. Since $\delta > 0$ is arbitrary, we obtain an error term of the desired form.
    
    To finish the proof, we carefully examine the terms that satisfy \eqref{eq:special_terms}. For such terms, we have $m+\sum_{j=1}^{s}m_jd_j = k$. In order to have $\lcm\{d_j \mid j\in [r],\ m_j > 0\} = n-k$, there must exist $j \in [s]$ such that $m_j > 0$ and $\nu_2(d_j) = \nu_2(n-k)$, where $\nu_2$ denotes $2$-adic valuation. For this to be possible, we need $\nu_2(n-k)$ to be equal to the maximum of $\nu_2(t)$ over all $t \in [k]$. In particular, if $k \in \{0,1\}$ or $\nu_2(n-k) > \lfloor\log_2k\rfloor$, this is not possible. Otherwise, the terms of interest are precisely those which in addition to \eqref{eq:special_terms} satisfy $m_j=1$ for the unique $j \in [s]$ such that $d_j = 2^{\lfloor\log_2k\rfloor}$. Their total contribution is easily seen to be
    \[
        \frac{1}{2\bigl(\frac{n-k}{2}\bigr)^2\cdot2^{\lfloor\log_2k\rfloor}} = \frac{2^{1-\lfloor\log_2k\rfloor}}{(n-k)^2},
    \]
    as desired.
\end{proof}

We can now prove Theorem \ref{thm:equality_case}.

\begin{proof}[Proof of Theorem \ref{thm:equality_case}]
    Assume $n$ is sufficiently large and let $k_0 \vcentcolon= \max K_n$. By Theorem \ref{thm:asymp_for_max}, it suffices to prove that $p_n(n-k_0) > p_n(n-k)$ for all $k \in K_n \setminus \{k_0\}$.
    Hence, by Proposition \ref{prop:exact_order}, it is enough to show that
    \begin{equation}\label{eq:final_ineq}
        \frac{k_0-k}{(n-k_0)(n-k)} + \eta(n,k_0)-\eta(n,k) \geq \frac{1}{(n-k)^2}.
    \end{equation}
    If $k \in \{0,1\}$, then $\eta(n,k) = 0$, so \eqref{eq:final_ineq} certainly holds. Hence, we may assume that $k \geq 2$, so in particular $\eta(n,k) \leq 1/(n-k)^2$. Since $\lcm(1,\ldots,k)$ divides both $n-k$ and $n-k_0$, it must divide $k_0-k$. Therefore, $k_0 - k \geq 2$, so the left-hand side  of \eqref{eq:final_ineq} is at least
    \[
        \frac{2}{(n-k_0)(n-k)} - \frac{1}{(n-k)^2} > \frac{1}{(n-k)^2},
    \]
    as desired.
\end{proof}

Finally, we are in a position to prove Corollary \ref{cor:min_ent_asymp}. Indeed, it follows from Lemma \ref{lm:chebyshev} that $\max K_n \ll \log n$. As a consequence of Theorem \ref{thm:equality_case} and Proposition \ref{prop:exact_order}, we thus obtain the more refined asymptotic 
\[
    P_{\infty}(n) = \frac{1}{n} + O\Bigl(\frac{\log n}{n^2}\Bigr).
\]
A short calculation now yields
\[
    H_{\infty}(\ord(\pi_n)) = \Bigl(1+O\Bigl(\frac{1}{n}\Bigr)\Bigr)\log n,
\]
which is what we wanted to prove. That the error term here is best possible can be seen by considering $n$ of the form $\lcm(1,\ldots,k)+k$ for $k \in \N$.

\section{Collision entropy}\label{sec:collision_entropy}

Throughout this section, we treat the parameter $q \in (1,\infty)$ as fixed; in particular, we will suppress the dependence of implied constants on $q$. We bound the collision probability for $\ord(\pi_n)$ in terms of the count of numbers in $[n]$ possessing an exceptionally rich arithmetic structure. In order to make this precise, recall that $h(m)$ denotes the sum of reciprocal divisors of $m$. Call a number $x \in \{0,1,\ldots,n-1\}$ \emph{exceptional} if
\begin{equation}\label{eq:except_defn}
    h\bigl(\gcd(\lcm(1,\ldots,x), n-x)\bigr) \geq \frac{c\log x}{\log\log x},
\end{equation}
where $c > 0$ is a suitably small fixed constant. Let $E_n$ be the set of all exceptional numbers. Note that if $x \in K_n$, then the left-hand side of \eqref{eq:except_defn} is at least $\Omega(\log x)$, so $K_n$ is a subset of $E_n$.

The following is the main result of this section, from which Theorems \ref{thm:collision_prob_upper_bound}, \ref{thm:collision_prob_explicit} and \ref{thm:collision_prob_average} will later be deduced.

\begin{theorem}
\label{thm:except_collision}
Let $q \in (1, \infty)$ be fixed. Then for all $n$ and $0 < X \ll \log n$ we have
\[
    n^qP_q(n) = \sum_{0\leq x < X}\sum_{m\in\N}\P(\lcm(\ord(\pi_x),n-x) = m)^q + R + \rho,
\]
where $0 \leq R \leq |E_n \cap [X,n)| + O(X^{-\Omega(1)})$ and $|\rho| \leq o(1)$. In particular, $n^qP_q(n) \leq |E_n|+O(1)$.
\end{theorem}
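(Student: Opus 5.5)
Starting from the recursion \eqref{eq:recursion}, we write $p_n(m) = \frac1n\sum_{x} a_x(m)$, where $a_x(m) \vcentcolon= \P(\lcm(\ord(\pi_x),n-x)=m)$ is supported on $x$ with $n-x\mid m$. The plan is to show that $\sum_m(\sum_x a_x(m))^q$ is, up to $o(1)$ and the stated error $R$, equal to $\sum_{x<X}\sum_m a_x(m)^q$. Multiplying by $n^q$ then gives the claimed identity.

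\textbf{Step 1: discarding huge $m$.} As in Proposition \ref{prop:large_orders}, we split according to $c(\pi_n)$ and use Corollaries \ref{cor:cycle_divisors} and \ref{cor:tail_bounds} together with Lemmas \ref{lm:cycles_and_primes} and \ref{lm:prime_divisor_bound}. This should show that $\sum_{m>M}p_n(m)^q = o(n^{-q})$ for a threshold such as $M=\exp((\log n)^{C})$. The point is to bound $\max_{m>M}p_n(m)^{q-1}$ by $o(n^{1-q})$ times a total mass. For the remaining range $m\le M$ we have $h(m)\ll\log\log n$ and $\tau(m)\le n^{o(1)}$.

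\textbf{Step 2: separating the diagonal.} We expand $(\sum_x a_x(m))^q$. The diagonal-type main term is $\sum_{x<X}a_x(m)^q$; everything else must be controlled. First, Lemma \ref{lm:div_in_int_simple} shows that $m\le M$ has at most one divisor $n-x$ with $x<X$, since $X\ll\log n$ is tiny. Hence the small-$x$ part contributes a single term and there are no cross terms among small $x$. This gives the lower-bound direction $R\ge0$, because $(\sum a)^q\ge$ the single term, and the upper direction reduces to controlling the contribution of $x\ge X$.

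Next, we split $[X,n)$ dyadically, or more finely using Lemmas \ref{lm:div_in_int_simple} and \ref{lm:div_in_int_hard}, into intervals $I$ such that $m$ has few divisors $n-x$ with $x\in I$. On each interval we apply Hölder's inequality, losing a factor of (number of divisors in $n-I$)$^{q-1}$, and use the elementary inequality $(A+B)^q\le A^q+q2^{q-1}(A^{q-1}B+B^q)$ to handle the mixed terms. After exchanging sums, everything reduces to bounding the truncated collision sums
\[
S_x \vcentcolon= \sum_m a_x(m)^q,
\]
weighted by the interval losses.

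\textbf{Step 3: the key estimate for $S_x$.} This is the main obstacle. For non-exceptional $x\ge X$ we need $S_x\ll x^{-q+\varepsilon}$, with enough room that the interval losses $\tau(m;\cdot)^{q-1}$, which are at most $(\log n)^{o(1)}$-type quantities for large $x$ by Lemma \ref{lm:div_in_int_hard}, are absorbed. Since $x\ge X$, the tail sums $\sum_{x\ge X}x^{-q+\varepsilon}$ then give the $O(X^{-\Omega(1)})$ error.

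The argument for $S_x$ repeats the $n^{-q+o(1)}$ sketch from Section \ref{sec:outline}, with one adjustment. The event $a_x(m)$ forces every cycle length of $\pi_x$ to divide $g\vcentcolon=\gcd(\lcm(1,\dots,x),m)$. When $n-x\mid m$, only divisors of $\gcd(\lcm(1,\ldots,x),\,m)$ matter, and the relevant harmonic weight becomes $h(\gcd(\lcm(1,\ldots,x),n-x))$ times a factor coming from the part of $m/(n-x)$ that is $x$-smooth. Non-exceptionality makes this weight at most $c\log x/\log\log x$. Corollary \ref{cor:cycle_divisors} then kills permutations with $c(\pi_x)\gtrsim\log x/\log\log x$ with a power saving in $x$. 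The few-cycle case is handled by Lemma \ref{lm:cycles_and_primes}, applied to $m/\gcd(m,n-x)$-type factors, combined with the lower tail in Corollary \ref{cor:tail_bounds}.

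For exceptional $x$ we use only the trivial bound $\sum_m a_x(m)^q\le1$, together with the at-most-one-divisor structure from Step 2. Each exceptional $x\ge X$ therefore contributes at most $1+o(1)$, which yields $R\le|E_n\cap[X,n)|+O(X^{-\Omega(1)})$. Controlling the extra smooth part of $m/(n-x)$ in the harmonic weight is the delicate point. I would first try absorbing it via Corollary \ref{cor:reciprocal_divisors} applied to $\lcm(1,\ldots,x)$-divisors, noting that $h$ is submultiplicative.

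Finally, for the "in particular" clause we take $X$ to be a large constant. Each term of the main sum is at most $1$, so the whole right-hand side is at most $|E_n|+O(1)$.
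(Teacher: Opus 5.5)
Your overall architecture (discard large $m$, apply Hölder on ranges of $x$ with divisor-in-interval bounds, bound collision sums separately for exceptional and non-exceptional $x$) matches the paper's. There is, however, a genuine gap in how you treat the range just above $X$ and the exceptional $x$.

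First, the justification in Step~2 is wrong. Lemma~\ref{lm:div_in_int_simple} with $r=2$ only covers $m\le (n-X)^2/X^4$, and a general $m\le M$ can have several divisors $n-x$ with $x<X$ (e.g.\ $m=n(n-1)$). The missing idea is Landau's theorem. For $x<X_3=(\log n)^{C_3}$ with $C_3<2$, one has $\ord(\pi_x)\le e^{(1+o(1))\sqrt{x\log x}}=n^{o(1)}$. So every $a_x$ with $x<X_3$ is supported on $m\le n^{3/2}$, and there Lemma~\ref{lm:div_in_int_simple} with $r=2$ gives $\tau(m;n-X_3,n)\le 1$. Hence the supports are disjoint, and you get the \emph{exact} identity $\sum_m\bigl(\sum_{x<X_3}a_x(m)\bigr)^q=\sum_{x<X_3}\sum_m a_x(m)^q$ on a block reaching far beyond $X\ll\log n$. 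This is what lets each exceptional $x\in[X,X_3)$ cost at most $1$, additively, and the non-exceptional ones cost only $\sum_{x\ge X}x^{-1-c'}=O(X^{-c'})$.

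In your scheme, $[X,(\log n)^{O(1)})$ is instead covered by intervals on which the Hölder factor can be of order $(\log\log n/\log\log\log n)^{q-1}$ (take $m=\lcm(n-x_1,\ldots,n-x_r)$). Since $S_x\ge x^{-q}$ (from the $x$-cycle), the resulting bound is unbounded in $n$ when $X$ is a fixed constant, as in the ``in particular'' clause or in Theorem~\ref{thm:collision_prob_explicit}. Routing $O(1)$-size pieces through $(A+B)^q\le A^q+q2^{q-1}(A^{q-1}B+B^q)$ would likewise spoil the coefficient $1$ in front of $|E_n\cap[X,n)|$.

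Second, for exceptional $x\ge X_3$ the trivial bound $S_x\le 1$ is not enough. There is no one-divisor structure there, and the Hölder factor is again unbounded. Your claim that the losses are $(\log n)^{o(1)}$ is also false in the top range: there $m\le M$ can have $\exp(\Omega(\log n\log\log\log n/\log\log n))$ divisors in $(0,n]$ (Remark~\ref{rem:interval_bounds}). This is why the paper needs the scales $X_1,X_2$ together with Lemmas~\ref{lm:divisor_bound} and~\ref{lm:div_in_int_hard}.

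The paper's additional estimate is $\sum_{m\le M}a_x(m)^q\ll x^{-c'}$ for \emph{every} $x\ge X_3$, exceptional or not. For $m\le M$, the sharp constant in Corollary~\ref{cor:reciprocal_divisors} gives $h(m)\le(e^\gamma+o(1))\log\log n\le\eta\log x$ with $\eta<1$; this is exactly why one needs $C_3>e^\gamma$. Corollary~\ref{cor:cycle_divisors} then yields $\max_{m\le M}a_x(m)\ll x^{-\Omega(1)}$. Together with Lemma~\ref{lm:except_bounds} (at most two exceptional $x$ per range), everything with $x\ge X_3$ is $O((\log n)^{-\Omega(1)})$ and goes into $\rho$, not $R$.

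Two smaller points.
\begin{itemize}
\item Your threshold $M=\exp((\log n)^C)$ is incompatible with $\tau(m)\le n^{o(1)}$. One needs $M=n^{C\log\log n/\log\log\log n}$, and discarding $m>M$ then requires the PNT-strength Lemma~\ref{lm:prime_divisor_bound}.
\item Bounding $h(m/(n-x))$ via divisors of $\lcm(1,\ldots,x)$ only gives $\ll\log x$, which is too large. You must first restrict to $c(\pi_x)\le B\log x$, so that $m/(n-x)\le x^{B\log x}$ and $h(m/(n-x))\ll\log\log x$. After that, the cycle cut-off in Corollary~\ref{cor:cycle_divisors} must be $\asymp\log x$, not $\log x/\log\log x$.
\end{itemize}
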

We now begin working towards a proof of Theorem \ref{thm:except_collision}; this will occupy the bulk of this section. To this end, it will be convenient to rescale the function $p_n$ by defining $\widetilde{p}_n \vcentcolon= np_n$. We also introduce some global parameters that will play an important role in our arguments. First, we define 
\[
    M = M(n) \vcentcolon= n^{\frac{C\log\log n}{\log\log\log n}},
\]
where $C > 0$ is a suitably large absolute constant. Next, we define the hierarchy of decreasing scales $X_j = X_j(n)$ for $0 \leq j \leq 4$ by
\[
    X_0 \vcentcolon= n, \quad X_1 \vcentcolon= \exp\Bigl(\frac{C_1\log n}{\log\log\log n}\Bigr), \quad X_2 \vcentcolon= \exp\Bigl(\frac{C_2\log n}{\log\log n}\Bigr), \quad X_3 \vcentcolon= (\log n)^{C_3}, \quad X_4 \vcentcolon= 0,
\]
where $C_1, C_2 > 0$ are suitable (large) constants, and $C_3$ is any fixed constant belonging to the range $(e^\gamma, 2)$. Note that we have $e^\gamma \approx 1.781 < 2$, so a suitable choice of $C_3$ indeed exists.\footnote{It seems plausible that our arguments could be refined so as not to rely on this numerical fact, however, we will not pursue this here.} As will soon become clear, the precise quantitative form of the parameters just introduced will be rather important for our arguments.

The first step is to eliminate very large orders by showing that they have a negligible contribution to the collision probability. Since the length of each jump of the Markov chain $(Z_j^{(n)})_{j\geq0}$ is a divisor of the order, this will limit the number of possibilities for each step.

\begin{lemma}
\label{lm:large_orders}
For all $m > M$ we have $p_n(m) \ll n^{-(q+1)}$. In particular,
\[
    \lVert \wt{p}_n\rVert_q^q = \lVert \wt{p}_n1_{(0, M]}\rVert_q^q + O(1/n).
\]
\end{lemma}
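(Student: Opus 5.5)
Fix $m > M$. We split the event $\{\ord(\pi_n) = m\}$ according to the number of cycles, following the proof of Proposition \ref{prop:large_orders}, but with thresholds adapted to the target bound $n^{-(q+1)}$:
\[
    \P(\ord(\pi_n)=m) \leq \P(c(\pi_n) > C'\log n) + \sum_{\ell \leq C'\log n}\P(c(\pi_n)=\ell,\ m\mid\ord(\pi_n)).
\]
By Corollary \ref{cor:tail_bounds}, choosing $C'$ large enough in terms of $q$ that $Q(C'/2) > q+1$ gives $\P(c(\pi_n) > C'\log n) \ll n^{-(q+1)}$.

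For the remaining range $\ell \leq C'\log n$, we apply Lemma \ref{lm:cycles_and_primes}:
\[
    \P(c(\pi_n)=\ell,\ m\mid \ord(\pi_n)) \leq \frac{\ell^{\omega(m)}}{m}.
\]
We may assume $m \leq n^{C'\log n}$, since otherwise $\ord(\pi_n)\le n^{c(\pi_n)}$ shows the event is empty. Then $\log m \leq C'\log^2 n$, so $\log\log m \leq 2\log\log n + O(1)$. By Lemma \ref{lm:prime_divisor_bound},
\[
    \omega(m)\log \ell \leq \Bigl(\frac{\log m}{\log\log m}+O\Bigl(\frac{\log m}{(\log\log m)^2}\Bigr)\Bigr)(\log\log n + O(1)).
\]
Since $m > M$, we also have $\log\log m \geq \log\log n + \log\log\log n - \log\log\log\log n + O(1)$. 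The ratio $\log\ell/\log\log m$ is therefore at most $1 - (1+o(1))\log\log\log n/\log\log n$. This is the delicate point, and the main obstacle: the saving is only of relative size $\log\log\log n/\log\log n$. It gives
\[
    \frac{\ell^{\omega(m)}}{m} \leq \exp\Bigl(-(1+o(1))\log m\cdot\frac{\log\log\log n}{\log\log n}\Bigr).
\]
On the other hand, $\log m \geq \log M = C\log n\,\log\log n/\log\log\log n$. So each summand is at most $n^{-(1+o(1))C}$, and summing over the $O(\log n)$ values of $\ell$ gives $\ll n^{-(q+1)}$ once $C$ is large in terms of $q$. The quantitative form of $M$ is tuned exactly so that this saving beats $n^{q+1}$. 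We must check that the $O(\log m/(\log\log m)^2)$ error in Lemma \ref{lm:prime_divisor_bound} contributes only $O(\log m/\log\log n)$ to the exponent. This is smaller than the saving by a factor $\log\log\log n$, so it is harmless. (The restriction $m \leq n^{C'\log n}$ is used only to get $\log\ell \leq \log\log n + O(1)$, which holds regardless, and to compare $\log\log m$ with $\log\log n$.)

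For the "in particular" statement, sum over $m > M$:
\[
    \sum_{m>M}\wt p_n(m)^q \leq n^q\max_{m>M}p_n(m)^{q-1}\sum_m p_n(m) \ll n^q\cdot n^{-(q+1)(q-1)}.
\]
This is not immediately $O(1/n)$ for $q$ close to $1$. To fix this, we instead rerun the argument with target exponent $n^{-A}$ for $A = (q+1)/(q-1)+1$, which is allowed since $C, C'$ may depend on $q$. Then $\max_{m>M} p_n(m)^{q-1}\ll n^{-(q+1)}$, and the sum is $\ll n^q \cdot n^{-(q+1)} = 1/n$. The same computation with a larger fixed constant also yields the stated pointwise bound.
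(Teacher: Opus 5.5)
Your proof is correct and follows the paper's route: you split by $c(\pi_n)$, handle the large-cycle-count range with Corollary \ref{cor:tail_bounds}, and bound the rest via Lemma \ref{lm:cycles_and_primes} and Lemma \ref{lm:prime_divisor_bound}, playing the relative saving $\log\log\log n/\log\log n$ in $1-\log\ell/\log\log m$ against $\log M$ (the paper does the same computation parametrised by $m=n^{\kappa}$). Your extra care in the ``in particular'' step is warranted: the paper's displayed bound $n^q\max_{m>M}p_n(m)$ drops the exponent $q-1$ and is only justified for $q\geq 2$, and your repair is legitimate because $C$ and the tail constant may depend on $q$, so enlarging them gives $p_n(m)\ll n^{-A}$ for the needed $A$.
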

\begin{proof}
    The second statement follows from the first via the decomposition
    \[
        \lVert \wt{p}_n\rVert_q^q = \sum_{m\leq M}\wt{p}_n(m)^q + \sum_{m>M}\wt{p}_n(m)^q
    \]
    and the observation that the second sum may be upper bounded by
    \[
        \Bigl(\max_{m>M}\wt{p}_n(m)\Bigr)^{q-1}\sum_{m>M}\wt{p}_n(m) \leq n^q\max_{m>M}p_n(m).
    \]
    Hence, it suffices to prove the first statement. For a suitably large constant $B > 0$, we have
    \[
        p_n(m) \leq \P(c(\pi_n) > B\log n) + \sum_{\ell \leq B\log n}\P(c(\pi_n) = \ell,\ m \mid \ord(\pi_n)).
    \]
    If $B$ is large enough, Corollary \ref{cor:tail_bounds} implies that the first summand is $O(n^{-(q+1)})$. The other summands can be bounded via Lemma \ref{lm:cycles_and_primes}. Write $m = n^{\kappa}$ and $\omega(m) = (1+\delta)\frac{\log m}{\log\log m}$, where 
    \begin{equation}\label{eq:kappa_lower_bound}
        \kappa > \frac{C\log\log n}{\log\log\log n}
    \end{equation}
    and, by Lemma \ref{lm:prime_divisor_bound}, $\delta \leq O(1/\log\log m)$. For $\ell \leq B\log n$, we have $\log\ell \leq \log\log n + O(1)$, so it follows that
    \begin{align*}
        \P(c(\pi_n) = \ell,\ m \mid \ord(\pi_n)) &\leq \exp\Bigl((1+\delta)\frac{\log m}{\log\log m}\cdot\log\ell - \log m\Bigr)\\
        &= \exp\Bigl(-\frac{\log m}{\log\log m}(\log\log m - \log\ell - \delta\log\ell)\Bigr)\\
        &\leq \exp\Bigl(-\frac{\kappa\log n}{\log\log n + \log\kappa}\cdot(\log\kappa - O(1))\Bigr).
    \end{align*}
    The latter is a decreasing function of $\kappa$, and a short calculation using \eqref{eq:kappa_lower_bound} shows that choosing $C>q+2$ gives an upper bound of $O(n^{-(q+2)})$. Summing over all $\ell$ in this range then yields the desired bound $p_n(m) \leq O(n^{-(q+1)})$.
\end{proof}

\begin{remark}
\label{rem:large_orders}
The above proof is exactly the place where the full quantitative strength of Lemma \ref{lm:prime_divisor_bound} is required. Indeed, bounds for the error term available without recourse to the prime number theorem, such as
\[
    O\Bigl(\frac{\log m\log\log\log m}{(\log\log m)^2}\Bigr),
\]
are not sufficient to close the argument.
\end{remark}

One might think that the shape of the parameter $M$ is an artefact of the proof of Lemma \ref{lm:large_orders}, and that one already has $p_n(m) \leq o(n^{-q})$ for much smaller values of $m$. However, as the following result shows, this is not the case; in fact, Lemma \ref{lm:large_orders} is essentially best possible.

\begin{proposition}
\label{prop:large_prob}
For any $q \in (1,\infty)$ and $\varepsilon > 0$, there are infinitely many $n$ for which there exists
\[
    m \geq n^{(\frac{q-1}{2}-\varepsilon)\frac{\log\log n}{\log\log\log n}}
\]
such that $p_n(m)\gg n^{-q}$.
\end{proposition}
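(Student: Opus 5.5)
The plan is to show that the bound of Lemma \ref{lm:cycles_and_primes}, $\P(c(\pi_n)=\ell,\ m\mid\ord(\pi_n))\le \ell^{\omega(m)}/m$, is essentially attained for a suitable squarefree $m$. I would then average over $n$ to remove the constraint that the cycle lengths sum to $n$. Concretely, fix parameters $N$ (large), $\ell$ (number of cycles) and $y$. Let $m$ be the product of $\omega$ primes, all of roughly the same size, taken from a range $(y,2y]$. Consider permutations consisting of $\ell$ cycles whose lengths are $d_1,\dots,d_\ell$, where the $d_i$ are the products of the primes in the blocks of an ordered partition of these $\omega$ primes into $\ell$ nonempty blocks. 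Every such permutation has order exactly $m$, since the $d_i$ are pairwise coprime with product $m$. The probability of a given cycle type with distinct lengths $d_i$ (and no fixed points beyond those we allow) is $\prod_i 1/d_i = 1/m$. The number of partitions is about $\ell^{\omega}/\ell!$, which matches the count in Lemma \ref{lm:cycles_and_primes}.

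The constraint $\sum d_i = n$ is handled by letting $n$ vary. To gain flexibility I would allow an additional block of cycles, say fixed points or cycles of length $1$, which does not change the order and accounts for the remainder $n-\sum d_i$. By Cauchy's formula, the probability of a prescribed cycle type with $f$ fixed points and the given distinct lengths $d_i$ is $\frac{1}{f!\,m}$. The cleanest choice is to restrict to $f\le$ some small bound, or better to insert a single extra cycle of length $d_0\mid m$. Summing over $n\in[N,2N]$ then gives
\[
\sum_{N\le n\le 2N} p_n(m)\ \gtrsim\ \frac{1}{m}\cdot\#\Bigl\{\text{partitions with }\sum_i d_i\in[N,2N]\Bigr\}.
\]
By pigeonhole there is an $n$ for which $p_n(m)$ is at least $1/N$ times the right-hand side. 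It remains to arrange that a positive proportion of the $\approx\ell^{\omega}/\ell!$ partitions have all $d_i\le N$ and total sum in $[N,2N]$. Since the primes lie in $(y,2y]$ and the blocks have size about $\omega/\ell$, each $d_i$ is about $y^{\omega/\ell}$. So we want $y^{\omega/\ell}\approx N/\ell$, and typical (multinomially balanced) partitions satisfy this up to bounded factors.

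The final step is parameter optimisation. We need $\ell^{\omega}/(\ell!\, m N)\gg N^{-q}$, i.e.
\[
\omega\log\ell-\log m-\log \ell! \ \ge\ -(q-1)\log N-O(1),
\]
with $\log m=\omega\log y\approx \ell\log N$, and we want $\kappa=\log m/\log N\approx \ell$ as large as possible. Writing $\omega\approx \ell\log N/\log y$, the condition becomes roughly $\ell\log N\,(\log\ell/\log y-1)\ge -(q-1)\log N$. This forces $\log y$ to be only slightly larger than $\log \ell$. The natural choice is then $y\approx \ell^{1+\theta}$ with $\ell\approx\log N$-ish scale. After optimising, the achievable $\ell$ should be of order $\frac{q-1}{2}\frac{\log\log N}{\log\log\log N}$. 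The factor $\frac12$ is expected to arise from the loss of $\log\ell!$ and from the requirement that primes in $(y,2y]$ be plentiful enough (Lemma \ref{lm:chebyshev}) to supply $\omega$ of them.

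I expect the main obstacle to be this last bookkeeping. One has to verify that, with $\omega\approx \ell\log N/\log y$ primes of size $y$ and $\ell$ in the stated range, the balanced partitions really carry a constant fraction of the $\ell^{\omega}$ weight while keeping every $d_i\le N$. One must also check that the losses ($\ell!$, the averaging factor $1/N$, and fluctuations of block sizes, which change $d_i$ by factors $y^{O(\sqrt{\omega/\ell})}$) cost at most $N^{\varepsilon\log\log N/\log\log\log N}$. If the balancing fails, my first fallback is to fix the block sizes exactly (each block has precisely $\omega/\ell$ primes). This loses only a factor $\ell^{O(\ell)}\,\omega^{O(\ell)}$ relative to $\ell^\omega$, which is negligible on the relevant scale.
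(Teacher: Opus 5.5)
\textbf{There is a genuine gap: the final parameter optimisation cannot succeed.} The obstruction is structural, not a matter of bookkeeping. Your own necessary condition, with the $\log\ell!$ term dropped, is
\[
\ell\Bigl(1-\frac{\log\ell}{\log y}\Bigr)\le q-1, \quad\text{i.e.}\quad \ell\log(y/\ell)\le (q-1)\log y .
\]
You also need $\omega$ distinct primes in $(y,2y]$, and there are only $\asymp y/\log y$ of them. With $\omega\approx \ell\log N/\log y$ this forces $y\gg \ell\log N$, so $\log(y/\ell)\ge \log\log N-O(1)$. The condition then reads
\[
\ell\le (q-1)\Bigl(1+\frac{\log\ell}{\log(y/\ell)}\Bigr),
\]
which gives $\ell=O_q(1)$. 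Since each $d_i\le 2N$, you have $\kappa=\log m/\log N\lesssim \ell=O_q(1)$, not $\frac{q-1}{2}\frac{\log\log N}{\log\log\log N}$.

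Your suggested choice $y=\ell^{1+\theta}$ shows the same thing. It needs $\theta\lesssim (q-1)/\ell$, i.e.\ $y\le \ell+O(\log\ell)$, which leaves fewer than $\ell$ primes in $(y,2y]$. Put simply, $\ell$ cycles of length $\approx N/\ell$ cost you $m\approx N^{\ell}$, while you have only $\approx\ell^{\omega}/\ell!$ configurations, far too few. Your fallback of fixing block sizes exactly does not help, because the obstruction is the size of this count, not its fluctuations.

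\textbf{Missing idea 1: count partitions into many small blocks.} The combinatorial gain has to come from arbitrary set partitions of the primes, counted by Bell numbers with $\log B_t=t(\log t-\log\log t+O(1))$. This is vastly more than $\ell^{\omega}/\ell!$ for small $\ell$. Typical such partitions have largest block of size about $e\log t$, so their block products sum to only $n^{o(1)}$.

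\textbf{Missing idea 2: pigeonhole over a short window, not $[N,2N]$.} Because the small blocks sum to so little, $n$ must be carried by one fixed giant cycle of length $\approx n$. That cycle already costs about $\log n$ in $\log m$ without adding any configurations. If you also pay a factor $N\approx n$ for averaging over $[N,2N]$, then even with the right configurations the requirement becomes roughly $2t\log\log t\lesssim (q-2)\log n$, which yields nothing for $q\le 2$.

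\textbf{What the paper does.} It takes $m=\prod_{j\le k}q_j$ and lets the first $\ell$ primes form a single fixed cycle of length $\prod_{j\le\ell}q_j$. The remaining $t=k-\ell$ primes are partitioned arbitrarily. By Sachkov's result and Markov's inequality, at least two thirds of partitions have all blocks of size $\le 10\log t$. Hence $\Sigma(\mathcal{P})$ lies in a window of length $t\,q_k^{10\log t}=n^{o(1)}$, and pigeonholing there gives $p_n(m)\gg B_t/(m\,t\,q_k^{10\log t})$. The factor $\frac12$ comes from two losses of size $k\log\log k$:
\begin{itemize}
\item one from $\vartheta(q_k)=k(\log k+\log\log k+O(1))$, since the primes have size $k\log k$ rather than $k$;
\item one from the $-t\log\log t$ term in $\log B_t$.
\end{itemize}
It does not come from $\log\ell!$.
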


Proposition \ref{prop:large_prob} is not logically necessary for our main results, and its proof can be read independently of the rest of this section.

\begin{proof}[Proof of Proposition \ref{prop:large_prob}.]
    Let $k > \ell$ be positive integers and set $t \vcentcolon= k-\ell$; we will assume that $k,\ell, t$ are sufficiently large. For $j \in \N$ let $q_j$ denote the $j$-th smallest prime, and set $m \vcentcolon= \prod_{j=1}^{k}q_j$. Let $\Pi$ be the set of all partitions of $[k]\setminus[\ell]$ and let $\mathcal{P} \in \Pi$ be such a partition. Consider the quantity
    \[
        \Sigma(\mathcal{P}) \vcentcolon= \sum_{P\in\mathcal{P} \cup \{[\ell]\}}\prod_{j\in P}q_j.
    \]
    According to Cauchy's formula \cite[Theorem 1.2]{ford}, the probability that $\pi_{\Sigma(\mathcal{P})}$ has cycle type
    \[
        \Bigl\{\prod_{j\in P}q_j\ \Big|\ P \in \mathcal{P} \cup \{[\ell]\}\Bigr\}
    \]
    is exactly $1/m$. Moreover, if this happens, then $\pi_{\Sigma(\mathcal{P})}$ has order exactly $m$. Therefore, for $n \in \N$ we have the lower bound
    \[
        p_n(m) \geq \frac{|\{\mathcal{P} \in \Pi \mid \Sigma(\mathcal{P}) = n\}|}{m}.
    \]
    The idea now is to use the pigeonhole principle to locate many partitions $\mathcal{P}$ achieving the same value of $\Sigma(\mathcal{P})$. To carry out this strategy, we first have to restrict to partitions $\mathcal{P}$ such that $\Sigma(\mathcal{P})$ belongs to a narrower range. Luckily, the largest part of a typical partition $\mathcal{P} \in \Pi$ is of size asymptotically $e\log t$, so most partitions will satisfy this.\footnote{A much cruder bound on the size of the largest part would be sufficient.} More precisely, defining
    \[
        \mu(\mathcal{P}) \vcentcolon= \max_{P\in\mathcal{P}}|P|,
    \]
    the results of \cite{sachkov} imply that $\E_{\mathcal{P} \in \Pi} \mu(\mathcal{P}) \sim e\log t$. In particular, Markov's inequality implies that 
    \[
        \P_{\mathcal{P}\in\Pi}(\mu(\mathcal{P}) \leq 10\log t) \geq \frac{2}{3}.
    \]
    Note that if $\mu(\mathcal{P}) \leq 10\log t$, then for all $P \in \mathcal{P}$ we have $\prod_{j\in P}q_j \leq q_k^{10\log t}$, so $\Sigma(\mathcal{P})$ certainly belongs to the interval
    \[
        I \vcentcolon= \Bigl(\prod_{j=1}^{\ell}q_j,\prod_{j=1}^{\ell}q_j + tq_k^{10\log t}\Bigr].
    \]
    Hence, we have
    \[
        \sum_{n\in I}p_n(m) \geq \frac{|\{\mathcal{P} \in \Pi \mid \Sigma(\mathcal{P}) \in I\}|}{m} \geq \frac{|\Pi|}{m}\P_{\mathcal{P}\in\Pi}(\mu(\mathcal{P}) \leq 10\log t) \geq \frac{2B_t}{3m},
    \]
    where $B_t$ denotes the $t$-th Bell number. Thus, by averaging, there exists $n \in I$ such that
    \[
        p_n(m) \geq \frac{2B_t}{3m|I|} \gg \frac{B_t}{mtq_k^{10\log t}}.
    \]
    We would like the latter quantity to be at least $\Omega(n^{-q})$, or equivalently that the following holds:
    \[
        \log m + (10\log q_k + 1)\log t \leq q\log n + \log B_t + O(1).
    \]
    Using that $\log m = \vartheta(q_k)$, $\log n \geq \vartheta(p_{\ell})$ together with the facts that $\vartheta(q_j)$ and $q_j$ have the asymptotic form $j(\log j+\log\log j + O(1))$ (see \cite{robin}) as well as that ${\log B_t = t(\log t - \log\log t + O(1))}$ (see \cite{de-bruijn}), a tedious albeit straightforward calculation shows that one may take
    \[
        \ell = \Biggl\lceil\frac{(\frac{2}{q-1}+\delta)k\log\log k}{\log k}\Biggr\rceil,
    \]
    where $\delta > 0$ is any constant. Finally, taking $\delta$ small enough and using ${n = \exp((1+o(1))\ell\log\ell)}$, $m = \exp((1+o(1))k\log k)$, it follows that $m > n^{(\frac{q-1}{2}-\varepsilon)\frac{\log\log n}{\log\log\log n}}$, as desired.
\end{proof}

In view of the recursive expression \eqref{eq:recursion}, we decompose $\wt{p}_n = \sum_{j=1}^{4}\wt{p}_n^{(j)}$, where for each $j \in [4]$ and $m \in \N$ we define
\begin{equation}\label{eq:recursion_range}
    \wt{p}_n^{(j)}(m) \vcentcolon= \sum_{X_j \leq x < X_{j-1}}\P(\lcm(\ord(\pi_x), n-x) = m).
\end{equation}
The next lemma encapsulates the key Hölder manipulation.

\begin{lemma}
\label{lm:cauchy_schwarz}
Fix any $j \in [4]$, $n \in \N$, $N > 0$ and write
\[
    T \vcentcolon= \max_{m\leq N}\tau(m;n-X_{j-1},n-X_j).
\]
Then for any $q \in (1, \infty)$ we have
\[
    \lVert\wt{p}_n^{(j)}1_{(0, N]}\rVert_q^q \leq T^{q-1}\sum_{X_j\leq x < X_{j-1}}\sum_{m\leq N}\P(\lcm(\ord(\pi_x), n-x) = m)^q.
\]
Moreover, if $T \leq 1$, we may remove the factor $T^{q-1}$ from the right-hand side to obtain an equality.
\end{lemma}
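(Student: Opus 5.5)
The plan is to apply Hölder's inequality pointwise in $m$ and then exchange the order of summation. Fix $m \le N$. By definition \eqref{eq:recursion_range}, $\wt{p}_n^{(j)}(m)$ is a sum over $x \in [X_j, X_{j-1})$ of the terms
\[
    a_x(m) \vcentcolon= \P(\lcm(\ord(\pi_x), n-x) = m).
\]
The first observation is that $a_x(m) = 0$ unless $n-x \mid m$, since $n - x$ always divides $\lcm(\ord(\pi_x), n-x)$. As $x$ ranges over $[X_j, X_{j-1})$, the number $n-x$ ranges over $(n-X_{j-1}, n-X_j]$. Hence the set $S_m$ of $x$ in the range with $a_x(m) \neq 0$ has size at most $\tau(m; n-X_{j-1}, n-X_j) \le T$. (For $j=1$ the point $x$ with $n-x=n$ is included, which matches the half-open convention $(a,b]$.)

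Next, apply Hölder's inequality with exponents $q$ and $q/(q-1)$ to the sum over $S_m$:
\[
    \wt{p}_n^{(j)}(m)^q = \Bigl(\sum_{x\in S_m} a_x(m)\Bigr)^q \le |S_m|^{q-1}\sum_{x\in S_m} a_x(m)^q \le T^{q-1}\sum_{X_j\le x<X_{j-1}} a_x(m)^q.
\]
Summing over $m \le N$ and interchanging the finite sums over $m$ and $x$ then gives the claimed inequality.

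For the equality case, suppose $T \le 1$. Then each $S_m$ has at most one element, so $\wt{p}_n^{(j)}(m)$ equals either $0$ or the single term $a_x(m)$. Consequently $\wt{p}_n^{(j)}(m)^q = \sum_x a_x(m)^q$ holds exactly for every $m \le N$, and summing over $m \le N$ gives equality without the factor $T^{q-1}$. If $T = 0$, both sides vanish.

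None of the steps poses a real obstacle. The only point requiring care is to note the support restriction $n-x \mid m$ before applying Hölder, so that the factor is $T^{q-1}$ rather than the full length of the $x$-range raised to the power $q-1$.
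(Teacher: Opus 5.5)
Your proof is correct and is essentially the paper's argument: apply Hölder pointwise in $m$, using that only $x$ with $n-x\mid m$ contribute, to get the factor $\tau(m;n-X_{j-1},n-X_j)^{q-1}\le T^{q-1}$; then sum over $m\le N$ and swap the sums, and note that equality holds when at most one term is nonzero. One small slip that does not affect the argument: the point $x=0$, i.e.\ $n-x=n$, lies in the range for $j=4$ (where $X_4=0$), not $j=1$.
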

\begin{proof}
    Taking $q$-th powers in \eqref{eq:recursion_range} and applying Hölder's inequality, noting that the summand is non-zero only if $n-x \mid m$, it follows that
    \[
        \wt{p}_n^{(j)}(m)^q \leq \tau(m;n-X_{j-1},n-X_j)^{q-1}\sum_{X_j \leq x < X_{j-1}}\P(\lcm(\ord(\pi_x), n-x) = m)^q.
    \]
    If the first factor on the right-hand side is at most $1$, then we may remove it to obtain an equality. The conclusion follows on summing over all $m \leq N$ and interchanging the order of summation.
\end{proof}

Lemma \ref{lm:cauchy_schwarz} reduces matters to two separate issues: bounding the number of divisors of potential orders in short intervals and understanding the collision probability for the random variable $\lcm(\ord(\pi_x),n-x)$. We address these issues in turn, starting from the second one. Since ${\lcm(\ord(\pi_x),n-x)}$ is a function of $\ord(\pi_x)$, its collision probability cannot be smaller than that of $\ord(\pi_x)$. However, unless $n-x$ has exceptionally rich multiplicative structure, one expects that taking the least common multiple with $n-x$ does not significantly affect the collision probability. The following lemma makes this idea precise by providing bounds depending on whether $x$ is exceptional or not.

\begin{lemma}
\label{lm:collision_prob}
Let $q \in (1,\infty)$ be fixed. There is an absolute constant $c' > 0$ such that for $x \in \{0,1,\ldots,n-1\}$ we have
\begin{equation}\label{eq:cases_bound}
    \sum_{m \leq M}\P(\lcm(\ord(\pi_x), n-x) = m)^q \ll \begin{cases}1/x^{1+c'} & \text{if $x$ is not exceptional}\\1/x^{c'} & \text{if $x \geq X_3$}\end{cases}.
\end{equation}
\end{lemma}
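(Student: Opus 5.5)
Write $Y_x \vcentcolon= \lcm(\ord(\pi_x),n-x)$, $g \vcentcolon= \gcd(\lcm(1,\ldots,x),n-x)$ and $S(x) \vcentcolon= \sum_{m\le M}\P(Y_x=m)^q$. The plan is to bound $S(x)$ by $\max_{m\le M}\P(Y_x=m)^{q-1}$, and to bound each point probability $\P(Y_x=m)$ by splitting according to $c(\pi_x)$, as in the proof of Proposition \ref{prop:large_orders}. The first step is a reduction. On the event $Y_x = m$, every cycle length of $\pi_x$ divides $m$ and is at most $x$, so it divides $\lcm(1,\ldots,x)$. Moreover $\ord(\pi_x)$ is determined up to the primes dividing $n-x$. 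So the relevant quantity is not $h(m)$ but something like $h(\gcd(m,\lcm(1,\ldots,x)))$. Writing $m = (n-x)m'$ with $m' \mid \lcm(1,\ldots,x)$, the cycle lengths must divide $\lcm(g,m')$.

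For the small-cycle range I would invoke Lemma \ref{lm:cycle_lengths} with $I = \{d \le x : d \mid m\}$. The key estimate is
\[
\sum_{d\in I}\frac{1}{d} \le h\bigl(\gcd(m,\lcm(1,\ldots,x))\bigr)\le h(g)\,h(m'),
\]
using submultiplicativity of $h$ and $\gcd(m,\lcm(1,\ldots,x)) \mid g\,m'$. For the second factor, $h(m') \le \prod_{p\le x}(1-1/p)^{-1} \ll \log x$ by Mertens. This is too crude, since it would kill any saving, so I will instead use that $m'$ also divides $\ord(\pi_x)$. Lemma \ref{lm:cycles_and_primes} then gives probability at most $\ell^{\omega(m')}/m'$ that $m' \mid \ord(\pi_x)$. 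I therefore split by the size of $m'$. If $m' > x^{A}$ for a large constant $A$, Lemma \ref{lm:cycles_and_primes} with $\ell \le B\log x$ cycles gives a bound of $x^{-A+o(1)}$, because $\omega(m') \ll \log m'/\log\log m'$ and $m'$ is built from primes at most $x$. The number of cycles beyond $B\log x$ is handled by Corollary \ref{cor:tail_bounds}. If $m' \le x^A$, then $h(m') \le (e^\gamma+o(1))\log\log x$ by Corollary \ref{cor:reciprocal_divisors}. In the non-exceptional case this gives $\sum_{d\in I}1/d \le c\log x\cdot O(\log\log x)/\log\log x \le c''\log x$ with $c''$ small. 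Corollary \ref{cor:cycle_divisors}, summed over $\ell$, then yields
\[
\P\bigl(Y_x=m,\ c(\pi_x)\ge \lambda\log x\bigr)\le \frac{1}{x}\sum_{\ell \ge \lambda\log x}\Bigl(\frac{ec''\log x}{\ell-1}\Bigr)^{\ell-1} \ll x^{-1-c_0}
\]
for a fixed small $\lambda$ once $c$ is small enough. The small-cycle event is handled by the lower tail bound of Corollary \ref{cor:tail_bounds}, $\P(c(\pi_x)\le\lambda\log x)\ll x^{-Q(\lambda)}$ with $Q(\lambda)$ close to $1$. That alone is not enough, since it only gives a pointwise bound near $x^{-1+\varepsilon}$; combining it with the Lemma \ref{lm:cycles_and_primes} argument on $m'$ is what recovers a full power saving.

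Putting these together, $\max_m\P(Y_x=m)\ll x^{-1-c_1}$ for the non-exceptional case. It is not enough to multiply $\max_m\P(Y_x=m)^{q-1}$ by the total mass $1$ (that would give only $x^{-(q-1)(1+c_1)}$), so I write
\[
S(x)\le \max_m \P(Y_x=m)^{q-1}\sum_m\P(Y_x=m),
\]
and use the small-$m'$ part more carefully. The cleaner route is to show $\max_m\P(Y_x=m)\le x^{-1+o(1)}$ in general and $\ll x^{-1-c_1}$ off an event of probability $x^{-c_2}$. This is the step I expect to be the main obstacle: getting an exponent strictly above $1$ after summing over $m$. The plan is to exploit that the event $c(\pi_x)\le\lambda\log x$ itself has probability $x^{-Q(\lambda)}$, with the pointwise bound $x^{-1+o(1)}$ there. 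That gives a combined exponent $(q-1)(1-o(1))+Q(\lambda)$, which exceeds $1$ once $\lambda$ is small, since $Q(\lambda)\to1$; the remaining mass contributes $(x^{-1-c_1})^{q-1}$ times total mass, which is too weak for $q$ near $1$, so there I would further split $m$ by $m'$ and sum Lemma \ref{lm:cycles_and_primes} bounds over the $\tau$-many $m$ sharing a given $m'$.

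For $x\ge X_3$ without the non-exceptional hypothesis, the bound only needs to be $x^{-c'}$. Here $h(g)\le h(n-x)\le (e^\gamma+o(1))\log\log n$, while $\log x\ge C_3\log\log n$ with $C_3>e^\gamma$. This gives $\sum_{d\in I}1/d\le(e^\gamma/C_3+o(1))\log x\cdot h(m')$, up to the $m'$ factor. That is exactly the situation where the choice $C_3\in(e^\gamma,2)$ is designed to give a small power saving through the same large/small-cycle split; a saving of $x^{-c'}$ suffices.
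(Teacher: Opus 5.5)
Your skeleton matches the paper's: a split by $c(\pi_x)$, Corollary \ref{cor:cycle_divisors} with $h(gm')\le h(g)h(m')$ in the middle range, $q>1$ plus the lower tail for few cycles, and $C_3>e^\gamma$ for $x\ge X_3$. However, three steps fail as written.

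\textbf{The split by the size of $m'$.} It rests on a false claim. Lemma \ref{lm:cycles_and_primes} with $\ell$ of order $\log x$ does not give $x^{-A+o(1)}$ for $m'>x^A$. Take $m'$ to be the product of the primes up to $A\log x$. It divides $\lcm(1,\ldots,x)$, has size $x^{A+o(1)}$ and satisfies $\omega(m')\sim A\log x/\log\log x$, so $\ell^{\omega(m')}/m'=x^{o(1)}$. That lemma only saves a power once $\log m'\gg \log x\log\log x/\log\log\log x$ (compare Lemma \ref{lm:large_orders}). The split is also unnecessary. On $\{c(\pi_x)\le B\log x\}$ you have $m'\le \ord(\pi_x)\le x^{B\log x}$, so Corollary \ref{cor:reciprocal_divisors} gives $h(m')\ll\log\log x$ directly. (Also, cycle lengths divide $gm'$, not $\lcm(g,m')$.)

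\textbf{The many-cycle range for $q$ near $1$.} You leave the range $c(\pi_x)>\lambda\log x$ open in this case. Your proposed remedy is vacuous: since $m=(n-x)m'$, distinct $m$ never share an $m'$. The missing observation is that the saving there is not a fixed $x^{-1-c_0}$. With $h(gm')\le c''\log x$ and $c''=O(c)$, each $\ell$ with $\ell-1\ge\lambda\log x$ contributes at most $\frac{1}{x}(ec''/\lambda)^{\lambda\log x}$. Since $c$ may be taken small in terms of $\lambda$ and $q$, this is at most $x^{-K}$ for any prescribed $K$.

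So split $\P(Y_x=m)$ over the three cycle ranges using $(a+b+d)^q\le 3^{q-1}(a^q+b^q+d^q)$. The middle range then contributes at most $\max_m\P(\cdot)^{q-1}\le x^{-K(q-1)}\le x^{-2}$. The few-cycle range contributes at most $\P(c(\pi_x)\le\lambda\log x)^q\ll x^{-qQ(\lambda)}\le x^{-1-c'}$, with no pointwise bound needed. The tail contributes $O(x^{-2})$. This is exactly the paper's argument. Your intermediate claim $\max_m\P(Y_x=m)\ll x^{-1-c_1}$ is false anyway: an $x$-cycle gives $\P(Y_x=\lcm(x,n-x))\ge 1/x$.

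\textbf{The case $x\ge X_3$.} Here the leftover factor $h(m')$ is fatal, not cosmetic. Since $C_3<2$, we have $C_3/e^\gamma<1.13$, so $(e^\gamma/C_3)h(m')>1$ as soon as $m'$ is even, and Corollary \ref{cor:cycle_divisors} then gives nothing. You must not factor $m$ here; use only $m\le M$:
$h(\gcd(m,\lcm(1,\ldots,x)))\le h(m)\le (e^\gamma+o(1))\log\log M=(e^\gamma+o(1))\log\log n\le \eta\log x$
for a fixed $\eta\in(e^\gamma/C_3,1)$. Then for every $\ell\ge 2$, writing $\ell-1=\kappa\log x$, Corollary \ref{cor:cycle_divisors} gives at most $\frac{1}{x}(e\eta/\kappa)^{\kappa\log x}\le x^{\eta-1}$. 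Summing over $\ell\le B\log x$ and adding the upper tail gives $\max_{m\le M}\P(Y_x=m)\ll x^{-(1-\eta)/2}$. The bound $S(x)\le \max_{m\le M}\P(Y_x=m)^{q-1}$ then finishes the case, with no few-cycle/many-cycle split needed.
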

\begin{proof}
    We may assume throughout the proof that $x$, and hence also $n$, is sufficiently large. Write $W \vcentcolon= \lcm(\ord(\pi_x),n-x)$, suppressing the dependence on $n$ and $x$ for brevity. Let $B$ be a value of $\lambda_2$ in Corollary \ref{cor:tail_bounds} such that $Q(\lambda_2/2) \geq 2$. Suppose first that $x$ is not exceptional and set $L \vcentcolon= \lceil c'\log x\rceil$ for a suitably small constant $c' > 0$. Dividing into cases according to whether $c(\pi_x)$ belongs to $I_1 \vcentcolon= (0,L]$, $I_2 \vcentcolon= (L, B\log x]$ or $I_3 \vcentcolon= (B\log x,\infty)$ and applying Hölder's inequality, we may upper bound the left hand side of \eqref{eq:cases_bound} by
    \begin{equation*}
        3^{q-1}\sum_{j=1}^{3}T_j, \quad \text{where} \quad T_j \vcentcolon= \sum_{m\leq M}\P(W = m, c(\pi_x) \in I_j)^q.
    \end{equation*}
    For $j \in \{1,3\}$, we have
    \[
        T_j \leq \Bigl(\sum_{m \leq M}\P(W=m, c(\pi_x)\in I_j)\Bigr)^q \leq  \P(c(\pi_x) \in I_j)^q.
    \]
    Hence, by choice of $B$, we have $T_3 \ll x^{-2}$. Likewise, Corollary \ref{cor:tail_bounds} implies that by choosing $c' > 0$ small enough, one can achieve
    \[
        T_1 \ll 1/x^{1+c'}.
    \]
    It therefore remains to bound $T_2$. To this end, note that $c(\pi_x) \leq B\log x$ implies $\ord(\pi_x) \leq x^{B\log x}$, so we have
    \begin{equation*}
        T_2 \leq \max_{m \leq x^{B\log x}}\P\Bigl(\frac{\ord(\pi_x)}{\gcd(\ord(\pi_x),n-x)} = m,\ c(\pi_x) \in I_2\Bigr)^{q-1}.
    \end{equation*}
    If the event on the right-hand side happens, $\ord(\pi_x)$ must divide $m\gcd(\lcm(1,\ldots,x),n-x)$. Thus, by Corollary \ref{cor:cycle_divisors}, we have
    \[
        \P\Bigl(\frac{\ord(\pi_x)}{\gcd(\ord(\pi_x),n-x)} = m,\ c(\pi_x) \in I_2\Bigr) \leq \sum_{L<\ell\leq B\log x}\frac{1}{x}\Biggl(\frac{eh\bigl(m\gcd(\lcm(1,\ldots,x),n-x)\bigr)}{\ell-1}\Biggr)^{\ell-1}.
    \]
    The function $h$ is (completely) submultiplicative, i.e.\ satisfies $h(m_1m_2) \leq h(m_1)h(m_2)$ for all $m_1,m_2$. Note that for $m \leq x^{B\log x}$, Corollary \ref{cor:reciprocal_divisors} implies that $h(m) \ll \log\log x$. Hence, if $c > 0$ is chosen to be sufficiently small, we have
    \[
        h\bigl(m\gcd(\lcm(1,\ldots,x),n-x)\bigr) \leq h(m)\cdot\frac{c\log x}{\log\log x} \leq c''\log x,
    \]
    where $c'' > 0$ is an arbitrarily small absolute constant. Thus, by choosing $c''$ to be small enough, we may achieve that $T_2 \ll x^{-2}$ say. This concludes the argument in the case when $x$ is exceptional.

    We now move on to the case when $x$ is only assumed to be at least $X_3$. The argument in this case is similar; we will show that in fact
    \[
        \P(W = m) \ll 1/x^{\Omega(1)}
    \]
    uniformly for all $m \leq M$. To this end, note that the left-hand side in the above display is at most
    \begin{equation}\label{eq:except_decomp}
        \P(c(\pi_x) > B\log x) + \sum_{\ell \leq B\log x}\P(c(\pi_x) = \ell,\ \ord(\pi_x) \mid m).
    \end{equation}
    The first summand is at most $O(x^{-2})$, and the other terms apart from the one corresponding to $\ell = 1$ can be upper bounded using Corollary \ref{cor:cycle_divisors}:
    \begin{equation}\label{eq:except_divisors}
        \P(c(\pi_x) = \ell,\ \ord(\pi_x) \mid m) \leq \frac{1}{x}\Bigl(\frac{eh(m)}{\ell-1}\Bigr)^{\ell-1}.
    \end{equation}
    Note that we have 
    \[
        \log\log m \leq \log\log M \leq (1+o(1))\log\log n,
    \]
    so if $m$ is large enough, it follows from Corollary \ref{cor:reciprocal_divisors} that
    \[
        h(m) \leq \eta C_3\log\log n \leq \eta\log x,
    \]
    where $\eta \in (e^{\gamma}/C_3, 1)$ is any fixed constant. If $\ell = 1$, the left-hand side of \eqref{eq:except_divisors} is at most $1/x$, whereas if $\ell > 1$, then writing $\ell-1 = \kappa\log x$ for some $0 < \kappa < B$, we obtain that it is at most
    \[
        f(\kappa) \vcentcolon= \frac{1}{x}\Bigl(\frac{e\eta}{\kappa}\Bigr)^{\kappa\log x}.
    \]
    A simple application of calculus shows that this function has a maximum at $\kappa = \eta$, so we get a bound of the form $f(\eta) = x^{\eta-1}$. In conclusion, we have that \eqref{eq:except_decomp} is at most $O(x^{(\eta-1)/2})$, as desired.
\end{proof}

\begin{remark}
\label{rem:collision_prob}
For most applications of Corollary \ref{cor:reciprocal_divisors}, the statement that $h(m) \ll \log\log m$ is sufficient. The second part of the above proof is exactly the point where the value of the implied constant is important.
\end{remark}

Our next lemma bounds the number of divisors in intervals of the form $(n-X_{j-1}, n-X_j]$. With Lemmas \ref{lm:divisor_bound}, \ref{lm:div_in_int_simple} and \ref{lm:div_in_int_hard} in place, the proof is mostly a technical calculation.

\begin{lemma}
\label{lm:interval_bounds}
For all $j \in [3]$ we have
\[
    \max_{m\leq M}\tau(m;n-X_{j-1},n-X_j) = O(X_j^{c'/2}),
\]
where $c' > 0$ is the constant from the statement of Lemma \ref{lm:collision_prob}.
\end{lemma}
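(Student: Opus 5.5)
The plan is to treat the three values of $j$ separately. In each case I would pick whichever of Lemmas \ref{lm:divisor_bound}, \ref{lm:div_in_int_hard} and \ref{lm:div_in_int_simple} fits the length of the interval. Throughout, $n$ is assumed sufficiently large; small $n$ is absorbed into the $O$. I take the constants in the order $C$ first (fixed by Lemma \ref{lm:large_orders}), then $C_1$ large in terms of $C$ and $c'$, then $C_2$ large in terms of $C_1$, $c'$ and the constant $A$ of Lemma \ref{lm:div_in_int_hard}. The constant $C_3 > e^\gamma$ is already fixed. The computations use
\[
    \log M = \frac{C\log n\,\log\log n}{\log\log\log n},\qquad \log\log M = (1+o(1))\log\log n .
\]

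\emph{Case $j=1$.} Here the interval is $(0,n-X_1]$, so I would simply bound the total number of divisors. By Lemma \ref{lm:divisor_bound}, every $m\le M$ satisfies
\[
    \tau(m)\le \exp\Bigl(O\Bigl(\frac{\log M}{\log\log M}\Bigr)\Bigr)=\exp\Bigl(O\Bigl(\frac{C\log n}{\log\log\log n}\Bigr)\Bigr).
\]
Choosing $C_1$ large in terms of $C$ and $c'$ makes this at most $X_1^{c'/2}$.

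\emph{Case $j=2$.} The interval has length $t\le X_1$, and I would apply Lemma \ref{lm:div_in_int_hard}. Since $\log X_1/\log\log X_1 \asymp C_1\log n/(\log\log n\,\log\log\log n)$, we get $\ell \ll AC_1\log n/(\log\log n\,\log\log\log n)$. The map $\ell\mapsto (A\log M/\ell)^\ell$ is increasing for $\ell\le A\log M/e$, so the case $\ell=\omega(m)$ is dominated by the case $\ell=\lceil A\log t/\log\log t\rceil$, and I may take the latter value. With it, $A\log M/\ell \asymp (\log\log n)^2$ up to constants, so $\log(A\log M/\ell)\le (2+o(1))\log\log\log n$. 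This gives
\[
    \tau(m;n-X_1,n-X_2)\le \exp\Bigl(O\Bigl(\frac{AC_1\log n}{\log\log n}\Bigr)\Bigr),
\]
which is at most $X_2^{c'/2}=\exp\bigl(\tfrac{c'C_2}{2}\log n/\log\log n\bigr)$ once $C_2$ is large relative to $AC_1/c'$. The factor $\log\log\log n$ gained in $\ell$ exactly cancels the loss in the logarithm; this is why $X_1$ is chosen as it is.

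\emph{Case $j=3$.} The interval is $(a,a+t]$ with $a=n-X_2\ge n/2$ and $t\le X_2$, and I would apply Lemma \ref{lm:div_in_int_simple} with $r=\lceil 4C\log\log n/\log\log\log n\rceil$. We need $\log M + r^2\log t\le r\log a$. On the one hand, $r\log a\ge (\tfrac12+o(1))\,r\log n\ge 2\log M$. On the other hand,
\[
    r^2\log t\ll \frac{C^2C_2\log n\,\log\log n}{(\log\log\log n)^2},
\]
which is $o(r\log n)$. Hence the condition holds for large $n$, and the count is below $r\ll\log\log n\le (\log n)^{C_3c'/2}=X_3^{c'/2}$.

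The main obstacle is the bookkeeping in case $j=2$. There the losses are only a $\log\log\log n$-sized factor in the exponent, so one must check that the power of $\log\log n$ in $\log M/\ell$ contributes only a constant multiple of $\log\log\log n$. One must also check that the monotonicity in $\ell$ handles the branch $\ell=\omega(m)$. Cases $j=1$ and $j=3$ are routine once the parameters are substituted.
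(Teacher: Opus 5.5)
Your proposal is correct and follows the paper's proof essentially step for step: Lemma \ref{lm:divisor_bound} for $j=1$, Lemma \ref{lm:div_in_int_hard} together with the same monotonicity-in-$\ell$ argument for $j=2$, and Lemma \ref{lm:div_in_int_simple} for $j=3$, with the constants chosen in the same order. The only deviation is in the last case, where you take $r=\lceil 4C\log\log n/\log\log\log n\rceil$ instead of the paper's $r=\lfloor \log\log n/(2C_2)\rfloor$; both choices satisfy $Mt^{r^2}\le a^r$ and $r\le X_3^{c'/2}$ for large $n$.
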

\begin{proof}
    Suppose that $m \leq M$, and assume without loss of generality that $n$ is sufficiently large. We consider the three cases in turn. If $j = 1$, then for some absolute constant $C' > 0$, by Lemma \ref{lm:divisor_bound} we have
    \[
        \tau(m;n-X_0,n-X_1) \leq \tau(m) \leq \exp\Bigl(\frac{C'\log M}{\log\log M}\Bigr) \leq \exp\Bigl(\frac{CC'\log n}{\log\log\log n}\Bigr) \leq X_1^{c'/2}
    \]
    provided $C_1$ is large enough, namely $C_1 \geq 2CC'/c'$. If $j = 2$, then setting $a = n-X_1$, $t = X_1-X_2$ in Lemma \ref{lm:div_in_int_hard}, we obtain that
    \begin{equation}\label{eq:divisor_bound}
        \tau(m;n-X_1,n-X_2) \leq \Bigl(\frac{A\log M}{\ell}\Bigr)^{\ell},
    \end{equation}
    where $\ell \vcentcolon= \min(\lceil A\log t/\log\log t \rceil, \omega(m))$. Note that for $n$ large enough, we have
    \[
        \lceil A\log t/\log\log t \rceil \leq \frac{2A\log X_1}{\log\log X_1} \leq \frac{4AC_1\log n}{\log\log n\log\log\log n} =\vcentcolon T.
    \]
    Since the function $x \mapsto (A\log M/x)^x$ is increasing on $(0, T]$, it follows that the right-hand side in \eqref{eq:divisor_bound} is at most
    \[
        \Biggl(\frac{\frac{AC\log n\log\log n}{\log\log\log n}}{T}\Biggr)^T \leq \exp\Bigl(\frac{10AC_1\log n}{\log\log n}\Bigr) \leq X_2^{c'/2}
    \]
    provided $C_2$ is chosen so that $C_2 \geq 20AC_1/c'$. Finally, we deal with the remaining case $j = 3$ by appealing to Lemma \ref{lm:div_in_int_simple}. Taking $a = n-X_2$, $t = X_2-X_3$ and $r = \lfloor (\log\log n)/(2C_2)\rfloor$, we have
    \[
        a^r/t^{r^2} > \Bigl(\frac{n-X_2}{X_2^r}\Bigr)^r > (n^{1/4})^{\frac{\log\log n}{4C_2}} > m,
    \]
    whence Lemma \ref{lm:div_in_int_simple} implies that
    \[
        \tau(m;n-X_2,n-X_3) < r \leq \frac{\log\log n}{2C_2} \leq X_3^{c'/2}.
    \]
    This concludes the proof.
\end{proof}

\begin{remark}
\label{rem:interval_bounds}
Numbers up to $M$ can have as many as $X_2' = \exp\bigl(\Omega(\frac{\log n\log\log\log n}{\log\log n})\bigr)$ divisors in $(0, n]$, which is just too many for Lemma \ref{lm:div_in_int_simple} to be applicable to the interval ${(n-X_2',n-X_3]}$. It therefore seems difficult to avoid considering the interval $[X_2,X_1)$ in our arguments. Furthermore, Lemma \ref{lm:div_in_int_simple} produces bounds on $\tau(m;n-X_1,n-X_2)$ only for $m \leq n^{\Omega(\log\log\log n)}$, which is why we need Lemma \ref{lm:div_in_int_hard} to deal with this interval.
\end{remark}

The following lemma casts the defining property of exceptional numbers \eqref{eq:except_defn} into a form which is slightly more convenient to work with.

\begin{lemma}
\label{lm:except_equiv}
If $x$ is exceptional, then
\begin{equation}\label{eq:rec_prime_div}
    \sum_{\substack{p \mid n-x\\p \leq x}}\frac{1}{p} \geq \log\log x - \log\log\log x - O(1)
\end{equation}
and hence
\begin{equation}\label{eq:rec_prime_not_div}
    \sum_{\substack{p \nmid n-x\\p \leq x}}\frac{1}{p} \leq \log\log\log x + O(1).
\end{equation}
\end{lemma}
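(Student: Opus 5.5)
Set $g \vcentcolon= \gcd(\lcm(1,\ldots,x), n-x)$. The plan is to bound $h(g)$ from above in terms of the primes $p \le x$ dividing $n-x$, then read off \eqref{eq:rec_prime_div} from the exceptionality condition \eqref{eq:except_defn}, and finally deduce \eqref{eq:rec_prime_not_div} from Mertens' second theorem.

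\textbf{Step 1: an upper bound for $h(g)$.} Every prime factor of $g$ is at most $x$ and divides $n-x$. Since $h$ is multiplicative, we can write
\[
    h(g) = \prod_{p^\alpha \parallel g}\Bigl(1+\frac1p+\cdots+\frac1{p^\alpha}\Bigr) \le \prod_{\substack{p\mid n-x\\ p\le x}}\Bigl(1-\frac1p\Bigr)^{-1}.
\]
Split this product into two parts: $p \le y$ and $y < p \le x$, where $y$ is a threshold to be chosen.

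For $p > y$ we use $-\log(1-1/p) = 1/p + O(1/p^2)$. This bounds the tail product by $\exp\bigl(\sum_{p\mid n-x,\,y<p\le x}1/p + O(1/y)\bigr)$.

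For $p \le y$ we bound the product trivially by $\prod_{p\le y}(1-1/p)^{-1} \ll \log y$, which is Mertens' third theorem. This also follows from Lemma \ref{lm:mertens_second} together with convergence of $\sum 1/p^2$.

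\textbf{Step 2: choosing $y$.} Combining the two parts gives
\[
    \log h(g) \le \sum_{\substack{p\mid n-x\\ p\le x}}\frac1p + \log\log y + O(1).
\]
The difficulty is that taking $y$ large costs $\log\log y$. So we take $y$ bounded, say $y=2$, and then simply have
\[
    \log h(g) \le \sum_{p\mid n-x,\,p\le x}\frac1p + O(1),
\]
because $\sum_p 1/p^2$ converges.

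\textbf{Step 3: deducing \eqref{eq:rec_prime_div}.} Exceptionality gives $\log h(g) \ge \log\log x - \log\log\log x + \log c$. Combining this with Step 2 yields \eqref{eq:rec_prime_div} immediately, with the constant $\log c$ absorbed into the $O(1)$.

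\textbf{Step 4: deducing \eqref{eq:rec_prime_not_div}.} By Lemma \ref{lm:mertens_second}, $\sum_{p\le x}1/p = \log\log x + O(1)$. Subtracting \eqref{eq:rec_prime_div} from this gives \eqref{eq:rec_prime_not_div}.

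The only delicate point is making sure the contribution of prime powers in $g$ does not exceed $O(1)$ in $\log h$. This is handled by the bound $(1-1/p)^{-1}$ and the convergence of $\sum_p 1/p^2$, so no real obstacle is expected.
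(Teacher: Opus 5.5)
Your proposal is correct and follows essentially the same route as the paper: use multiplicativity of $h$, take logarithms, and show that everything beyond $\sum_{p\mid n-x,\,p\le x}1/p$ contributes only $O(1)$, then apply Lemma~\ref{lm:mertens_second}. The paper does this via $\log(1+t)\le t$ and the convergence of $\sum 1/q'$ over proper prime powers $q'$, while you bound each local factor by $(1-1/p)^{-1}$ and use $-\log(1-1/p)=1/p+O(1/p^2)$, which is equivalent. The split at a threshold $y$ is unnecessary, since with $y=2$ it just amounts to applying that expansion to every prime.
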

\begin{proof}
    Set $m \vcentcolon= \gcd(\lcm(1,\ldots,x),n-x)$ and let $m = \prod_{i=1}^{k}p_i^{\alpha_i}$ be the prime factorisation of $m$. Using the multiplicativity of $h$ and the inequality $\log(1+t) \leq t$, we obtain that
    \[
        \log h(m) = \log\Biggl(\prod_{i=1}^{k}h(p_i^{\alpha_i})\Biggr) = \sum_{i=1}^{k}\log\Bigl(1+\sum_{j=1}^{\alpha_i}\frac{1}{p_i^j}\Bigr) \leq \sum_{i=1}^{k}\sum_{j=1}^{\alpha_i}\frac{1}{p_i^j}.
    \]
    Therefore, if $x$ is exceptional, then taking logarithms in \eqref{eq:except_defn} gives
    \[
        \sum_{\substack{q' \mid n-x\\q' \leq x}}\frac{1}{q'} \geq \log\log x - \log\log\log x - O(1),
    \]
    where the summation ranges over prime powers. Since the sum of reciprocal proper prime powers converges, we obtain \eqref{eq:rec_prime_div}, and \eqref{eq:rec_prime_not_div} then follows from Lemma \ref{lm:mertens_second}.
\end{proof}

Our final lemma says that the exceptional numbers grow very fast, and hence there cannot be many of them.

\begin{lemma}
\label{lm:except_bounds}
For any $x, x' \in E_n$ such that $x < x'$, we have $x \leq \exp\bigl(O((\log\log x')^3)\bigr)$. In particular,
\[
    |E_n| \leq \log^*n + O(1).
\]
\end{lemma}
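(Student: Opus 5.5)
Given $x<x'$ both exceptional, set $y := x'-x$, so $0<y<x'$. Lemma \ref{lm:except_equiv} applied to $x'$ gives \eqref{eq:rec_prime_not_div}: the reciprocal sum over primes $p\le x'$ with $p\nmid n-x'$ is at most $\log\log\log x'+O(1)$. Applied to $x$, it gives \eqref{eq:rec_prime_div}: the reciprocal sum over primes $p\le x$ with $p\mid n-x$ is at least $\log\log x-\log\log\log x-O(1)$. Since $n-x = (n-x')+y$, any prime $p\le x$ dividing both $n-x'$ and $n-x$ also divides $y$. Each prime $p\le x$ dividing $n-x$ therefore either fails to divide $n-x'$ (note $p\le x<x'$), or divides $y$. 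This gives
\[
    \log\log x - \log\log\log x - O(1) \le \sum_{\substack{p\le x'\\ p\nmid n-x'}}\frac1p + \sum_{p\mid y}\frac1p \le \log\log\log x' + \log\log\log y + O(1),
\]
where the last sum is bounded by Corollary \ref{cor:rec_prime_div} applied to $y$.

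Since $y<x'$, this yields $\log\log x \le \log\log\log x + 2\log\log\log x' + O(1)$. If $x\le x'$ it follows that $\log\log x \le 3\log\log\log x' + O(1)$, hence $\log x \ll (\log\log x')^3$, which is the first claim. The loss of $\log\log\log x$ on the left is harmless because $x<x'$. The main obstacle is making sure the bound on $\sum_{p\mid y}1/p$ is uniform: Corollary \ref{cor:rec_prime_div} needs $y$ large enough for the triple logarithm to make sense. I would handle small $y$ with the trivial bound $O(1)$ and absorb it into the constant. Small $x$ also needs care, since Lemma \ref{lm:except_equiv} requires $x$ large; I would treat bounded $x$ separately, as they contribute only $O(1)$ elements.

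For the counting statement, list $E_n$ as $x_1<x_2<\dots<x_k<n$. The first claim gives $\log x_i \le A(\log\log x_{i+1})^3$ for some absolute constant $A$. This map is eventually much smaller than $\log$: once $x_{i+1}$ exceeds a fixed constant, $\log\log\log x_i \le \log\log\log\log x_{i+1}+O(1)$ roughly, so each step loses more than one iterated logarithm. Concretely, I would show that for large $t$, $\exp(A(\log\log t)^3) \le \log t$. Then $x_i \le \log x_{i+1}$ whenever $x_{i+1}$ is beyond an absolute constant $T_0$. Iterating from $x_k<n$ gives $x_{k-j}\le \log^{(j)} n$ as long as the values stay above $T_0$. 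Hence the number of elements of $E_n$ exceeding $T_0$ is at most $\log^* n+O(1)$, and the elements at most $T_0$ number $O(1)$. Together these give $|E_n|\le \log^*n+O(1)$.
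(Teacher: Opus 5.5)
Your proof of the first claim is correct and is essentially the paper's argument. The paper bounds the reciprocal sum over primes dividing $\gcd(n-x,n-x')$, which divides $x'-x$. You bound the sum over primes dividing $y=x'-x$ directly, which amounts to the same thing.

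The gap is in the counting step. The inequality you plan to prove, $\exp(A(\log\log t)^3)\le\log t$ for large $t$, is false. With $s=\log\log t$ it reads $e^{As^3}\le e^{s}$, which fails for every $s>A^{-1/2}$, i.e.\ for all large $t$. Consequently $x_i\le\log x_{i+1}$ does not follow from the first claim, and neither does $x_{k-j}\le\log^{(j)}n$. The first claim permits, for instance, $x_{k-1}$ as large as $\exp(A(\log\log n)^3)$, which is far larger than $\log n$.

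What the first claim actually gives per step is $\log^{(3)}x_i\le\log^{(4)}x_{i+1}+B$. That is exactly one iterated logarithm (not ``more than one''), but only up to an additive constant on the triple-logarithmic scale, which is a large loss on the original scale. The real content of the counting statement is that these additive constants do not accumulate over $\log^*n$ steps. Naively they add up to $jB$ after $j$ steps. Composing two steps of your map does land below $\log t$, but that would only give $|E_n|\le 2\log^*n+O(1)$.

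The paper handles this with an induction. As long as $\log^{(3)}x_{k-j}\ge 2B+2$, one has $\log^{(3)}x_{k-j}\le\log^{(j+3)}n+2B$. This holds because $\log(y+2B)\le\log y+2B/y\le\log y+B$ for $y\ge2$, so the error stays bounded by $2B$ instead of growing.

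Equivalently, you can repair your argument by comparing with $\log$ in shifted coordinates:
\begin{itemize}
\item Put $w_i=\log^{(3)}x_i-K$ for a constant $K$ large in terms of $B$.
\item Then $w_{i+1}\ge1$ implies $w_i\le\log(w_{i+1}+K)+B-K\le\log w_{i+1}$, since $\log(1+K)\le K-B$.
\item Iterating from $w_k\le\log^{(3)}n$ gives $w_{k-j}\le\log^{(j+3)}n$, so at most $\log^*n+O(1)$ elements of $E_n$ have $w_i\ge1$.
\item The remaining elements satisfy $x_i<\exp\exp\exp(K+1)$, so there are $O(1)$ of them.
\end{itemize}
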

\begin{proof}
    From \eqref{eq:rec_prime_div} and \eqref{eq:rec_prime_not_div}, the latter with $x'$ in place of $x$, we get
    \[
        \sum_{\substack{p\mid\gcd(n-x,n-x')\\p \leq x}}\frac{1}{p} \geq \sum_{\substack{p\mid n-x\\p \leq x}}\frac{1}{p} - \sum_{\substack{p\nmid n-x'\\p \leq x}}\frac{1}{p} \geq \log\log x - \log\log\log x - \log\log\log x' - O(1).
    \]
    Combining this with Corollary \ref{cor:rec_prime_div} and using
    \[
        \gcd(n-x,n-x') = \gcd(n-x, x'-x) \leq x'-x < x',
    \]
    we arrive at the conclusion that
    \[
        2\log\log\log x' \geq \log\log x - \log\log\log x - O(1).
    \]
    Upon rearranging and exponentiating twice, we obtain that $x \leq \exp\bigl(O((\log\log x')^3)\bigr)$. If we instead take logarithms, we get
    \begin{equation}\label{eq:logs}
        \log\log\log x \leq \log\log\log\log x' + O(1).
    \end{equation}
    The conclusion about the size of $E_n$ now follows by iteration. Indeed, let $E_n = \{x_0, x_1,\ldots,x_{r-1}\}$, where $x_0 > x_1 > \ldots > x_{r-1}$. Let $s \in \{0,1,\ldots,r-1\}$ be minimal such that $\log\log\log x_s < 2B+2$, where $B$ is the implied constant in \eqref{eq:logs}; set $s = r$ if such an index does not exist. Writing $D \vcentcolon= \exp(\exp(\exp(2B+2)))$, we have $\{x_s,\ldots,x_{r-1}\} \subseteq [0, D\bigr)$, so it follows that $r-s \leq D$. If $s = 0$, we are done, so we may assume that $s > 0$. We will show by induction on $j$ that $\log\log\log x_j \leq \log^{(j+3)}x_0 + 2B$ and $\log^{(j+3)}x_0 \geq 2$ for all $0 \leq j < s$. Note that the second statement follows from the first and the fact that $\log\log\log x_j \geq 2B+2$, so it remains to prove the first statement. If $j = 0$, this is clear, and if $j \geq 1$, then by \eqref{eq:logs} with $x = x_j$, $x' = x_{j-1}$ together with the induction hypothesis,
    \[
        \log\log\log x_j \leq \log\log\log\log x_{j-1} + B \leq \log(\log^{(j+2)}x_0+2B) + B.
    \]
    Hence, using the inequality $\log(1+t) \leq t$, we get
    \[
        \log\log\log x_j - \log^{(j+3)}x_0 \leq \log\Bigl(1+\frac{2B}{\log^{(j+2)}x_0}\Bigr) + B \leq \frac{2B}{2} + B = 2B,
    \]
    which completes the induction step. Finally, $\log^{(s+2)}x_0 \geq 2$ implies that $\log^*x_0 \geq s+3$, and hence $r \leq \log^*n + D$, as desired.
\end{proof}

We are now in a position to prove Theorem \ref{thm:except_collision}. This is more or less a matter of putting together all the ingredients we have developed so far. We will, however, require a further observation. Specifically, in the case when the Markov chain $(Z_j^{(n)})_{j\geq0}$ jumps straight away into the interval $[X_4,X_3)$, Landau's theorem allows us to restrict attention to orders not much larger than $n$. By Lemma \ref{lm:div_in_int_simple}, this in turn means that the first step is uniquely determined by the order.

\begin{proof}[Proof of Theorem \ref{thm:except_collision}]
    By the triangle inequality and the fact that $0 \leq \wt{p}_n^{(4)} \leq \wt{p}_n$,
    \begin{equation}\label{eq:triangle_ineq}
        \lVert \wt{p}_n^{(4)}1_{(0,M]}\rVert_q \leq \lVert \wt{p}_n1_{(0, M]}\rVert_q \leq \sum_{j=1}^{4}\lVert \wt{p}_n^{(j)}1_{(0, M]}\rVert_q.
    \end{equation}
    Now if $j \in [3]$, then by Lemma \ref{lm:cauchy_schwarz} with $N = M$, Lemma \ref{lm:collision_prob} and Lemma \ref{lm:interval_bounds}, we have
    \begin{equation}\label{eq:power_saving}
        \lVert\wt{p}_n^{(j)}1_{(0, M]}\rVert_q^q \ll X_j^{c'/2}\Biggl(\sum_{X_{j}\leq x<X_{j-1}}\frac{1}{x^{1+c'}} + |E_n\cap [X_{j},X_{j-1})|\cdot\frac{1}{X_{j}^{c'}}\Biggr) \ll X_{j}^{c'/2}\cdot\frac{1}{X_{j}^{c'}} = X_{j}^{-c'/2},
    \end{equation}
    where we used that, by Lemma \ref{lm:except_bounds}, there are at most two exceptional values of $x$ in the interval $[X_{j},X_{j-1})$. On the other hand, in the case $j = 4$, Landau's theorem \cite{landau} implies that for $x \in [X_4,X_3)$ we have 
    \[
        \ord(\pi_x) \leq \exp((1+o(1))\sqrt{x\log x}) \leq \exp\Bigl((\log n)^{(C_3+2)/4}\Bigr) = n^{o(1)}
    \]
    provided $x$ is large enough. Therefore, for sufficiently large $n$ we have
    \[
        \lcm(\ord(\pi_x),n-x) \leq \ord(\pi_x)\cdot n \leq n^{1+o(1)},
    \]
    so it follows from the definition \eqref{eq:recursion_range} that $\wt{p}_n^{(4)}(m) = 0$ for $m > n^{3/2}$ say. We conclude that 
    \begin{equation}\label{eq:bound_reduction}
        \lVert\wt{p}_n^{(4)}1_{(0, M]}\rVert_q = \lVert\wt{p}_n^{(4)}1_{(0, N]}\rVert_q
    \end{equation}
    with $N \vcentcolon= n^{3/2}$. Hence, taking $a = n-X_3$, $t = X_3$ and $r = 2$ in Lemma \ref{lm:div_in_int_simple}, for $m \leq N$ we have
    \[
        a^r/t^{r^2} = \frac{(n-X_3)^2}{X_3^4} \geq \frac{n^2}{4(\log n)^{4C_3}} > m,
    \]
    whence $\tau(m;n-X_3,n-X_4) \leq 1$. Therefore, by Lemma \ref{lm:cauchy_schwarz}, we have
    \[
        \lVert\wt{p}_n^{(4)}1_{(0, N]}\rVert_q^q = \sum_{0\leq x < X_3}\sum_{m \leq N}\P(\lcm(\ord(\pi_x),n-x) = m)^q.
    \]
    Combining this with Lemma \ref{lm:collision_prob}, we obtain
    \begin{equation}\label{eq:final_interval}
          \lVert\wt{p}_n^{(4)}1_{(0, N]}\rVert_q^q = \sum_{0\leq x < X}\sum_{m \leq N}\P(\lcm(\ord(\pi_x),n-x) = m)^q + R \leq |E_n| + O(1),
    \end{equation}
    where $R$ is as in the statement of the theorem. Finally, putting together \eqref{eq:triangle_ineq}, \eqref{eq:power_saving} and \eqref{eq:bound_reduction}, it follows that
    \[
        \lVert \wt{p}_n1_{(0, M]}\rVert_q = \lVert \wt{p}_n^{(4)}1_{(0, N]}\rVert_q + O((\log n)^{-\Omega(1)}).
    \]
    Taking $q$-th powers and and using \eqref{eq:final_interval} together with Lemma \ref{lm:except_bounds} and the mean value inequality, we obtain
    \[
        \lVert \wt{p}_n1_{(0, M]}\rVert_q^q = \sum_{0\leq x < X}\sum_{m \leq N}\P(\lcm(\ord(\pi_x),n-x) = m)^q + R + O((\log n)^{-\Omega(1)}).
    \]
    The conclusion now follows from Lemma \ref{lm:large_orders}.
\end{proof}

Finally, we deduce Theorems \ref{thm:collision_prob_upper_bound}, \ref{thm:collision_prob_explicit} and \ref{thm:collision_prob_average} from Theorem \ref{thm:except_collision}.

\begin{proof}[Proof of Theorem \ref{thm:collision_prob_upper_bound}]
    The conclusion is immediate from Theorem \ref{thm:except_collision} and Lemma \ref{lm:except_bounds}.
\end{proof}

\begin{proof}[Proof of Theorem \ref{thm:collision_prob_explicit}]
    Assuming $k$ is sufficiently large and taking $n = n_k$, ${X = \min(k,k+D)}$ in Theorem \ref{thm:except_collision}, it suffices to show that for $0 \leq x < X$, $\gcd(\ord(\pi_x),n-x) = \gcd(\ord(\pi_x),x-D)$, whereas for $x \geq X$, $x \not\in E_n$. The former is equivalent to proving that, for all primes $p$,
    \begin{equation}\label{eq:p_adic_equal}
        \min\bigl(\nu_p(\ord(\pi_x)), \nu_p(n-x)\bigr) = \min\bigl(\nu_p(\ord(\pi_x)), \nu_p(x-D)\bigr).
    \end{equation}
    Note that $\ord(\pi_x)$ divides $\lcm(1,\ldots,x)$, which in turn divides $n-D$. Therefore, we certainly have $\nu_p(\ord(\pi_x)) \leq \nu_p(n-D)$. In particular, if $x = D$ or $\nu_p(x-D) = \nu_p(n-D)$, then \eqref{eq:p_adic_equal} holds since both sides are equal to $\nu_p(\ord(\pi_x))$. Otherwise, since $|x-D| \leq k$, it follows that $x-D$ divides $n-D$, so $\nu_p(x-D) < \nu_p(n-D)$. Therefore, we have $\nu_p(n-x) = \nu_p(x-D)$ and so \eqref{eq:p_adic_equal} again holds.
    
    Turning to the latter statement, suppose to the contrary that there exists $x \geq X$ with $x \in E_n$. For any prime $p \mid n-x$ such that $p \leq k$ we have $p \mid n - D$ and hence $p \mid x-D$. Thus, by Corollary \ref{cor:rec_prime_div},
    \[
        \sum_{\substack{p \mid n-x\\p \leq k}}\frac{1}{p} \leq \sum_{p \mid x-D}\frac{1}{p} \leq \log\log\log x + O(1),
    \]
    whence Lemma \ref{lm:except_equiv} implies that
    \[
        \sum_{\substack{p \mid n-x\\k < p \leq x}}\frac{1}{p} \geq \log\log x - 2\log\log\log x - O(1).
    \]
    The left-hand side, however, is at most $\omega(n-x)/k$. We have $k \gg \log n/\log\log n$, and by Lemma \ref{lm:prime_divisor_bound}, $\omega(n-x) \ll \log n/\log\log n$. We thus conclude that
    \[
        \log\log x - 2\log\log\log x \ll 1,
    \]
    whence $x \ll 1$. This is a contradiction for $k$ large enough, so we are done.
\end{proof}

\begin{remark}
\label{rem:collision_prob_explicit}
The above proof in fact shows that the conclusion of Theorem \ref{thm:collision_prob_explicit} extends to any sequence $(n_k)_{k\geq1}$ satisfying $n_k \equiv D \pmod {\lcm(1,\ldots,k)}$ and tending to infinity not too fast, namely 
\[
    n_k \leq \exp\Bigl(\exp\Bigl(\Omega\Bigl(\Bigl(\frac{\log k}{\log\log k}\Bigr)^2\Bigr)\Bigr)\Bigr).
\]
\end{remark}

\begin{proof}[Proof of Theorem \ref{thm:collision_prob_average}]
    By Theorem \ref{thm:except_collision}, it suffices to show that the size of $E_n$ is $O(1)$ on average. To this end, note that
    \[
        \frac{1}{N}\sum_{n=1}^{N}|E_n| = \frac{1}{N}\sum_{0\leq x < N}|\{n \in [N] \mid x \in E_n\}|.
    \]
    Let us fix a large enough $x \in \{0,1,\ldots,N-1\}$ and estimate the proportion of $n \in [N]$ such that $x \in E_n$. Note that if $x \in E_n$, then by Corollary \ref{cor:reciprocal_divisors}, $L \vcentcolon= \lcm(1,\ldots,x)$ has a divisor $d \geq T$ such that $d \mid n-x$, where
    \[
        T \vcentcolon= \exp\Bigl(\exp\Bigl(\frac{c''\log x}{\log\log x}\Bigr)\Bigr)
    \]
    and $c'' > 0$ is an absolute constant. Fixing such a divisor $d$, the proportion of $n \in (x,N]$ such that $d \mid n-x$ is at most $1/d$. Hence, by the union bound, we have
    \[
        \frac{1}{N}|\{n \in [N] \mid x \in E_n\}| \leq \sum_{\substack{d \mid L\\d \geq T}}\frac{1}{d}.
    \]
    We estimate this sum using a variant of Rankin's trick in the style of \cite[\S3.10]{granville-smooth}. Let $\sigma \in (0,\frac{1}{2})$ be a small parameter, to be determined later. Then
    \[
        \sum_{\substack{d \mid L\\d \geq T}}\frac{1}{d} \leq \sum_{d \mid L}\frac{1}{d}\Bigl(\frac{d}{T}\Bigr)^{\sigma} = T^{-\sigma}\sum_{d\mid L}d^{\sigma-1} \leq T^{-\sigma}\exp\Bigl(O\Bigl(\sum_{p\leq x}p^{\sigma-1}\Bigr)\Bigr).
    \]
    A standard argument using partial summation and Lemma \ref{lm:chebyshev} shows that
    \[
        \sum_{p\leq x}p^{\sigma-1} \ll \frac{x^{\sigma}}{\sigma\log x}
    \]
    as long as say $\sigma \geq 2\log\log x/\log x$. Hence, choosing $\sigma = c''/\log\log x$, we obtain
    \[
        \frac{1}{N}|\{n \in [N] \mid x \in E_n\}| \leq T^{-\frac{c''}{\log\log x} + O(\frac{\log\log x}{\log x})} = \exp\Bigl(-\exp\Bigl(\Omega\Bigl(\frac{\log x}{\log\log x}\Bigr)\Bigr)\Bigr).
    \]
    This is a summable function of $x$, so the desired conclusion follows.
\end{proof}

\section{Shannon entropy}\label{sec:shannon_entropy}

In this section, we prove Theorem \ref{thm:shannon_entropy}. We start by recalling the expression for Shannon entropy:
\[
    H_1(\ord(\pi_n)) = \sum_{m\in\N}p_n(m)\log\Bigl(\frac{1}{p_n(m)}\Bigr).
\]
To prove the lower bound, we consider the contribution of the orders that are not unusually small and do not have an unusually large number of prime factors. Specifically, fixing $\varepsilon > 0$, these orders come from the following set:
\[
    E \vcentcolon= \Bigl\{m \in \N\ \Big|\ m \geq \exp\Bigl(\Bigl(\frac{1}{2}-\varepsilon\Bigr)\log^2n\Bigr),\ \omega(m) \leq 2\log n\log\log n\Bigr\}.
\]
By the main result of \cite{erdos-turan-groups-i},
\[
    \P\Bigl(\ord(\pi_n) \geq \exp\Bigl(\Bigl(\frac{1}{2}-\varepsilon\Bigr)\log^2n\Bigr)\Bigr) \to 1 \text{ as } n \to \infty.
\]
Furthermore, by \cite[pp.\ 152]{erdos-turan-groups-ii},
\[
    \P\bigl(\omega(\ord(\pi_n)) \leq 2\log n\log\log n\bigr) \to 1 \text{ as } n \to \infty.
\]
Therefore, if $n$ is sufficiently large, we have
\begin{equation}\label{eq:lower_tail_prob}
    \sum_{m\in E}p_n(m) = \P(\ord(\pi_n) \in E) \geq 1-\varepsilon.
\end{equation}
To conclude the proof of the lower bound on $H_1(\ord(\pi_n))$, it remains to place a suitable upper bound on $p_n(m)$ for $m \in E$. To this end, we note that, for a suitably large constant $B > 0$,
\[
    p_n(m) \leq \P(c(\pi_n) > B\log^2n) + \sum_{\ell \leq B\log^2n}\P(c(\pi_n) = \ell,\ m \mid \ord(\pi_n)).
\]
Taking $B > 0$ sufficiently large, Corollary \ref{cor:tail_bounds} implies that the first term on the right-hand side is at most $O(\exp(-\log^2n))$. On the other hand, Lemma \ref{lm:cycles_and_primes} implies that, for $\ell \leq B\log^2n$,
\[
    \P(c(\pi_n) = \ell,\ m \mid \ord(\pi_n)) \leq \frac{(B\log^2n)^{2\log n\log\log n}}{\exp\Bigl(\Bigl(\frac{1}{2}-\varepsilon\Bigr)\log^2n\Bigr)} \leq \exp\Bigl(-\Bigl(\frac{1}{2}-2\varepsilon\Bigr)\log^2n\Bigr),
\]
where the last inequality holds provided $n$ is sufficiently large. Putting these estimates together, we obtain that, for $n$ large enough and $m \in E$,
\[
    p_n(m) \leq \exp\Bigl(-\Bigl(\frac{1}{2}-3\varepsilon\Bigr)\log^2n\Bigr).
\]
For sufficiently small $\varepsilon > 0$, combining this with \eqref{eq:lower_tail_prob} yields
\[
    H_1(\ord(\pi_n)) \geq (1-\varepsilon) \cdot \Bigl(\frac{1}{2}-3\varepsilon\Bigr)\log^2n \geq \Bigl(\frac{1}{2}-4\varepsilon\Bigr)\log^2n,
\]
as desired.

We now turn to the proof of the upper bound. Again, let $\varepsilon > 0$ be arbitrary but fixed. Choose $B$ as a value of $\lambda_2$ in Corollary \ref{cor:tail_bounds} for which $Q(\lambda_2/2) \geq 2$ and define the parameters
\[
    U \vcentcolon= \exp\Bigl(\Bigl(\frac{1}{2}+\varepsilon\Bigr)\log^2n\Bigr), \quad V \vcentcolon= \exp(B\log^2n), \quad \rho \vcentcolon= \exp(-2B\log^2n).
\]
We consider the decomposition
\[
    H_1(\ord(\pi_n)) = \sum_{j=1}^{4}T_j, \quad \text{where} \quad T_j \vcentcolon= \sum_{m\in F_j}p_n(m)\log\Bigl(\frac{1}{p_n(m)}\Bigr),
\]
and the sets $F_j$ for $j \in [4]$ are defined as
\[
    F_1 \vcentcolon= (V,\infty), \quad F_2 \vcentcolon= (0, U],
\]
\[
    F_3 = (U,V] \cap \{m \in \N \mid p_n(m) \leq \rho\}, \quad F_4 = (U, V] \cap \{m\in \N \mid p_n(m) > \rho\}.
\]
We estimate the sums $T_j$ for $j \in [4]$ in turn. First, we have
\[
    \sum_{m\in F_1}p_n(m) = \P(\ord(\pi_n) > V) \leq \P(c(\pi_n) > B\log n) = O(n^{-2})
\]
and also
\[
    \sum_{m \in F_3}p_n(m) \leq |F_3| \cdot \rho \leq \exp(-B\log^2n).
\]
For any $m \in \N$ with $p_n(m) > 0$, we have the crude estimate $p_n(m) \geq 1/n!$, so we obtain
\[
    T_1 + T_3 \leq \log(n!)\sum_{m\in F_1 \cup F_3}p_n(m) \leq n\log n \cdot (O(n^{-2}) + \exp(-B\log^2n)) = o(1).
\]
Next, letting $G$ denote the event that $\ord(\pi_n) \in F_2$ and defining for $m \in \N$ the conditional probabilities 
\[
    p_n'(m) \vcentcolon= \P(\ord(\pi_n) = m \mid G),
\] 
the entropy of $\ord(\pi_n)$ conditional on $G$ can be expressed as
\[
    \E\Bigl[\log\Bigl(\frac{1}{p_n'(\ord(\pi_n))}\Bigr)\ \Big|\ G\Bigl] = \E\Bigl[\log\Bigl(\frac{1}{p_n(\ord(\pi_n))}\Bigr)\ \Big|\ G\Bigl] + \log\bigl(\P(G)\bigr).
\]
Since the distribution of $\ord(\pi_n)$ conditional on $G$ is supported on $(0,U]$, the left-hand side in the above display is at most $\log U$. By the Erdős--Turán law \cite{erdos-turan-groups-i}, the second term on the right-hand side tends to zero as $n \to \infty$. The first term on the right-hand side is at least $T_2$, so it follows that
\[
    T_2 \leq \Bigl(\frac{1}{2}+\varepsilon\Bigr)\log^2n + o(1).
\]
Finally, we have
\[
    T_4 \leq \log(1/\rho)\sum_{m\in F_4}p_n(m) = 2B\log^2n \cdot \P\bigl(\ord(\pi_n) > U\bigr) \leq \varepsilon\log^2n,
\]
where the last inequality holds by the Erdős--Turán law \cite{erdos-turan-groups-i} provided $n$ is sufficiently large. In conclusion, for large enough $n$ we certainly have
\[
    H_1(\ord(\pi_n)) \leq \Bigl(\frac{1}{2}+3\varepsilon\Bigr)\log^2n,
\]
as desired. This concludes the proof of Theorem \ref{thm:shannon_entropy}.

\bibliographystyle{plain}
\bibliography{references}

\end{document}